\newtheorem{theorem}{Theorem}[section]
\newtheorem{lemma}[theorem]{Lemma}
\newtheorem{prop}[theorem]{Proposition}
\theoremstyle{definition}
\newtheorem{definition}[theorem]{Definition}
\newtheorem*{remark}{Remark}
\renewcommand{\mod}[1]{{\ifmmode\text{\rm\ (mod~$#1$)}\else\discretionary{}{}{\hbox{ }}\rm(mod~$#1$)\fi}}
\newcommand{\ep}{\varepsilon}
\newcommand{\gbinom}[2]{\genfrac{[}{]}{0pt}{}{#1}{#2}}
\newcommand{\ovomega}{{\overline\omega}}
\newcommand{\cov}{\mathop{\rm cov}\nolimits}
\newcommand{\li}{\mathop{\rm li}\nolimits}
\newcommand{\F}{{\mathbb F}}
\newcommand{\R}{{\mathbb R}}
\newcommand{\Z}{{\mathbb Z}}
\newcommand{\Znt}{{\Z_n^\times}}
\newcommand{\ba}{{\bm a}}
\newcommand{\bb}{{\bm b}}
\newcommand{\balpha}{{\bm\alpha}}
\newcommand{\bbeta}{{\bm\beta}}
\newcommand{\bgamma}{{\bm\gamma}}
\begin{document}

\title{The distribution of the number of subgroups of the multiplicative group}
\author[Greg Martin and Lee Troupe]{Greg Martin and Lee Troupe}
\address{Department of Mathematics \\ University of British Columbia \\ Room 121, 1984 Mathematics Road \\ Canada V6T 1Z2}
\email{gerg@math.ubc.ca}
\email{ltroupe@math.ubc.ca}
\subjclass[2010]{Primary 11N60, 11N45; secondary 11N37.}
\maketitle

\begin{abstract}
Let $I(n)$ denote the number of isomorphism classes of subgroups of $(\Z/n\Z)^\times$, and let $G(n)$ denote the number of subgroups of $(\Z/n\Z)^\times$ counted as sets (not up to isomorphism). We prove that both $\log G(n)$ and $\log I(n)$ satisfy Erd{\H o}s--Kac laws, in that suitable normalizations of them are normally distributed in the limit. Of note is that $\log G(n)$ is not an additive function but is closely related to the sum of squares of additive functions. We also establish the orders of magnitude of the maximal orders of $\log G(n)$ and $\log I(n)$.
\end{abstract}

\section{Introduction}

The distribution of values of additive functions has long been of interest to number theorists. Perhaps the most famous result in this area is the celebrated Erd{\H o}s--Kac theorem: if $\omega(n)$ and $\Omega(n)$ denote, respectively, the number of distinct prime factors of $n$ and the number of prime factors of $n$ counted with multiplicity, then the distributions of the values of both
\[
\frac{\omega(n) - \log\log n}{\sqrt{\log\log n}} \quad\text{and}\quad \frac{\Omega(n) - \log\log n}{\sqrt{\log\log n}}
\]
tend to the standard normal distribution. In other words, both $\omega(n)$ and $\Omega(n)$ are, in the limit, ``normally distributed with mean $\log\log n$ and variance $\log\log n$''. Indeed, Erd{\H o}s and Kac~\cite{ek40} established this property for a large class of additive functions, and many subsequent authors have widened even further the set of functions for which we know such Erd{\H o}s--Kac laws. In this paper, we establish Erd{\H o}s--Kac laws for two functions that count subgroups of a natural family of finite abelian groups, as we now describe.

Let $\Znt = (\Z/n\Z)^\times$ denote the multiplicative group of units modulo~$n$.
Let $G(n)$ denote the number of subgroups of $\Znt$, counted as sets (rather than up to isomorphism), so that $G(8) = 5$, for example. The function $G(n)$ is not a multiplicative function of $n$, but it does have the property that $G(n) = \prod_{p \mid \phi(n)} G_p(n)$ (as we shall see below), where $G_p(n)$ denotes the number of $p$-subgroups of $\Znt$. One could perhaps say that $G(n)$ is ``a multiplicative function of $\phi(n)$'', or simply ``$\phi$-multiplicative,'' making
\begin{align}\label{phi additive}
\log G(n) = \sum_{p \mid \phi(n)} \log G_p(n)
\end{align}
a ``$\phi$-additive'' function. Our primary aim is to show that $\log G(n)$ possesses enough structure to satisfy a similar Erd{\H o}s--Kac law:

\begin{theorem}\label{main theorem}
Define
\[
A_0 = \frac{1}{4} \sum_{p} \frac{p^2 \log p}{(p - 1)^3(p + 1)} \quad\text{and}\quad A = A_0 + \frac{\log2}2 \approx 0.72109
\]
and
\[
B = \frac14 \sum_p \frac{p^3(p^4 - p^3 - p^3 - p - 1)(\log p)^2}{(p - 1)^6(p + 1)^2(p^2 + p + 1)}, 
\]
and set $C = \frac{(\log 2)^2}{3} + 2 A_0 \log 2 + 4A_0^2 + B \approx 3.924$. (Both sums are taken over all primes~$p$.) Then for every real number $u$,
\begin{equation*}
\lim_{x \to \infty} \frac{1}{x} \#\big\{n \leq x \colon \log G(n) < A(\log\log n)^2 + u\cdot \sqrt C(\log\log n)^{3/2} \big\} = \frac{1}{\sqrt{2\pi}} \int_{-\infty}^u e^{-t^2/2} \, dt.
\end{equation*}
In other words, the quantity $\log G(n)$ is normally distributed, with mean $A (\log\log n)^2$ and variance $C(\log\log n)^3$.
\end{theorem}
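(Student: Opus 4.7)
The plan proceeds in three stages. First, I would work out an explicit description of $G_p(n)$ in terms of the abelian $p$-group structure of the $p$-Sylow subgroup of $\Znt$. Using the Chinese Remainder Theorem decomposition $\Znt \cong \prod_{q^a\|n}(\Z/q^a\Z)^\times$, together with the standard form of $(\Z/q^a\Z)^\times$ (cyclic for odd $q$, and $\Z/2 \times \Z/2^{a-2}$ for $q=2,\ a\ge3$), the $p$-Sylow of $\Znt$ is an abelian $p$-group whose partition $\lambda_p(n)$ is built from the valuations $v_p(q-1)$ for $q\mid n$ (with corrections at $q=p$ and the anomalous case $p=2$). Then $G_p(n)$ equals the classical count of subgroups of the abelian $p$-group with partition $\lambda_p(n)$, expressible as a sum of products of Gaussian binomial coefficients $\gbinom{\cdot}{\cdot}_p$.

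Second, I would show that for $n$ in a set of natural density $1$, the partition $\lambda_p(n)$ is almost elementary abelian (most parts equal $1$), whence
\[
\log G_p(n) = \frac{\log p}{4}\,\omega_p(n)^2 + O\bigl(\omega_p(n)\log p\bigr)
\]
for each odd prime $p$, where $\omega_p(n)=\#\{q\mid n:q\equiv1\pmod p\}$. The main term arises because the number of subgroups of $(\Z/p)^r$ is $\sum_{k=0}^r\gbinom{r}{k}_p \sim p^{r^2/4}$, so taking logs with $r=\omega_p(n)$ gives $\tfrac14(\log p)\,\omega_p(n)^2$. The case $p=2$ requires separate accounting: $v_2(q-1)\ge 1$ for every odd prime $q$, and the distribution of $v_2(q-1)$ over primes $q$ is geometric with mean $2$, so the extra subgroup count coming from the non-elementary parts of $\lambda_2(n)$ averages to the additional $\tfrac{\log 2}{2}(\log\log n)^2$ contribution to the mean, explaining the $\log 2/2$ summand in $A$.

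Third, I would prove an Erd\H{o}s--Kac law for the $\phi$-additive function $F(n) = \sum_p\tfrac{\log p}{4}\,\omega_p(n)^2 + (\text{correction from } p=2)$ via the method of moments, bounding the error from Step~2 with Tur\'an--Kubilius-type variance estimates and truncating the sum at $p\le y(x)$ for a slowly growing threshold. For the truncated sum I expand $F_y(n)^k$ combinatorially over $2k$-tuples of primes $q_1,\dots,q_{2k}$ satisfying the relevant congruence conditions, evaluate $\tfrac1x\sum_{n\le x}\mathbf{1}[q_1\cdots q_{2k}\mid n]$ by Mertens-type estimates for primes in arithmetic progressions, and check that the main term matches the $k$-th Gaussian moment. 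The principal technical obstacle is precisely here: because $F$ is a weighted sum of \emph{squares} of quasi-independent Poisson-type variables, the leading contributions at moment level $k$ come from ``diagonal'' pairings among the $q_i$, and delicate bookkeeping is needed to identify the mean constant $A$ (with $A_0$ realized as the Mertens-type sum $\sum_p p^2\log p/((p-1)^3(p+1))$) and the variance constant $C$. This parallels the Erd\H{o}s--Pomerance analysis of $\Omega(\phi(n))$, where analogous polynomial-in-$\log\log n$ moments arise, but the subgroup-counting weights $\tfrac14\log p$ and the special role of $p=2$ force a substantially more intricate computation.
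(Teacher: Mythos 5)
Your overall strategy — describe $G_p(n)$ via the conjugate-partition/Gaussian-binomial formula, approximate $\log G(n)$ by a polynomial in additive functions, then apply the method of moments — is the same as the paper's. But the proposal contains a genuine gap at Step~2, and a related misidentification of where the $\tfrac{\log 2}{2}$ in $A$ comes from.

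The estimate $\log G_p(n) = \tfrac{\log p}{4}\omega_p(n)^2 + O(\omega_p(n)\log p)$ is formally correct but unusable when summed over all $p\mid\phi(n)$: for each large prime $p$ with $p\parallel\phi(n)$ one has $\omega_p(n)=1$, so the main term $\tfrac{\log p}{4}$ and the error term $O(\log p)$ are the same size, and the accumulated error over $p\mid\phi(n)$ is $O(\sum_{p\mid\phi(n)}\log p)=O(\log n)$, which completely swamps the $(\log\log n)^2$ scale of $\log G(n)$. Without a further idea your ``Step~3'' cannot even get off the ground, because the quantity $\sum_p\tfrac{\log p}{4}\omega_p(n)^2$ you propose to analyze is of order $\log n$, not $(\log\log n)^2$. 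The missing idea is to peel off the exact floor value $\log G_p(n)=\log 2$ that every $p\mid\phi(n)$ contributes (from the trivial and full Sylow subgroup), yielding the $\phi$-additive term $\log 2\cdot\omega(\phi(n))$, and only then approximate the \emph{excess} over this floor by $\tfrac14\sum_{q\le X}\omega_q(n)^2\Lambda(q)$ for a small truncation $X$. That $\log 2\cdot\omega(\phi(n))$ term has typical size $\tfrac{\log 2}{2}(\log\log n)^2$ (Erd\H{o}s--Pomerance) and is precisely the origin of the summand $\tfrac{\log 2}{2}$ in $A$; it is \emph{not} a correction coming from $p=2$. Indeed the $p=2$ contribution to $A_0$ is $\tfrac14\cdot\tfrac{4\log 2}{3}=\tfrac{\log 2}{3}$, not $\tfrac{\log 2}{2}$. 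A second, smaller omission: your main term $\tfrac{\log p}{4}\omega_p(n)^2$ keeps only the prime $q=p$, but the higher prime powers $\omega_{p^j}(n)^2\Lambda(p^j)$ for $j\ge2$ cannot be discarded; they are needed to produce $A_0 = \tfrac14\sum_{q}\tfrac{\Lambda(q)}{\phi(q)^2} = \tfrac14\sum_p\tfrac{p^2\log p}{(p-1)^3(p+1)}$ (the extra factor $\tfrac{p^2}{p^2-1}$ relative to $\sum_p\tfrac{\log p}{(p-1)^2}$ comes exactly from summing over all prime powers of $p$). Keeping only $j=1$ would give the wrong constant.
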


We briefly indicate the overall structure of the proof of Theorem 1.1. First, we understand the typical values of $\log G(n)$ by writing them as a linear combination of squares of well-understood additive functions, together with one anomalous ``$\phi$-additive'' function.

\begin{prop}\label{log G as a polynomial intro version}
Set $X = (\log\log x)^{1/2}(\log\log\log x)^{2}$. For any positive integer $n$, define
\begin{equation}  \label{Qnx definition}
P_n(x) = \log 2 \cdot \omega(\phi(n)) + \frac{1}{4} \sum_{q \leq X} \omega_q(n)^2 \Lambda(q),
\end{equation}
where $\Lambda(q)$ denotes the usual von Mangoldt function, and where the $\omega_q$ are additive functions defined in Definition~\ref{omega q def} below.
Then for all but $O(x/\log\log\log x)$ integers $n\le x$,
\begin{equation} \label{log gn as a polynomial intro version}
\log G(n) = P_n(x) + O\bigg(\frac{(\log\log x)^{3/2}}{\log\log\log x}\bigg).
\end{equation}
\end{prop}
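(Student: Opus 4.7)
The plan is to exploit the $\phi$-additive decomposition \eqref{phi additive} and analyze each factor $\log G_q(n)$ via the structure of the Sylow $q$-subgroup of $\Znt$. By the Chinese remainder theorem this Sylow subgroup is a finite abelian $q$-group whose type is a partition $\lambda^{(q)}(n)$ determined by the $q$-parts of $\phi(p^a)$ over prime powers $p^a \| n$, and $G_q(n)$ is given by a Hall-polynomial formula in $q$ and $\lambda^{(q)}(n)$. The key observation is that, for most $n$, this partition is very close to the elementary-abelian shape $(1^{\omega_q(n)})$, in which case $G_q(n) = \sum_{j=0}^{\omega_q(n)} \gbinom{\omega_q(n)}{j}_q$. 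A direct estimate of this sum, dominated by the central binomials $\gbinom{k}{\lfloor k/2\rfloor}_q \asymp q^{\lfloor k^2/4\rfloor}$, yields
\[
\log G_q(n) = \tfrac14 \omega_q(n)^2 \log q + \log 2 \cdot \bm{1}\{q \mid \phi(n)\} + E_q(n), \qquad |E_q(n)| = O(\log q),
\]
where the $\log 2$ comes from the pair of extremal unit binomials $\gbinom{k}{0}_q = \gbinom{k}{k}_q = 1$ when $\omega_q(n)$ is odd and is offset by a lower-order Gaussian correction when $\omega_q(n)$ is even. Summed over $q \le X$ the $E_q$ contribute at most $O(\sum_{q \le X} \log q) = O(X) = O((\log\log x)^{1/2}(\log\log\log x)^{2})$, comfortably within the error budget.

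Given this prime-by-prime formula, the proof splits into a small-prime and a large-prime range. For $q \le X$, I would apply Tur\'an--Kubilius-type variance estimates to $\omega_q$ and to the auxiliary additive function $\omega_q^\flat(n) = \#\{p \mid n : q^2 \mid p - 1\}$, which controls how many parts of $\lambda^{(q)}(n)$ exceed $1$; together with a standard bound on the contribution of the squareful part of $n$, these estimates show that outside an exceptional set of size $O(x/\log\log\log x)$ the partition $\lambda^{(q)}(n)$ is elementary abelian except for $O(1)$ extra parts, uniformly in $q \le X$. On this good set the Hall-polynomial formula reduces to the elementary-abelian case, and the aggregate $E_q$ error is $O(X)$ as above, accounting for the small-prime portion of $\log G(n) - P_n(x)$.

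For large primes $q > X$, first- and second-moment bounds of the form $\sum_{n \le x} \omega_q(n) \ll x \log\log x/(q-1)$ force $\omega_q(n) \le 1$ for every $q > X$ on a set of $n \le x$ of density $1 - o(1)$; when $\omega_q(n) = 1$ and the corresponding part of $\lambda^{(q)}(n)$ equals $1$ (the generic scenario), one has $G_q(n) = 2$ exactly, so $\log G_q(n) = \log 2$ matches the $P_n(x)$ contribution $\log 2 \cdot \bm{1}\{q \mid \phi(n)\}$ for this $q$. The exceptional large-$q$ cases (with $\omega_q(n) \ge 2$ or with $v_q(p-1) \ge 2$ for some $p \mid n$) contribute on average at most $\sum_{q > X} (\log\log x/q)^{2}\log q \ll (\log\log x)^{3/2}/\log\log\log x$ by direct computation, and discarding the $O(x/\log\log\log x)$ integers on which this sum exceeds its typical order gives the stated error.

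The principal obstacle is obtaining the uniformity in $q$ needed to replace $\lambda^{(q)}(n)$ by its elementary-abelian idealization simultaneously for all $q \le X$, while keeping the total exceptional set of size $O(x/\log\log\log x)$. One must bundle the exceptional sets coming from (i) $\omega_q^\flat(n)$ being atypically large for some $q$, (ii) $n$ being divisible by a large prime power, and (iii) the $q = 2$ anomaly $(\Z/2^a\Z)^\times \cong \Z/2\Z \times \Z/2^{a-2}\Z$ for $a \ge 3$, which forces an adjustment in the definition of $\omega_2$, so that their union still has size $O(x/\log\log\log x)$. The specific choice $X = (\log\log x)^{1/2}(\log\log\log x)^2$ is dictated by balancing the aggregate small-prime error $O(X)$ against the large-prime exceptional contribution, which together saturate the budget $O((\log\log x)^{3/2}/\log\log\log x)$.
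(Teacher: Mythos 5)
Your overall skeleton — the $\phi$-additive decomposition $\log G(n) = \sum_q \log G_q(n)$, the split between small primes $q\le X$ and large primes, and the large-prime exceptional-set bounds $\sum_{q>X}(\log\log x/q)^2\log q$ — is compatible with what the paper does. But there is a fatal error in the small-prime formula, and it causes your main estimate to lose a term of the same size as the main term itself.

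You claim that for most $n$ the Sylow $q$-subgroup of $\Znt$ is ``elementary abelian except for $O(1)$ extra parts,'' and deduce
\[
\log G_q(n) = \tfrac14\omega_q(n)^2\log q + \log 2\cdot \bm{1}\{q\mid\phi(n)\} + E_q(n),\qquad |E_q(n)| = O(\log q).
\]
This is false for small $q$. For a fixed small prime $q$, write $\ba = (a_1,a_2,\dots)$ for the conjugate of the type of the Sylow $q$-subgroup; then $a_j$ equals (essentially) $\omega_{q^j}(n)$, the number of primes $p\mid n$ with $p\equiv 1\pmod{q^j}$, and for typical $n$ one has $\omega_{q^j}(n)\sim \log\log n/\phi(q^j)$, which is \emph{not} $O(1)$ for small $q$ and small $j$ — e.g.\ $\omega_4(n)\sim\tfrac12\log\log n$. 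So a typical $\Znt$ has a Sylow $2$-subgroup with many $\Z_4$- and $\Z_8$-factors, not an elementary abelian group with a handful of corrections. The correct expansion, which the paper proves (Proposition~\ref{gpn as a polynomial}), is
\[
\log G_p(n) = \frac{\log p}{4}\sum_{j=1}^{\lambda_p(n)}\ovomega_{p^j}(n)^2 + O(\lambda_p(n)\log p),
\]
involving \emph{all} the prime powers $p^j$, and the omitted terms $\tfrac14\omega_{p^j}(n)^2\log p$ for $j\ge 2$ are of order $(\log\log x)^2/(p^j)^2 \cdot\log p$, which for $p=2$, $j=2$ is already $\asymp(\log\log x)^2$. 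That is the same order of magnitude as your claimed main term and is vastly larger than the permitted error $O\bigl((\log\log x)^{3/2}/\log\log\log x\bigr)$. In fact this is precisely why $P_n(x)$ is written as a sum over prime \emph{powers} with the von Mangoldt weight: the terms $\omega_{q^2}(n)^2\log q,\ \omega_{q^3}(n)^2\log q,\dots$ are part of the main term, not an error. Your invocation of $\omega_q^\flat(n)$ acknowledges that parts of size $\ge 2$ exist but does not account for the fact that their contribution to $\log G_q(n)$ is quadratic in $\omega_{q^2}(n)$ and thus not absorbable as $O(\log q)$.

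Repairing this requires you to prove and use the full sum-over-$j$ formula for $\log G_p(n)$ (which in turn needs the Gaussian-binomial estimates for arbitrary partitions, not just the elementary-abelian case), and then to show that the pieces with $q=p^j$, $j\ge 2$, $p^j\le X$ are exactly the remaining summands in $P_n(x)$, while those with $p\le Y$ but $p^j>Y$ or with $p>Y$ are negligible for most $n$. That is the structure of the paper's argument (Lemmas~\ref{triple threat lemma}--\ref{Y instead of X lemma} and the final truncation from $Y=(\log\log x)^2$ down to $X$). Your balancing argument for the choice of $X$ and your large-prime computation are both reasonable and in the right spirit; the missing piece is recognizing that the small-prime Sylow subgroups have genuinely rich structure and contribute correspondingly rich (not $O(1)$) terms.
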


For any function $f(n)$, define the ``mean''
\begin{equation}  \label{mu definition}
\mu(f) = \mu(f;x) = \sum_{p \leq x} \frac{f(p)}{p},
\end{equation}
and set
\begin{equation}  \label{Dx definition}
D(x) = \log 2 \cdot \mu(\omega \circ \phi) + \frac{1}{4} \sum_{q \leq X} \mu(\omega_q)^2 \Lambda(q)
\end{equation}
(so that $D(x)$ is simply $P_n(x)$ with each function of $n$ replaced by its mean).
Our strategy is to show that the values of $P_n(x)$ for $n \leq x$ are, asymptotically as $x$ tends to~$\infty$, normally distributed with mean $D(x)$ and variance $C(\log\log x)^{3}$, with $C$ defined as in Theorem \ref{main theorem}. We carry out this strategy via the ``method of moments.''

\begin{prop}\label{moment asymptotics}
For any positive integer $h$, define the ``$h$th moment''
\begin{equation}  \label{Mh def}
M_h(x) = \sum_{n \leq x} \big( P_n(x) - D(x) \big)^h.
\end{equation}
Then
\[
\lim_{x \to \infty} \frac{M_h(x)}{C^{h/2}x(\log\log x)^{3h/2}} = \begin{cases} \frac{h!}{(h/2)!2^{h/2}}, &\mbox{if } h \text{ is even,} \\ 
0, & \mbox{if } h \text{ is odd.} \end{cases}
\]
\end{prop}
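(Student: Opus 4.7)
The plan is to apply the method of moments to a decomposition of $P_n(x) - D(x)$ into linear and quadratic pieces. Using the algebraic identity $\omega_q(n)^2 - \mu(\omega_q)^2 = (\omega_q(n)-\mu(\omega_q))^2 + 2\mu(\omega_q)(\omega_q(n)-\mu(\omega_q))$, I write $P_n(x) - D(x) = L_n + R_n$, where
\[
L_n = \log 2\cdot\bigl(\omega(\phi(n))-\mu(\omega\circ\phi)\bigr) + \tfrac12 \sum_{q\le X}\Lambda(q)\mu(\omega_q)\bigl(\omega_q(n)-\mu(\omega_q)\bigr)
\]
is a centered linear combination of (essentially) additive functions and
\[
R_n = \tfrac14 \sum_{q\le X}\Lambda(q)\bigl(\omega_q(n)-\mu(\omega_q)\bigr)^2
\]
is a sum of squared centered additive functions. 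Using $\mu(\omega_q) \asymp (\log\log x)/\phi(q)$ and the Erd\H os--Pomerance estimate $\mathrm{Var}(\omega\circ\phi) \asymp (\log\log x)^3$, one expects $L_n$ to have typical size $(\log\log x)^{3/2}$, whereas $R_n$ has typical size $\ll (\log\log x)\log\log\log x$, well below the main order.

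The first main step is to compute $\sum_{n\le x}L_n^h$ directly, expanding the $h$th power as a weighted sum over $h$-tuples of summands drawn from either the $\omega\circ\phi$ term or the $\omega_q$ terms. After interchanging summation, each configuration reduces to counting $n\le x$ subject to prescribed divisibility conditions (certain primes $p\mid n$ with $p\equiv 1\pmod{q_i}$, and/or certain primes $\ell\mid\phi(n)$); routine sieve evaluations (Mertens, Siegel--Walfisz, which suffice because $q\le X$ is very small) give $x$ times a product of local densities for each configuration. The leading contributions come from the pair partitions, producing the Gaussian moment $\frac{h!}{(h/2)!\,2^{h/2}}$ times the $(h/2)$th power of the variance, while odd moments vanish to leading order because any unpaired index carries a $\log\log x$ saving. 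Writing $T_\phi = \omega(\phi(n))-\mu(\omega\circ\phi)$, computing the variance of $L_n$ amounts to four groups of covariances --- $\mathrm{Var}(\log 2\cdot T_\phi)$, the cross-covariance between $\log 2\cdot T_\phi$ and $\tfrac12\sum_q\Lambda(q)\mu(\omega_q)T_q$, and the diagonal and off-diagonal parts of the variance of the $q$-sum --- which I expect to match the four pieces of $C$, namely $(\log 2)^2/3$, $2A_0\log 2$, $4A_0^2$, and $B$, respectively. In the final step, expanding $R_n^k$ and applying a Tur\'an--Kubilius style bound yields $\sum_{n\le x}R_n^k \ll x\bigl((\log\log x)\log\log\log x\bigr)^k = o\bigl(x(\log\log x)^{3k/2}\bigr)$, and Cauchy--Schwarz in the binomial expansion of $(L_n+R_n)^h$ absorbs every term involving a positive power of $R_n$ into the error, proving the claim.

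The chief technical obstacle will be the variance computation for $L_n$, precisely because $\omega(\phi(n))$ is not literally additive in $n$. Following the Erd\H os--Pomerance approach, I would approximate $\omega(\phi(n))$ by $\sum_{\ell\le Y}\mathbf 1[\ell\mid\phi(n)]$ for a suitable cutoff $Y$, reducing its moments and its cross-moments with the $\omega_q$'s to the joint distribution of the near-independent events $\ell\mid\phi(n)$. Matching these carefully with the $\omega_q$ contributions, and verifying that the truncation at $q\le X$ costs only $o((\log\log x)^{3/2})$ overall --- this is precisely where the specific choice $X=(\log\log x)^{1/2}(\log\log\log x)^2$ is needed --- will require careful bookkeeping to recover the precise constants $A_0$ and $B$.
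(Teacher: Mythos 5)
Your decomposition of $P_n(x) - D(x)$ into a linear piece $L_n$ and a quadratic piece $R_n$ is precisely the paper's decomposition: in the authors' notation $L_n = \log 2\cdot F_{\omega_0}(n) + \tfrac12\sum_{q\le X}\Lambda(q)\mu(\omega_q)F_{\omega_q}(n)$ and $R_n = \tfrac14\sum_{q\le X}\Lambda(q)F_{\omega_q}(n)^2$ (their $S_2$ and $S_1$ pieces, up to normalizing constants), where $F_{\omega_q}(n) = \omega_q(n) - \mu(\omega_q)$ is the centred function in the Granville--Soundararajan scheme. The main-term analysis you describe (expand $L_n^h$ over $h$-tuples of summands, identify pair partitions as the leading configurations, collect the Gaussian factor $h!/(2^{h/2}(h/2)!)$ times $\mathrm{Var}(L_n)^{h/2}$, and verify that the variance decomposes into the four pieces $(\log 2)^2/3$, $2A_0\log 2$, $4A_0^2$, $B$) is exactly what the paper carries out, with the pair-partition bookkeeping organized via the $\Phi_h$/$R_h$ polynomial machinery of Section~\ref{polynomial arithmetic} and the variance identification appearing as Lemma~\ref{OOM of sum of partial derivs}. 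Your identification of the covariances involving $\omega\circ\phi$ as the hard part, and your plan to emulate Erd\H os--Pomerance (truncated sum of indicators of $\ell\mid\phi(n)$, Bombieri--Vinogradov in the background), is likewise what the paper does in Lemmas~\ref{medium cov lemma} and~\ref{big cov lemma}; note that ``Mertens + Siegel--Walfisz'' is enough for the $\omega_q$'s with $q\le X$, but the cross-moments with $\omega\circ\phi$ genuinely require Bombieri--Vinogradov, since the implicit moduli run up to a power of $t$.

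Where you diverge, modestly, is in how the quadratic piece $R_n$ is absorbed. The paper does a three-way multinomial expansion of $(\log 2\cdot F_{\omega_0} + S_1 + S_2)^h$ and estimates the $h_1>0$ terms directly (Lemma~\ref{lower order terms}, Case 2), using a somewhat delicate injectivity trick on the pairing $\tau$ (the set $\Delta(\tau)$) to extract extra factors of $1/\phi(q)$; this yields the bound $x(\log\log x)^{3h/2-1/4}(\log\log\log x)^{2h}$. Your alternative --- bound $\sum_{n\le x}R_n^k \ll_k x\bigl((\log\log x)\log\log\log x\bigr)^k$ by extending Tur\'an--Kubilius to higher moments (generalized H\"older reduces this to individual moment bounds of the $F_{\omega_q}$, uniform in $q\le X$ since $\mu(\omega_q)\gg (\log\log x)/X\to\infty$), then Cauchy--Schwarz on each cross term $L_n^{h-j}R_n^j$ --- is a cleaner route to the same conclusion: for $j\ge1$ the Cauchy--Schwarz estimate gives a $(\log\log x)^{j/2}$ power saving (times $(\log\log\log x)^{O(1)}$), which suffices. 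This is slightly less sharp than the paper's explicit error exponent but easier to run, and it also sidesteps the polynomial accounting of Section~\ref{polynomial arithmetic}, which the authors appear to have built with an eye towards the more general forthcoming work they mention. So: same decomposition, same covariance lemmas, same pair-partition skeleton; a neater absorption of the quadratic error, at the cost of requiring the higher-moment Tur\'an--Kubilius input (uniform in $q$), which you correctly flag as needing verification.
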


\noindent The quantity $\frac{h!}{(h/2)!2^{h/2}}$ for even $h$ is precisely the $h$th moment of the standard normal distribution, and it is a famous lemma of Chebyshev that the normal distribution is determined by its moments (see Section~\ref{flourish} for more details).

Our proof is inspired by work of Granville and Soundararajan~\cite{gs07}, who described a way to organize method-of-moments proofs in number theory to make the main terms more readily identifiable. The proof herein is tailored to the specific function $P_n(x)$ mentioned above, which can be viewed as a quadratic polynomial (in increasingly many variables) being evaluated at values of specific additive functions. For any fixed polynomial, one can apply the same techniques to its evaluation at values of additive functions from a much more general class, thereby obtaining Erd{\H o}s--Kac laws for these polynomials of additive functions as well (including, for example, Erd{\H o}s--Kac laws for products of additive functions). This generalization is the subject of forthcoming work by the authors.

Since $G(n)$ counts subgroups of $\Znt$ as sets, the reader might wonder about the equally natural function $I(n)$ that counts subgroups of $\Znt$ up to isomorphism. It turns out to be much easier to establish an Erd{\H o}s--Kac law for $\log I(n)$, partially because $I(n)$ is a $\phi$-multiplicative function of a much simpler type, but mostly because we can leverage existing work of Erd{\H o}s and Pomerance~\cite{ep85} on the number of prime factors of $\phi(n)$ to greatly shorten our proof.

\begin{theorem}  \label{logIn EK thm}
For every real number $u$,
\begin{multline*}
\lim_{x \to \infty} \frac{1}{x} \#\bigg\{n \leq x \colon \log I(n) < \frac{\log 2}{2}(\log\log n)^2 + u\cdot \sqrt{\frac{\log 2}{3}} (\log\log n)^{3/2} \bigg\} \\
= \frac{1}{\sqrt{2\pi}} \int_{-\infty}^u e^{-t^2/2} \, dt.
\end{multline*}
In other words, the quantity $\log I(n)$ is normally distributed, with mean $\frac{\log 2}{2} (\log\log n)^2$ and variance $\frac{\log 2}{3}(\log\log n)^3$.
\end{theorem}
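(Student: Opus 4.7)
The plan is to reduce Theorem~\ref{logIn EK thm} to the Erd\H{o}s--Kac law for $\omega(\phi(n))$ established by Erd\H{o}s and Pomerance~\cite{ep85}, by proving a pointwise approximation of the form
\[
\log I(n) = (\log 2)\cdot\omega(\phi(n)) + o\big((\log\log n)^{3/2}\big)
\]
valid for all but $o(x)$ integers $n \le x$. Once this approximation is in hand, the normalized distribution of $\log I(n)$ inherits its Gaussian limit from that of $\omega(\phi(n))$.

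The starting point is structural: since isomorphism classes of subgroups of a finite abelian group split Sylow-by-Sylow, we have the $\phi$-multiplicative factorization
\[
\log I(n) = \sum_{p\,\mid\,\phi(n)} \log I_p(n),
\]
where $I_p(n)$ counts isomorphism classes of subgroups of the Sylow $p$-subgroup of $\Znt$. Writing $\Znt \cong \prod_j \Z/c_j\Z$ via the Chinese Remainder Theorem, the Sylow $p$-subgroup has type $\lambda_p(n) := (v_p(c_j))_j$, and $I_p(n)$ equals the number of partitions fitting inside $\lambda_p(n)$ componentwise. In particular, $I_p(n) = 2$ exactly when $\lambda_p(n) = (1)$, that is, when the Sylow $p$-subgroup of $\Znt$ is cyclic of prime order~$p$.

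The next step is to show that for almost all $n \le x$, the overwhelming majority of primes $p \mid \phi(n)$ do have $\lambda_p(n) = (1)$. The exceptional primes fall into three kinds: (a) primes $p$ with $p^2 \mid q-1$ for some prime $q \mid n$; (b) primes $p$ with $p \mid q-1$ and $p \mid q'-1$ for two distinct primes $q, q' \mid n$; and (c) the single prime $p = 2$, whose Sylow subgroup is inherently non-cyclic since $(\Z/2^a\Z)^\times \cong \Z/2\Z \times \Z/2^{a-2}\Z$ for $a \ge 3$. Each exceptional prime contributes at most $O(\log(\omega(n)+1)) = O(\log\log\log n)$ to the discrepancy $\log I(n) - (\log 2)\cdot\omega(\phi(n))$. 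Brun--Titchmarsh-type sieve bounds, combined with Mertens-type estimates for primes in arithmetic progressions, show that the total contribution from types~(a) and~(b) is $o((\log\log n)^{3/2})$ for all but $o(x)$ values of $n \le x$; type~(c) contributes $O(\log\log\log n)$ per~$n$, which is trivially negligible.

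The main obstacle is the quantitative control of the exceptional sum in the transition range $p \asymp \log\log x$, where the Sylow $p$-rank $r_p(n) := \#\{q \mid n : p \mid q-1\}$ is neither typically $0$ nor typically~$1$. There one needs sufficiently sharp second-moment estimates for $r_p(n)$ summed over primes~$p$, which reduces essentially to bounding the number of configurations with two distinct $q, q' \mid n$ sharing a common prime factor in $\gcd(q-1, q'-1)$. These bounds are considerably softer than the polynomial-in-additive-functions moment machinery required for Theorem~\ref{main theorem}, which is precisely why the authors can leverage Erd\H{o}s--Pomerance rather than redo that analysis for $\log I(n)$.
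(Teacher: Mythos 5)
Your proposal and the paper take genuinely different routes to the same conclusion, but your route contains a quantitative error that would need repair. The paper's argument is a two-line sandwich: Lemma~\ref{log In bounds lemma} shows
\[
\omega(\phi(n))\log 2 \;\le\; \log I(n) \;\le\; \Omega(\phi(n))\log 2,
\]
where the lower bound comes from the trivial observation that each $I_p(n)\ge 2$, and the upper bound comes from noting that the number of subpartitions of a partition of $k_p=\nu_p(\phi(n))$ is at most the number $2^{k_p}$ of subsets of its Ferrers diagram. Since Erd\H os and Pomerance prove that \emph{both} $\omega(\phi(n))$ and $\Omega(\phi(n))$ satisfy Erd\H os--Kac laws with identical mean $\frac12(\log\log n)^2$ and variance $\frac13(\log\log n)^3$, the normalized distribution of $\log I(n)$ is squeezed between two copies of the same Gaussian limit and must converge to it. No pointwise approximation is ever needed, and no count of ``exceptional primes'' is ever made.

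Your route instead aims to prove the much stronger pointwise estimate $\log I(n) = (\log 2)\,\omega(\phi(n)) + o\big((\log\log n)^{3/2}\big)$ for almost all $n$. That is a more delicate statement (it does not follow from the two Erd\H os--Kac laws alone), and your key quantitative step is wrong as stated. You claim each exceptional prime $p$ contributes $O\big(\log(\omega(n)+1)\big)=O(\log\log\log n)$ to the discrepancy. But $\log I_p(n)$ is controlled by $\nu_p(\phi(n))$, not by the number of rows of the partition: for instance if the $p$-Sylow of $\Znt$ is cyclic of order $p^{k_p}$ then $I_p(n)=k_p+1$, and $k_p=\nu_p(\phi(n))$ can grow like $\log n/\log p$ even when $\omega(n)$ is bounded (take $n=3^{100}$). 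More to the point, for \emph{typical} $n$ and small $p$ (say $p=2$), one has $k_p \asymp \log\log n$, and the best general bound on the number of subpartitions of a partition of $k_p$ is $2^{k_p}$ (or, via Lehmer's estimate, $\exp\big(\pi\sqrt{2k_p/3}\big)$), giving $\log I_p(n)\ll k_p$ or $\ll\sqrt{k_p}$ --- both much larger than $\log\log\log n$. Your total could still come out $o\big((\log\log n)^{3/2}\big)$ after a corrected calculation (the per-prime contribution decays in $p$), but as written the per-prime bound is simply false. You would do better to bound $\log I_p(n)-\log 2 \le (\nu_p(\phi(n))-1)\log 2$ and sum, at which point you would be trying to control $\Omega(\phi(n))-\omega(\phi(n))$ --- and you would then discover that the sandwich argument above does the same job with far less effort.

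Finally, your classification of exceptional primes omits the contribution from prime powers $p^2\mid n$ with $p$ odd (which adds a cyclic factor $\Z_{p^{\nu_p(n)-1}}$ to the $p$-Sylow), though this is a minor oversight compared to the issue above.
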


\noindent The leading constant here, $\frac{\log2}2 \approx 0.34657$, for the typical size of $\log I(n)$ is a bit less than half the leading constant $A$ for the typical size of $\log G(n)$ in Theorem~\ref{main theorem}; in other words, the total number $G(n)$ of subgroups of $\Znt$ is typically a bit more than the square of the number $I(n)$ of isomorphism classes of subgroups of~$\Znt$.

We begin by establishing Proposition~\ref{log G as a polynomial intro version} in Section~\ref{from partitions section}, which will require a brief digression into counting subgroups of finite abelian $p$-groups using partitions and Gaussian binomial coefficients. Sections~\ref{notation and setup} through~\ref{flourish} comprise the proof of Theorem~\ref{main theorem}, with the verification of Proposition~\ref{moment asymptotics} taking place in Section~\ref{proof of moment asymptotics}; a more detailed roadmap is provided in Section~\ref{notation and setup}, along with notation and conventions that will be used through the rest of the paper. Finally, Section~\ref{max order section} contains the proof of the aforementioned theorem about $I(n)$, along with proofs of the following maximal-order results for $\log G(n)$ and $\log I(n)$:


\begin{theorem}\label{max order Gn}
The order of magnitude of the maximal order of $\log G(n)$ is ${(\log x)^2}/{\log\log x}$. More precisely,
\[
\frac1{16} \frac{(\log x)^2}{\log\log x} + O\bigg( \frac{(\log x)^2\log\log\log x}{(\log\log x)^2} \bigg) \leq \max_{n \leq x} \big( \log I(n) \big) \leq \frac1{4} \frac{(\log x)^2}{\log\log x} + O\bigg( \frac{(\log x)^2}{(\log\log x)^2} \bigg).
\]
\end{theorem}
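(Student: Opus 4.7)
The plan is to establish the upper and lower bounds separately, working from the decomposition $G(n) = \prod_p G_p(n)$ together with the structural description of $\Znt$ as a direct product of cyclic groups indexed by the prime-power divisors of $n$.

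For the upper bound, the key step is a group-theoretic lemma: if $H$ is a finite abelian $p$-group of rank $r$ and order $p^m$, then the number of subgroups of $H$ is at most $p^{rm/4 + O(r\log m)}$. This follows from the Birkhoff--Hall subgroup enumeration formula (which Section~\ref{from partitions section} will develop) combined with the elementary estimate $\gbinom{a}{b}_p \le p^{b(a-b)}$: the number of subgroups of $H$ of a type $\mu$ with $\mu \subseteq \lambda$ is bounded by $p^{\sum_i \mu'_i(\lambda'_i - \mu'_i)} \le p^{\sum_i (\lambda'_i)^2/4}$, and $\sum_i (\lambda'_i)^2 \le \lambda'_1 \sum_i \lambda'_i = rm$. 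Applied to the $p$-Sylow subgroup of $\Znt$, this gives $\log G_p(n) \le \tfrac14 r_p(n) v_p(\phi(n)) \log p + O(r_p(n)\log\log x)$, where $r_p(n)$ denotes the rank of that $p$-Sylow. Since $\Znt$ is a direct product of at most $\omega(n) + O(1)$ cyclic groups, each $p$-Sylow has rank $r_p(n) \le \omega(n) + O(1)$. Summing over $p \mid \phi(n)$ and using $\sum_p v_p(\phi(n))\log p = \log\phi(n) \le \log x$, together with the crude bound $\sum_p r_p(n) \le \Omega(\phi(n)) \le \log x/\log 2$ to control the error, yields
\[
\log G(n) \le \tfrac14 \omega(n) \log\phi(n) + O((\log x)(\log\log x)).
\]
The classical inequality $\omega(n) \le \log x/\log\log x + O(\log x/(\log\log x)^2)$, obtained from $\log n \ge \theta(p_{\omega(n)})$ and the prime number theorem, then yields the desired upper bound.

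For the lower bound, I would exhibit an explicit $n \le x$. Take $p = 2$, set $a = \lfloor \log\log x/\log 2\rfloor$, and let $n = \prod_q q$ over the primes $q \le y$ with $q \equiv 1 \pmod{2^a}$, where $y$ is chosen so that $n \le x$. Since $2^a = O(\log\log x)$ is well within the Siegel--Walfisz range, counting primes in arithmetic progression yields $y \asymp 2^a \log x$ and
\[
r := \pi(y; 2^a, 1) = \frac{\log x}{2\log\log x} + O\!\left(\frac{\log x \cdot \log\log\log x}{(\log\log x)^2}\right).
\]
Every such $q$ contributes a cyclic factor to $\Znt$ of order divisible by $2^a$, so the 2-Sylow of $\Znt$ contains $(C_{2^a})^r$ as a subgroup. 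By the Birkhoff--Hall formula, the subgroups of $(C_{2^a})^r$ of type $\mu$ with $\mu'_i = \lfloor r/2\rfloor$ for $1 \le i \le a$ already account for $2^{ar^2/4 + O(r)}$ distinct subgroups, so
\[
\log G(n) \ge \log G_2(n) \ge \tfrac14 a r^2 \log 2 + O(r\log r).
\]
Substituting $a\log 2 = \log\log x + O(1)$ and the value of $r$ gives the claimed lower bound.

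The main obstacle, and the reason the constants $1/4$ and $1/16$ differ by exactly a factor of four, is the interplay between the sharp bound $p^{rm/4}$ on subgroup counts of abelian $p$-groups and the achievable ranks: the upper bound can only use $r_p(n) \le \omega(n) \le \log x/\log\log x$, while the lower-bound construction achieves rank $r \sim \log x/(2\log\log x)$, exactly half, yielding the $(1/2)^2 = 1/4$ ratio between the constants. The number-theoretic inputs (the Robin-type bound on $\omega(n)$ and the prime number theorem in arithmetic progressions to very small moduli) are classical; the only delicate piece is tracking error terms through the subgroup-count estimate and the prime counting to recover the stated error $O((\log x)^2 \log\log\log x/(\log\log x)^2)$.
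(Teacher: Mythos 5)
The upper bound in your proposal is essentially the paper's argument, just phrased in terms of rank rather than the conjugate-partition entries $\ovomega_{p^j}(n)$; the key inequality $\sum_j (\lambda'_j)^2 \le \lambda'_1 \sum_j \lambda'_j = r\, v_p(\phi(n))$ is what the paper is implicitly using when it bounds one factor of $\ovomega_{p^j}(n)$ by $\omega(n)+2$ and leaves the other to sum to $\log\phi(n)$, and both approaches yield the same constant $\tfrac14$.

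The lower bound contains a genuine gap. You set $a = \lfloor \log\log x / \log 2 \rfloor$, which gives $2^a \asymp e^{\log\log x} = \log x$, \emph{not} $O(\log\log x)$; the modulus you need is of size roughly $\log x$, while the primes $q$ you count go up to $y \asymp (\log x)^2$, so the modulus is of order $\sqrt{y}$ --- far outside the Siegel--Walfisz range $2^a \le (\log y)^A$. If you shrink $a$ to make Siegel--Walfisz applicable, say $2^a \le (\log y)^A$, the same bookkeeping gives $a \ll \log\log\log x$ and $r \ll \log x/\log\log x$, hence $\log G_2(n) \ll (\log x)^2 \log\log\log x / (\log\log x)^2$, which falls short of the target $\frac1{16}(\log x)^2/\log\log x$ by a factor of $\log\log x/\log\log\log x$. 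So the obstruction is not merely a typo; the Siegel--Walfisz route fundamentally cannot reach the claimed lower bound. The paper's proof avoids this by using the Bombieri--Vinogradov theorem: it averages $\bigl|\theta(V;p,1) - V/(p-1)\bigr|$ over primes $p$ in a dyadic interval near $Q = \log x/(\log\log x)^{2B+1}$, where $Q \le V^{1/2}/(\log V)^B$, and extracts a single good prime $p$ with $\pi(V;p,1) \ge (1+o(1))\log x/(2\log\log x)$. That averaging step is precisely the missing ingredient in your construction; without it (or GRH for the specific modulus) you cannot guarantee enough primes $q\equiv 1 \pmod{p}$ below the required threshold.
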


\begin{theorem}\label{max order In}
The order of magnitude of the maximal order of $\log I(n)$ is ${\log x}/{\log\log x}$. More precisely,
\[
\frac{\log 2}{5} \frac{\log x}{\log\log x} + O\bigg( \frac{\log x}{(\log\log x)^2} \bigg) \leq \max_{n \leq x} \big( \log I(n) \big) \leq \pi\sqrt{\frac23} \frac{\log x}{\log\log x} + O\bigg( \frac{\log x}{(\log\log x)^2} \bigg).
\]
\end{theorem}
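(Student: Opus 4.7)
The plan separates into upper and lower bounds; both rely on the factorization $I(n) = \prod_q I_q(n)$, where $I_q(n)$ is the number of isomorphism classes of subgroups of the $q$-Sylow subgroup of $\Znt$. This Sylow is a finite abelian $q$-group of order $q^{a_q}$ (with $a_q := v_q(\phi(n))$), corresponding to a partition $\lambda^{(q)}$ of $a_q$; accordingly, $I_q(n)$ equals the number of sub-partitions $\mu \subseteq \lambda^{(q)}$.

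For the upper bound, bound $I_q(n)$ by the total number of partitions of integers at most $a_q$, namely $\sum_{k=0}^{a_q} p(k) \le (a_q+1)p(a_q)$, where $p(\cdot)$ denotes the ordinary partition function. The Hardy--Ramanujan asymptotic $\log p(a) = \pi\sqrt{2a/3} + O(\log a)$ then yields $\log I_q(n) \le \pi\sqrt{2a_q/3} + O(\log(a_q+1))$, whence
\[
\log I(n) \le \pi\sqrt{\tfrac{2}{3}}\sum_q \sqrt{a_q} + O\biggl(\sum_q \log(1+a_q)\biggr),
\]
subject to the budget $\sum_q a_q \log q = \log\phi(n) \le \log x$. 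Maximizing $\sum_q \sqrt{a_q}$ under this budget: concavity of $\sqrt{\cdot}$ together with the observation that $\log q$ is smallest at small primes shows that the integer optimum is the indicator profile $a_q = \mathbf{1}_{q\le Q}$ with $\theta(Q) \sim \log x$, giving $\sum_q \sqrt{a_q} \sim \pi(Q) \sim \log x/\log\log x$ by the prime number theorem. The claimed upper bound $\pi\sqrt{2/3}\cdot\log x/\log\log x$ then follows after careful bookkeeping of errors.

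For the lower bound, the elementary observation $I_q(n) \ge 2$ whenever $q\mid\phi(n)$ (the trivial subgroup and any nontrivial $q$-Sylow are non-isomorphic) yields $\log I(n) \ge (\log 2)\,\omega(\phi(n))$. It therefore suffices to exhibit a single $n\le x$ with $\omega(\phi(n))$ of order $\log x/\log\log x$. The explicit choice $n = \prod_{q\le Y}q^2$ works: since $\phi(q^2) = q(q-1)$, we have $\phi(n) = \prod_{q\le Y}q(q-1)$, which is divisible by every prime $q\le Y$, so $\omega(\phi(n)) \ge \pi(Y)$. Chebyshev's estimate $\log n = 2\theta(Y)\le\log x$ permits $Y = (1+o(1))(\log x)/2$, whence $\omega(\phi(n)) \ge (1+o(1))(\log x)/(2\log\log x)$, which exceeds the claimed constant $(\log 2)/5$.

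The main technical obstacle is in the upper bound: rigorously verifying that the indicator profile is optimal among all nonnegative integer profiles (i.e., comparing the heuristic continuous Lagrange solution to the true discrete optimum) and ensuring that both the Hardy--Ramanujan error and the $\sum_q \log(1+a_q)$ contribution fit inside $O(\log x/(\log\log x)^2)$. The lower bound is otherwise entirely elementary.
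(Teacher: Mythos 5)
Your lower bound is correct, entirely elementary, and in fact gives a strictly better constant than the paper's. The construction $n = \prod_{q\le Y} q^2$, with $Y$ maximal subject to $2\theta(Y)\le\log x$ (so $Y \sim \tfrac12\log x$ by the prime number theorem, not merely Chebyshev's bounds), yields $\phi(n) = \prod_{q\le Y} q(q-1)$, which is divisible by every prime up to $Y$; hence $\omega(\phi(n)) \ge \pi(Y)$ and
\[
\log I(n) \ge \log 2\cdot\pi(Y) = \frac{\log 2}{2}\,\frac{\log x}{\log\log x} + O\bigg(\frac{\log x}{(\log\log x)^2}\bigg),
\]
with constant $\tfrac{\log 2}{2}\approx0.347$ rather than the stated $\tfrac{\log 2}{5}\approx0.139$. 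The paper instead takes $q$ to be the least prime congruent to $1$ modulo $\prod_{p\le U}p$ and invokes Linnik's theorem with Xylouris's exponent $5$, which is the source of the factor $\tfrac15$; that route has the virtue of reaching any constant below $\log 2$ under Montgomery's conjecture, but unconditionally your construction is both simpler and sharper.

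The upper bound is where the genuine gap lies, and you have identified it yourself. After reducing to bounding $\sum_q\sqrt{a_q}$ subject to $\sum_q a_q\log q\le\log x$ over nonnegative integer profiles $(a_q)$, you assert from concavity that the optimum is the indicator profile ($a_q=1$ for $q\le Q$ and $a_q=0$ otherwise). This is unproved, and the heuristic is misleading: the continuous Lagrange solution is $a_q\propto(\log q)^{-2}$, not the indicator, and determining the discrete extremizer is delicate. The paper avoids the optimization entirely with one application of Cauchy--Schwarz,
\[
\bigg(\sum_{p\mid\phi(n)}\sqrt{k_p}\bigg)^2 \le \bigg(\sum_{p\mid\phi(n)} k_p\log p\bigg)\bigg(\sum_{p\mid\phi(n)}\frac{1}{\log p}\bigg),
\]
bounding the first factor by $\log\phi(n)\le\log x$ and the second by $\tfrac{\log x}{(\log\log x)^2}(1+O(1/\log\log x))$ via the rearrangement argument of Lemma~\ref{log recip phi sum} (among $m\le x$, the sum $\sum_{p\mid m}1/\log p$ is maximized when $m$ is a primorial). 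A secondary issue: your per-factor error $O(\log(1+a_q))$, summed over the up to $\asymp\log x/\log\log x$ primes dividing $\phi(n)$, contributes a quantity of the \emph{same} order as the main term, which cannot fit inside the claimed $O(\log x/(\log\log x)^2)$. The paper sidesteps this by quoting Lehmer's explicit inequality $(m+1)P(m) < \exp\big(\pi\sqrt{2m/3}\big)$ for the partition function, which leaves no positive error to accumulate. To make your route rigorous you would need both a correct discrete optimization argument and a sharp partition-function bound; Cauchy--Schwarz together with Lemma~\ref{log recip phi sum} dispatches both difficulties in one stroke.
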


\section{Expressing $\log G(n)$ as a polynomial of additive functions}  \label{from partitions section}

In this section we prove Proposition~\ref{log G as a polynomial intro version}. First, we import a classical identity for the number of subgroups of a finite abelian $p$-group, which we alter into an approximate form that is suitable for our application. Then we describe exactly the $p$-Sylow subgroup of the multiplicative group $\Znt$ and record its approximate number of subgroups. Finally we sum this contribution over all primes $p$, which mostly involves dealing with the complication of truncating this sum suitably to avoid being overwhelmed with error terms; we employ some ``anatomy of integers'' arguments to show that this truncation is valid for almost all integers~$n$.

\subsection{Subgroups of $p$-groups}  \label{subgroups of p groups section}

Let us recall, from the classification of finitely generated abelian groups, that every finite abelian group of size $p^m$ can be uniquely written in the form $\Z_{p^{\alpha_1}} \times \Z_{p^{\alpha_2}} \times \cdots$ for some nonincreasing sequence $(\alpha_1,\alpha_2,\dots)$ of nonnegative integers summing to $m$. (We avoid naming the length of such sequences by the convention that all but finitely many of the $\alpha_j$ equal~$0$.) In other words, isomorphism classes of finite abelian $p$-groups are in one-to-one correspondence with partitions $\balpha = (\alpha_1,\alpha_2,\dots)$ of~$m$.

A subpartition $\bbeta$ of a partition $\balpha$ is a nonincreasing sequence $(\beta_1,\beta_2,\dots)$ of positive integers such that $\beta_j \le \alpha_j$ for all $j\ge1$; we write $\bbeta \preceq \balpha$ when $\bbeta$ is a subpartition of~$\balpha$. It is easy (though not quite trivial) to see that $\Z_{p^{\alpha_1}} \times \Z_{p^{\alpha_2}} \times \cdots$ contains an isomorphic copy of $\Z_{p^{\beta_1}} \times \Z_{p^{\beta_2}} \times \cdots$ if and only if $\bbeta \preceq \balpha$. We are interested in more precise information, however, about the number of subgroups of $\Z_{p^{\alpha_1}} \times \Z_{p^{\alpha_2}} \times \cdots$ that are isomorphic to $\Z_{p^{\beta_1}} \times \Z_{p^{\beta_2}} \times \cdots$.

\begin{definition}
Given partitions $\bbeta \preceq \balpha$ and a prime $p$, define $N_p(\balpha,\bbeta)$ to be the number of subgroups inside $\Z_{p^{\alpha_1}} \times \Z_{p^{\alpha_2}} \times \cdots$ that are isomorphic to $\Z_{p^{\beta_1}} \times \Z_{p^{\beta_2}} \times \cdots$. Define $N_p(\balpha)$ to be the number of subgroups inside $\Z_{p^{\alpha_1}} \times \Z_{p^{\alpha_2}} \times \cdots$ (as sets, not up to isomorphism), so that $N_p(\balpha) = \sum_{\bbeta \preceq \balpha} N_p(\balpha,\bbeta)$.
\end{definition}

As it happens, there is a classical formula for $N_p(\balpha,\bbeta)$, most conveniently expressed in terms of conjugate partitions.
Every partition $\balpha$ has a conjugate partition $\ba$, which is most easily obtained by transposing the Ferrers diagram corresponding to~$\balpha$. The number of parts (nonzero elements) of the conjugate partition $\ba$ is exactly equal to $\alpha_1$, and in general $\alpha_j$ equals the number of parts of $\ba$ that are at least $j$ in size; by the same token, the first part $a_1$ of $\ba$ is equal to the number of parts of $\balpha$, and so on.

We quote this classical formula, which can be found in~\cite[equation (1)]{ste92} and the references cited therein:

\begin{lemma}
\label{quoting number of subgroups lemma}
Let $p$ be prime, and let $\bbeta \preceq \balpha$ be partitions. Let $\ba = (a_1,a_2,\dots,a_{\alpha_1},0,\dots)$ and $\bb = (b_1,b_2,\dots,b_{\beta_1},0,\dots)$ be the conjugate partitions to $\balpha$ and $\bbeta$, respectively. Then
\[
N_p(\balpha,\bbeta) = \prod_{j=1}^{\alpha_1} p^{(a_j-b_j)b_{j+1}} \gbinom{a_j-b_{j+1}}{b_j-b_{j+1}}_p.
\]
Here, $\gbinom k\ell_p$ is the Gaussian binomial coefficient, defined to be $0$ if $\ell<0$ or $\ell>k$, and otherwise defined by
\begin{equation}  \label{Gauss binom def}
\gbinom k\ell_p = \prod_{j=1}^\ell \frac{p^{k-\ell+j}-1}{p^j-1}.
\end{equation}
\end{lemma}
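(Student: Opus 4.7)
The plan is simply to import this classical formula from \cite{ste92}; we do not reprove it here. For completeness, the natural derivation proceeds by induction on $\alpha_1$ using the short exact sequence $0 \to pG \to G \to G/pG \to 0$, where $G = \Z_{p^{\alpha_1}} \times \Z_{p^{\alpha_2}} \times \cdots$. Here $G/pG$ is an $\F_p$-vector space of dimension $a_1$ (the number of nonzero parts of $\balpha$), while $pG$ is isomorphic to the $p$-group with partition $(\alpha_1-1,\alpha_2-1,\dots)$ (zero parts deleted), whose conjugate partition is the shifted sequence $(a_2, a_3, \ldots)$.

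Given a candidate subgroup $H \le G$ of type $\bbeta$, its image in $G/pG$ has $\F_p$-dimension $b_1$, and the intersection $H \cap pG = pH$ is a subgroup of $pG$ whose conjugate partition is $(b_2, b_3, \ldots)$. One then counts such $H$ in two steps: first choose $K = pH \le pG$ of the appropriate type, which by induction contributes $\prod_{j \ge 2} p^{(a_j - b_j) b_{j+1}} \gbinom{a_j - b_{j+1}}{b_j - b_{j+1}}_p$; then count the subgroups $H$ with $pH = K$ and with $H/K$ a subspace of $G/pG$ of dimension $b_1$. This second count works out to be exactly the $j = 1$ factor $p^{(a_1 - b_1) b_2} \gbinom{a_1 - b_2}{b_1 - b_2}_p$, with the Gaussian binomial encoding the choice of a $b_1$-dimensional subspace of $G/pG$ that is compatible with the $p$-torsion constraints imposed by $K$, and the power of $p$ encoding the freedom in the lift of a basis of $H/K$ back to $G$ modulo $K$.

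The main obstacle in reconstructing this derivation is the bookkeeping for the lifting step: one must verify that among all set-theoretic lifts of a basis of $H/K$ into $G$, exactly $p^{(a_1 - b_1) b_2}$ choices produce a subgroup $H$ whose intersection with $pG$ is $K$ itself, rather than some strictly larger subgroup of $pG$. The Gaussian binomial, in turn, packages the standard count of subspaces of an $\F_p$-vector space with prescribed inclusion relations. Since these details are worked out carefully in \cite{ste92} and the references cited therein, and since we will use this lemma only as a black box in Section~\ref{from partitions section} and beyond, we import the formula as stated without further comment.
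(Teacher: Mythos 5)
Your bottom-line plan is the same as the paper's: both simply import this classical formula from~\cite{ste92} (and the references cited there) without reproving it, so citing the reference is the correct move.

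However, the informal sketch you include asserts a false identity: it is not generally true that $H \cap pG = pH$. For example, take $G = \Z_{p^2}$ and $H = pG \cong \Z_p$; then $H \cap pG = H$ has order $p$, while $pH = 0$. By the same token, the image of $H$ in $G/pG$ need not have $\F_p$-dimension $b_1$ --- in this example it has dimension $0$, not $1$ --- so the two-step count as you have set it up does not close the induction. A correct inductive proof must track a finer invariant; one standard route uses the socle $G[p]$ and the filtration $G[p] \supseteq G[p]\cap pG \supseteq G[p]\cap p^2G \supseteq \cdots$, in which $\dim(G[p]\cap p^{j-1}G) = a_j$, together with the analogous filtration of $H[p]$, and it is there that the shifted indices $b_{j+1}$ in the Gaussian binomials genuinely arise. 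Since you use the lemma only as a black box (as does the paper), this does not affect anything downstream, but as written the sketch would mislead a reader who tried to reconstruct the proof, so it should be deleted or replaced with a pointer to the specific argument in the references.
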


The reader might gain some intuition from considering the case where $\balpha=(1,\dots,1,0,\dots)$ and $\bbeta=(1,\dots,1,0,\dots)$ are the finest possible partitions of $k$ and $\ell$, respectively, so that $N_p(\balpha,\bbeta)$ is the number of $\ell$-dimensional subspaces of $\F_p^k$. In this case, $\ba=(k,0,\dots)$ and $\bb=(\ell,0,\dots)$ and so $N_p(\balpha,\bbeta)$ is simply $\gbinom k\ell_p$. It can be seen that the numerator of the formula~\eqref{Gauss binom def} is, up to a power of $p$, the number of $k\times\ell$ matrices over $\F_p$ with full rank $\ell$ (and the column space of each such matrix defines an $\ell$-dimensional subspace of $\F_p^k$), while the denominator is, up to the same power of $p$, the number of invertible $\ell\times\ell$ matrices over $\F_p$ (which act by left multiplication on the set of $k\times\ell$ matrices while preserving their column spaces).

We will prefer an approximate version of the formula from Lemma~\ref{quoting number of subgroups lemma}, which the following pair of lemmas provides.

\begin{lemma}
\label{gaussian binomial estimate lemma}
For any prime $p$ and any integers $0\le \ell\le k$, there exists a real number $0\le \theta < 6$ such that
\[
\gbinom k\ell_p = p^{\ell(k-\ell)} (1+\theta p^{-1}).
\]
\end{lemma}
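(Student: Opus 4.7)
The plan is to separate the Gaussian binomial coefficient into its dominant power of $p$ times a correction factor, and to show the correction factor lies in the interval $[1,\, 1+6/p)$. Starting from the product formula~\eqref{Gauss binom def}, I would write $p^{k-\ell+j}-1 = p^{k-\ell+j}(1 - p^{-(k-\ell+j)})$ and $p^j - 1 = p^j(1 - p^{-j})$, so that
\[
\gbinom k\ell_p = p^{\ell(k-\ell)} \prod_{j=1}^\ell \frac{1 - p^{-(k-\ell+j)}}{1 - p^{-j}},
\]
using $\sum_{j=1}^\ell (k-\ell+j) - \sum_{j=1}^\ell j = \ell(k-\ell)$ to collect the powers of $p$. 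It then suffices to prove that the product $R := \prod_{j=1}^\ell (1 - p^{-(k-\ell+j)})/(1-p^{-j})$ satisfies $1 \le R < 1 + 6/p$.

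The lower bound is immediate: because $k \ge \ell$, we have $k - \ell + j \ge j$ for each $j$, so $1 - p^{-(k-\ell+j)} \ge 1 - p^{-j} > 0$ and every factor in $R$ is at least~$1$. For the upper bound, I would drop the numerator factors to get
\[
R \le \prod_{j=1}^\ell (1 - p^{-j})^{-1} \le \prod_{j=1}^\infty (1 - p^{-j})^{-1},
\]
which is Euler's partition generating function evaluated at $1/p$. For $p \ge 3$, the elementary bound $-\log(1-x) \le x/(1-x)$ combined with a geometric series yields $\log \prod_{j\ge 1}(1-p^{-j})^{-1} \le p/(p-1)^2 \le 3/4$, so $R \le e^{3/4} < 2.12 < 1 + 6/3$, as required.

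The only real obstacle is the case $p = 2$, where the infinite Euler product equals $\sum_{n \ge 0} P(n)\,2^{-n} \approx 3.46$ (with $P(n)$ the partition function) and is therefore not as easily controlled. To handle it I would split $\sum_{j \ge 1}(-\log(1 - 2^{-j}))$ into the three explicit terms for $j = 1, 2, 3$ (namely $\log 2 + \log(4/3) + \log(8/7) < 1.12$) plus a tail $\sum_{j\ge 4} -\log(1 - 2^{-j}) \le (16/15)\sum_{j \ge 4} 2^{-j} = 2/15$; the total is less than $1.25$, so $R \le e^{1.25} < 3.5 < 1 + 6/2 = 4$. Both bounds on $\theta$ then follow, with room to spare; the constant $6$ in the statement simply reflects a convenient round number rather than the sharpest attainable bound.
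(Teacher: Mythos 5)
Your decomposition is exactly the right one, and it matches the paper's: write $\gbinom k\ell_p = p^{\ell(k-\ell)}R$ with $R=\prod_{j=1}^\ell(1-p^{-(k-\ell+j)})/(1-p^{-j})$, observe $R\ge1$ because $k-\ell+j\ge j$, and bound $R$ above by the Euler-type product $\prod_{j\ge1}(1-p^{-j})^{-1}$. The paper then estimates this product directly by factoring out the $j=1$ factor and using $\prod_{j\ge2}(1-p^{-j})\ge1-\sum_{j\ge2}p^{-j}>1-\tfrac1{p(p-1)}$, which gives the clean bound $R<\frac{p^2}{p^2-p-1}\le 1+6p^{-1}$ uniformly for all primes $p\ge2$ by a single quadratic inequality; you instead take logarithms and split into $p\ge3$ and $p=2$.

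The logarithmic route can be made to work, but as written there is a concrete gap in the $p\ge3$ branch. You correctly derive $\log R\le p/(p-1)^2$, but then replace this $p$-dependent quantity by its maximum value $3/4$ at $p=3$ and conclude $R\le e^{3/4}<2.12<1+6/3$, \emph{``as required.''} What the lemma requires is $R<1+6/p$, and this is a stronger condition for larger primes: for $p=7$ one needs $R<1+6/7\approx1.857$, while the bound you cite only gives $R<2.12$. So the stated chain of inequalities fails to establish the claim for every prime $p\ge7$. (It does happen to go through for $p\in\{3,5\}$, and your separate $p=2$ computation giving $R<e^{1.25}<3.5<4$ is fine.) The fix is to keep the $p$-dependence: from $\log R\le p/(p-1)^2\le3/4$, use an inequality such as $e^x\le 1+2x$ for $0\le x\le3/4$ to get $R\le 1+2p/(p-1)^2$, and then note that for $p\ge3$ one has $(p-1)^2\ge\tfrac49p^2$, whence $2p/(p-1)^2\le 9/(2p)<6/p$. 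This closes the gap; but note that the paper's rational-function estimate avoids the case split and logarithms entirely and is somewhat tidier.
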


\begin{proof}
For any integers $k\ge \ell\ge j\ge1$, we have the inequalities
\begin{equation}
\label{squeeze between}
p^{k-\ell} \le \frac{p^{k-\ell+j}-1}{p^j-1} \le \frac{p^{k-\ell+j}}{p^j-1} = \frac{p^{k-\ell}}{1-p^{-j}}.
\end{equation}
Note that
\[
\prod_{j=2}^\ell (1-p^{-j}) \ge 1-\sum_{j=2}^\ell p^{-j} > 1-\frac1{p(p-1)},
\]
and so
\[
\prod_{j=1}^\ell \frac1 {1-p^{-j}} < \frac1{1-p^{-1}} \bigg( 1-\frac1{p(p-1)} \bigg)^{-1} \le 1+6p^{-1},
\]
where the last inequality follows by a simple calculation. Therefore equation~\eqref{squeeze between} implies that
\begin{equation*}
p^{\ell(k-\ell)} \le \prod_{j=1}^\ell \frac{p^{k-\ell+j}-1}{p^j-1} \le p^{\ell(k-\ell)} \prod_{j=1}^\ell \frac1 {1-p^{-j}} < p^{\ell(k-\ell)} (1+6p^{-1}),
\end{equation*}
which establishes the lemma.
\end{proof}

\begin{lemma}
\label{subgroups fixed type lemma}
Given partitions $\bbeta \preceq \balpha$, let $\bb$ and $\ba$ be the partitions conjugate to $\bbeta$ and $\balpha$ respectively. For any prime $p$, 
\[
N_p(\balpha,\bbeta) = \bigg( \prod_{j=1}^{\alpha_1} p^{(a_j-b_j)b_j} \bigg) \big( 1+\theta p^{-1} \big)^{\alpha_1}
\]
for some real number $0\le \theta < 6$.
\end{lemma}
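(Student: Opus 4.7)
The plan is to combine the two earlier lemmas directly. By Lemma \ref{quoting number of subgroups lemma},
\[
N_p(\balpha,\bbeta) = \prod_{j=1}^{\alpha_1} p^{(a_j-b_j)b_{j+1}} \gbinom{a_j-b_{j+1}}{b_j-b_{j+1}}_p,
\]
and I would apply Lemma \ref{gaussian binomial estimate lemma} to each Gaussian binomial factor with $k=a_j-b_{j+1}$ and $\ell=b_j-b_{j+1}$. Note that $k\ge\ell\ge 0$ holds because $\bbeta\preceq\balpha$ (so $a_j\ge b_j$) and because the conjugate partition sequences $b_j$ are nonincreasing. This produces, for each $j$, a number $0\le\theta_j<6$ with
\[
\gbinom{a_j-b_{j+1}}{b_j-b_{j+1}}_p = p^{(b_j-b_{j+1})(a_j-b_j)}(1+\theta_j p^{-1}).
\]

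The next step is a purely algebraic simplification of the exponent on $p$: adding the original exponent gives
\[
(a_j-b_j)b_{j+1}+(b_j-b_{j+1})(a_j-b_j) = (a_j-b_j)\bigl(b_{j+1}+b_j-b_{j+1}\bigr) = (a_j-b_j)b_j,
\]
so that
\[
N_p(\balpha,\bbeta) = \Bigl(\prod_{j=1}^{\alpha_1} p^{(a_j-b_j)b_j}\Bigr)\prod_{j=1}^{\alpha_1}(1+\theta_j p^{-1}).
\]

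The only remaining step is to package the second product as $(1+\theta p^{-1})^{\alpha_1}$ for a single $\theta\in[0,6)$. Since each $\theta_j\in[0,6)$, the product $\prod_j(1+\theta_j p^{-1})$ lies in the interval $[1,(1+6p^{-1})^{\alpha_1})$, while the function $\theta\mapsto(1+\theta p^{-1})^{\alpha_1}$ is continuous and strictly increasing on $[0,6)$ with the same range (when $\alpha_1\ge 1$; the case $\alpha_1=0$ is trivial since both products are empty). The intermediate value theorem then produces the desired $\theta$, completing the proof. No step here looks like a serious obstacle; the only thing one must verify carefully is that the indices fed into Lemma \ref{gaussian binomial estimate lemma} satisfy $0\le\ell\le k$, which follows from $\bbeta\preceq\balpha$ and the monotonicity of the conjugate partitions.
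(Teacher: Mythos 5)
Your proposal is correct and follows essentially the same route as the paper's proof: apply Lemma~\ref{quoting number of subgroups lemma} and Lemma~\ref{gaussian binomial estimate lemma} factor by factor, collapse the exponent $(a_j-b_j)b_{j+1}+(b_j-b_{j+1})(a_j-b_j)=(a_j-b_j)b_j$, and then invoke the intermediate value property of $\theta\mapsto(1+\theta p^{-1})^{\alpha_1}$ to replace the $\alpha_1$ individual $\theta_j$'s by a single~$\theta$. Your extra verification that $0\le b_j-b_{j+1}\le a_j-b_{j+1}$ (from $\bbeta\preceq\balpha$ and monotonicity of the conjugate $\bb$) is a detail the paper leaves implicit, but it is the same argument.
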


\begin{proof}
By Lemmas~\ref{quoting number of subgroups lemma} and~\ref{gaussian binomial estimate lemma}, there exist real numbers $0\le\theta_j<6$ such that
\begin{align*}
N_p(\balpha,\bbeta) &= \prod_{j=1}^{\alpha_1} p^{(a_j-b_j)b_{j+1}} \gbinom{a_j-b_{j+1}}{b_j-b_{j+1}}_p \\
&= \prod_{j=1}^{\alpha_1} p^{(a_j-b_j)b_{j+1}} \big( p^{(b_j-b_{j+1}) (a_j-b_j)} (1+\theta_jp^{-1}) \big) \\
&= \prod_{j=1}^{\alpha_1} p^{(a_j-b_j)b_j} (1+\theta_jp^{-1}).
\end{align*}
The lemma now follows from the intermediate value property of the continuous function $f(\theta) = ( 1+\theta p^{-1})^{\alpha_1}$ on the interval $0\le\theta\le6$, along with the observation that
\[
f(0) \prod_{j=1}^{\alpha_1} p^{(a_j-b_j)b_j} \le \prod_{j=1}^{\alpha_1} p^{(a_j-b_j)b_j} (1+\theta_jp^{-1}) \le f(6) \prod_{j=1}^{\alpha_1} p^{(a_j-b_j)b_j}.
\]
\end{proof}

Finally, we want to sum $N_p(\balpha,\bbeta)$ over all subpartitions $\bbeta$ of $\balpha$. It turns out that the dominant contribution to this sum comes from the subpartitions $\bbeta$ nearest to $\frac12\balpha$.

\begin{lemma}
\label{odd even sum lemma}
For any integer $a\ge0$ and any prime $p$, we have
\[
\sum_{b=0}^a p^{(a-b)b} = p^{a^2/4+O(1)} \quad\text{and}\quad p^{(a-\lfloor \frac a2\rfloor)\lfloor \frac a2\rfloor} = p^{a^2/4+O(1)}.
\]
\end{lemma}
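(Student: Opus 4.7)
The plan is to verify both estimates by direct elementary computation, treating the second as a warm-up for the first.

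For the second estimate, I would split on the parity of $a$. If $a = 2m$ is even, then $\lfloor a/2 \rfloor = m$ and $(a - \lfloor a/2\rfloor) \lfloor a/2 \rfloor = m^2 = a^2/4$ exactly. If $a = 2m+1$ is odd, then $\lfloor a/2 \rfloor = m$ and the product is $(m+1)m = m^2 + m = a^2/4 - 1/4$. In either case the exponent is $a^2/4 + O(1)$, so the corresponding power of $p$ is $p^{a^2/4 + O(1)}$ uniformly in $p \ge 2$.

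For the sum, the lower bound is immediate: dropping all but the $b = \lfloor a/2 \rfloor$ term and invoking the estimate just proved gives $\sum_{b=0}^a p^{(a-b)b} \ge p^{a^2/4 + O(1)}$. For the matching upper bound, I would exploit the symmetry of the exponent $(a-b)b$ about $b = a/2$ and the fact that it decays quadratically. Writing $b = \lfloor a/2 \rfloor + k$, a short computation shows $(a-b)b \le a^2/4 - k^2 + O(1)$ for each $k$ with $|k| \le a/2 + 1$. Therefore
\[
\sum_{b=0}^{a} p^{(a-b)b} \;\le\; p^{a^2/4 + O(1)} \sum_{k \in \Z} p^{-k^2}.
\]
The remaining sum is bounded uniformly in $p \ge 2$, for instance by comparison with the geometric series: $\sum_{k \in \Z} p^{-k^2} \le 1 + 2\sum_{k \ge 1} p^{-k} = 1 + 2/(p-1) \le 3$. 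Since $3 \le p^2$ for every prime $p$, the factor of $3$ is absorbed into the $O(1)$ in the exponent, yielding $\sum_{b=0}^a p^{(a-b)b} \le p^{a^2/4 + O(1)}$ uniformly in $a$ and $p$.

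The only subtlety worth flagging is that the implicit constant in the $O(1)$ must be uniform in the prime $p$ (not merely for fixed $p$), since we will later sum these estimates over primes; this uniformity is what the comparison $3 \le p^2$ at the end secures. No genuine obstacle arises.
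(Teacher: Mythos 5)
Your overall strategy (estimate the peak exactly, then control the sum by the quadratic decay of the exponent away from the peak) is sound and is close in spirit to the paper's proof, which also splits by parity and bounds the off-peak terms geometrically. However, the key intermediate inequality you assert is false when $a$ is odd, so as written the argument has a genuine gap.

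You claim that with $b = \lfloor a/2 \rfloor + k$ one has $(a-b)b \le a^2/4 - k^2 + O(1)$ uniformly for $|k| \le a/2 + 1$. This is an identity (with $O(1)=0$) when $a$ is even, since then $\lfloor a/2\rfloor = a/2$. But when $a = 2m+1$ is odd, $\lfloor a/2\rfloor = m$ and a direct computation gives
\[
(a-b)b = (m+1-k)(m+k) = m(m+1) + k - k^2 = \frac{a^2}{4} - k^2 + \Big(k - \tfrac14\Big),
\]
so the discrepancy between $(a-b)b$ and $a^2/4 - k^2$ is $k - \tfrac14$, which is \emph{unbounded} as $k$ ranges up to $m+1$. (Concretely, at $k=m+1$, i.e.\ $b=a$, the left side is $0$ while $a^2/4 - k^2 = -m - \tfrac34$ is very negative; the claimed $O(1)$ would have to absorb a quantity of size $m$.) The subsequent bound $\sum_b p^{(a-b)b} \le p^{a^2/4+O(1)} \sum_k p^{-k^2}$ therefore does not follow from what you wrote.

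The fix is small. Either shift by the exact center: $(a-b)b = a^2/4 - (b - a/2)^2$ holds exactly, $b - a/2$ ranges over integers or half-integers depending on parity, and in both cases $\sum_b p^{-(b-a/2)^2} \le 4 \le p^2$. Or keep your substitution but record the true exponent for odd $a$: $(a-b)b = m(m+1) - k(k-1)$, and note $k(k-1)\ge |k|$ for integer $|k|\ge 2$, so $\sum_k p^{-k(k-1)} \le 3$. Either repair gives the stated conclusion, and your treatment of the second estimate and of the lower bound for the sum is correct as written. The paper's proof avoids this pitfall by working with the exact center $c$ (resp.\ $c+\tfrac12$) in each parity case and using the cruder linear bound $(c-b)^2 \ge c-b$, which is adequate for convergence.
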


\begin{proof}
Suppose first that $a=2c$ is even. Using $(2c-b)b = c^2-(c-b)^2 \le c^2-(c-b)$ for $b\le c-1$, we obtain
\[
\sum_{b=0}^a p^{(a-b)b} = p^{c^2} + 2 \sum_{b=0}^{c-1} p^{(2c-b)b} = p^{c^2} + O\bigg( \sum_{b=0}^{c-1} p^{c^2-(c-b)} \bigg) = p^{c^2} + O\big( p^{c^2-1} \big),
\]
which is certainly of the form $p^{a^2/4+O(1)}$. Even more simply, $p^{(a-\lfloor \frac a2\rfloor)\lfloor \frac a2\rfloor} = p^{(2c-c)c} = p^{a^2/4}$ exactly.

Now suppose that $a=2c+1$ is odd. Using $(2c+1-b)b = (c+\frac12)^2-(c+\frac12-b)^2 \le (c+\frac12)^2 - (c+\frac12-b)$ for $b\le c-1$, we obtain
\begin{align*}
\sum_{b=0}^a p^{(a-b)b} &= 2 \bigg( p^{c(c+1)} + \sum_{b=0}^{c-1} p^{(2c+1-b)b} \bigg) \\
&= 2 \bigg( p^{c(c+1)} + O\bigg( \sum_{b=0}^{c-1} p^{(c+\frac12)^2 - (c+\frac12-b)} \bigg) \bigg) = 2p^{c(c+1)} + O\big( p^{(c+\frac12)^2-\frac12} \big).
\end{align*}
Since $(c+\frac12)^2-\frac12 < c(c+1)$, the right-hand side is $\asymp p^{c(c+1)} = p^{(c+\frac12)^2-\frac14}$, which is also of the form $p^{a^2/4+O(1)}$. On the other hand, $p^{(a-\lfloor \frac a2\rfloor)\lfloor \frac a2\rfloor} = p^{(2c+1-c)c} = p^{a^2/4+O(1)}$ as we have just seen.
\end{proof}

\begin{prop}
\label{log asymptotic p subgroups prop}
For any prime $p$ and any partition $\balpha$,
\[
\log N_p(\balpha) = \frac{\log p}4 \sum_{j=1}^{\alpha_1} a_j^2 + O(\alpha_1\log p).
\]
\end{prop}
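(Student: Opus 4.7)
The plan is to combine Lemma~\ref{subgroups fixed type lemma}, which handles the multiplicative constants $(1+\theta p^{-1})^{\alpha_1}$, with Lemma~\ref{odd even sum lemma}, which handles sums of the relevant exponential factors. Starting from the identity $N_p(\balpha) = \sum_{\bbeta \preceq \balpha} N_p(\balpha,\bbeta)$, I would pull out a uniform factor of $\bigl(1+O(p^{-1})\bigr)^{\alpha_1}$ using Lemma~\ref{subgroups fixed type lemma}, and reduce the problem to estimating
\[
S = \sum_{\bbeta \preceq \balpha} \prod_{j=1}^{\alpha_1} p^{(a_j-b_j)b_j},
\]
where $\bb$ is the conjugate partition of $\bbeta$. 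Since the conjugate of a subpartition of $\balpha$ is a subpartition of $\ba$, the indexing runs over sequences $\bb = (b_1,b_2,\dots,b_{\alpha_1})$ with $0\le b_j\le a_j$ satisfying $b_1\ge b_2\ge\cdots$.

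For the \emph{upper bound}, I would drop the monotonicity constraint $b_1\ge b_2\ge\cdots$ to factor the sum:
\[
S \le \prod_{j=1}^{\alpha_1} \sum_{b_j=0}^{a_j} p^{(a_j-b_j)b_j} = \prod_{j=1}^{\alpha_1} p^{a_j^2/4 + O(1)} = p^{\frac14\sum_j a_j^2 + O(\alpha_1)},
\]
using the first half of Lemma~\ref{odd even sum lemma}. For the \emph{lower bound}, I would retain only a single term of $S$, namely the subpartition given by $b_j = \lfloor a_j/2\rfloor$; this sequence is nonincreasing since $\ba$ is, so it is a valid subpartition. By the second half of Lemma~\ref{odd even sum lemma}, this single term already contributes $\prod_j p^{a_j^2/4 + O(1)} = p^{\frac14\sum_j a_j^2 + O(\alpha_1)}$. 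Combining these two bounds gives $S = p^{\frac14\sum_j a_j^2 + O(\alpha_1)}$.

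Reinstating the factor $(1+O(p^{-1}))^{\alpha_1}$, whose logarithm is $O(\alpha_1/p) = O(\alpha_1\log p)$ (since $\log p \ge \log 2 > 1/p$ for all primes $p$), and taking the logarithm of $N_p(\balpha)$ yields
\[
\log N_p(\balpha) = \frac{\log p}{4}\sum_{j=1}^{\alpha_1} a_j^2 + O(\alpha_1 \log p),
\]
as required. The only real subtlety is verifying that $b_j = \lfloor a_j/2\rfloor$ always defines a genuine subpartition (monotonicity must be checked) and confirming that the $(1+\theta p^{-1})^{\alpha_1}$ factor from Lemma~\ref{subgroups fixed type lemma}, being raised to a potentially large power $\alpha_1$, nevertheless contributes only to the allowable error term; both reduce to routine inequalities, so I do not anticipate any serious obstacle beyond careful bookkeeping.
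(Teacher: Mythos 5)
Your proposal is correct and takes essentially the same route as the paper's proof: pull out the uniform $(1+\theta p^{-1})^{\alpha_1}$ factor via Lemma~\ref{subgroups fixed type lemma} (and the intermediate value theorem), bound the remaining sum $S$ from above by dropping the monotonicity constraint and factoring, bound it from below by the single subpartition $b_j=\lfloor a_j/2\rfloor$, and apply Lemma~\ref{odd even sum lemma} to each factor. The one cosmetic difference is that you absorb the $(1+\theta p^{-1})^{\alpha_1}$ factor into the error via $\log(1+\theta p^{-1})=O(1/p)$, whereas the paper uses the cruder $\log(1+\theta p^{-1})=O(1)$; both land inside $O(\alpha_1\log p)$.
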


\begin{proof}
We recall our notation $\ba = (a_1,\dots,a_{\alpha_1},0,\dots)$ and $\bb = (b_1,\dots,b_{\alpha_1},0,\dots)$ for the conjugate partitions of $\balpha$ and $\bbeta$, respectively.
Since $N_p(\balpha) = \sum_{\bbeta \preceq \balpha} N_p(\balpha,\bbeta)$ by definition, Lemms~\ref {subgroups fixed type lemma} tells us that there exist constants $0\le\theta_\bbeta<6$ and $0\le\theta<6$ such that
\begin{equation}  \label{gotta fix order of summation}
N_p(\balpha) = \sum_{\bbeta \preceq \balpha} \bigg( \prod_{j=1}^{\alpha_1} p^{(a_j-b_j)b_j} \bigg) \big( 1+\theta_\bbeta p^{-1} \big)^{\alpha_1}= \big( 1+\theta p^{-1} \big)^{\alpha_1} \sum_{\bbeta \preceq \balpha} \bigg( \prod_{j=1}^{\alpha_1} p^{(a_j-b_j)b_j} \bigg),
\end{equation}
where the second equality again uses the intermediate value property of $f(\theta) = \big( 1+\theta p^{-1} \big)^{\alpha_1}$ (and the positivity of each summand). On one hand, since every $\bbeta \preceq \balpha$ corresponds to certain choices $0\le b_j\le a_j$, we have
\[
\sum_{\bbeta \preceq \balpha} \bigg( \prod_{j=1}^{\alpha_1} p^{(a_j-b_j)b_j} \bigg) \le \sum_{b_1=0}^{a_1} \cdots \sum_{b_{\alpha_1}=0}^{a_{\alpha_1}} \prod_{j=1}^{\alpha_1} p^{(a_j-b_j)b_j} = \prod_{j=1}^{\alpha_1} \sum_{b_j=0}^{a_j} p^{(a_j-b_j)b_j} = \prod_{j=1}^{\alpha_1} p^{a_j^2/4+O(1)}
\]
by Lemma~\ref {odd even sum lemma}. On the other hand, let $\bbeta_1$ be the subpartition of $\balpha$ whose conjugate partition is $\bb = ( \lfloor \frac{a_1}2 \rfloor, \dots, \lfloor \frac{a_{\alpha_1}}2 \rfloor, 0,\dots )$; considering only the summand on the right-hand side of equation~\eqref{gotta fix order of summation} corresponding to $\bbeta=\bbeta_1$ yields
\[
\sum_{\bbeta \preceq \balpha} \bigg( \prod_{j=1}^{\alpha_1} p^{(a_j-b_j)b_j} \bigg) \ge \prod_{j=1}^{\alpha_1} p^{(a_j-\lfloor \frac{a_j}2 \rfloor)\lfloor \frac{a_j}2 \rfloor} = \prod_{j=1}^{\alpha_1} p^{a_j^2/4+O(1)}
\]
by Lemma~\ref {odd even sum lemma} again. Combining these last two inequalities with equation~\eqref{gotta fix order of summation}, we conclude that
\[
N_p(\balpha) = \big( 1+\theta p^{-1} \big)^{\alpha_1} \prod_{j=1}^{\alpha_1} p^{a_j^2/4+O(1)}, 
\]
and therefore (since $\log(1+x)$ is bounded for $0\le x<3$)
\[
\log N_p(\balpha) = O(\alpha_1) + \sum_{j=1}^{\alpha_1} \bigg( \frac{a_j^2}4 + O(1) \bigg) \log p = \frac{\log p}4 \sum_{j=1}^{\alpha_1} a_j^2 + O(\alpha_1\log p)
\]
as claimed.
\end{proof}

\subsection{Counting $p$-subgroups of the multiplicative group}

We now begin the proof of Proposition~\ref{log G as a polynomial intro version} in earnest. As in the introduction,
let $G(n)$ denote the number of subgroups of $\Znt$ and let $G_p(n)$ denotes the number of $p$-subgroups of~$\Znt$. Since every finite abelian group is the direct product of its $p$-Sylow subgroups, it is easy to see that
\[
G(n) = \prod_{p \mid \phi(n)} G_p(n) \quad\text{and thus}\quad \log G(n) = \sum_{p \mid \phi(n)} \log G_p(n).
\]
Therefore, we first turn our attention to $\log G_p(n)$. It turns out that $\log G_p(n)$ can be expressed in terms of arithmetic functions $\ovomega_{p^j}(n)$, defined in two stages as follows:

\begin{definition} \label{omega q def}
For any positive integer $q$, let $\omega_q(n)$ denote the number of distinct primes $p \mid n$ such that $p \equiv 1 \mod q$. For example, $\omega_1(n)=\omega(n)$, while $\omega_2(n) = \omega(n)-1$ when $n$ is even and $\omega_2(n) = \omega(n)$ when $n$ is odd.
\end{definition}

These functions $\omega_q$ will play a prominent role in the remainder of this paper.
Already we start forming our intuition: since $\omega(n)$ is typically about $\log\log n$, and since one in every $\phi(q)$ primes on average is congruent to $1\mod q$, the function $\omega_q(n)$ is typically about $\frac1{\phi(q)} \log\log n$ in size; and indeed, an Erd{\H o}s--Kac law for $\omega_q(n)$ itself is straightforward to derive from the results in~\cite{ek40}.

We must make a punctilious alteration to these functions $\omega_q$ in order for them to exactly describe the structure of $\Znt$. However, our intuition should also include the understanding that the difference between $\omega_q$ and its sibling $\ovomega_q$ (defined momentarily) is negligible in the distributional sense; in particular, all we will really use is that $\ovomega_q(n) = \omega_q(n) + O(1)$ uniformly in integers $n$ and prime powers~$q$. Recall that the notation $p^r \parallel m$ means that $p^r \mid m$ but $p^{r+1} \nmid m$.

\begin{definition}
For any prime power $p^r$, define
\[
\ovomega_{p^r}(n) = \begin{cases}
\omega_{p^r}(n)+1, &\text{if $p$ is odd and } p^{r+1}\mid n, \\
\omega_{p^r}(n), &\text {if $p$ is odd and } p^{r+1}\nmid n, \\
\omega_{2}(n)+2, &\text{if $p^r=2^1$ and } 2^3\mid n, \\
\omega_{2}(n)+1, &\text{if $p^r=2^1$ and } 2^2\parallel n, \\
\omega_{2}(n), &\text{if $p^r=2^1$ and } 2^2\nmid n, \\
\omega_{2^r}(n)+1, &\text{if $p=2$ and $r>1$ and } p^{r+2}\mid n, \\
\omega_{2^r}(n), &\text{if $p=2$ and $r>1$ and } p^{r+2}\nmid n.
\end{cases}
\]
\end{definition}

\begin{definition}  \label{lambda p def}
For any prime $p$ and any positive integer $n$, let $\lambda_p(n)$ denote the largest power of $p$ that divides the Carmichael function $\lambda(n)$. In other words, $\lambda_p(n)$ is the exponent of the $p$-Sylow subgroup of $\Znt$.
\end{definition}

\begin{lemma}  \label{p Sylow structure lemma}
Let $n$ be a positive integer, and let $p$ be a prime dividing $\phi(n)$. The $p$-Sylow subgroup of $\Znt$ is isomorphic to
$\Z_{p^{\alpha_1}} \times \Z_{p^{\alpha_2}} \times \cdots$,
where $\balpha = (\alpha_1,\alpha_2,\dots)$ is the conjugate partition to
\[
\ba = (a_1,a_2,\dots) = (\ovomega_p(n), \ovomega_{p^2}(n), \dots, \ovomega_{p^{\lambda_p(n)}}(n), 0, \dots).
\]
\end{lemma}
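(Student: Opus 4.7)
The plan is a direct structural computation. First I would invoke the Chinese Remainder Theorem to decompose
\[
\Znt \cong (\Z/2^{e_0}\Z)^\times \times \prod_{\substack{q^{e_q}\parallel n \\ q \text{ odd}}} (\Z/q^{e_q}\Z)^\times,
\]
where $2^{e_0}\parallel n$, and then invoke the classical structure theorems for units of prime power moduli: $(\Z/q^e\Z)^\times$ is cyclic of order $q^{e-1}(q-1)$ for odd prime $q$, while $(\Z/2^e\Z)^\times$ is trivial for $e\le1$, cyclic of order $2$ for $e=2$, and isomorphic to $\Z/2\Z\times\Z/2^{e-2}\Z$ for $e\ge3$. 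Extracting the $p$-part of each cyclic factor then presents the $p$-Sylow subgroup of $\Znt$ explicitly as a direct product of cyclic $p$-groups.

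Next I would translate this direct product into the partition notation of the lemma. The key observation is that if $\balpha$ is the partition corresponding to an abelian $p$-group, its conjugate partition $\ba$ satisfies $a_j = \#\{k : \alpha_k \ge j\}$, which equals the number of cyclic summands of order at least $p^j$. So it suffices to count, for each $j\ge1$, the cyclic factors of order $\ge p^j$ appearing in the $p$-Sylow subgroup computed in the previous paragraph, and match this count with $\ovomega_{p^j}(n)$.

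The counting itself splits into cases by the parity of $p$. For odd $p$: each odd prime $q\mid n$ with $q\ne p$ contributes a cyclic $p$-part of order $p^{v_p(q-1)}$, which has order $\ge p^j$ precisely when $q\equiv 1\pmod{p^j}$, giving $\omega_{p^j}(n)$ such factors; additionally, if $p\mid n$ with $p^{e}\parallel n$, the factor $(\Z/p^e\Z)^\times$ contributes a cyclic $p$-group of order $p^{e-1}$, which contributes $+1$ to $a_j$ exactly when $p^{j+1}\mid n$. This recovers the two odd-$p$ cases of the definition of $\ovomega_{p^r}$. For $p=2$: each odd prime $q\mid n$ contributes a cyclic $2$-part of order $2^{v_2(q-1)}$, giving $\omega_{2^j}(n)$ factors of order $\ge 2^j$; and the factor $(\Z/2^{e_0}\Z)^\times$ contributes the $\Z/2\Z$ summand (present when $e_0\ge 2$) only to $a_1$, and the $\Z/2^{e_0-2}\Z$ summand (present when $e_0\ge 3$) to $a_j$ for all $j\le e_0-2$. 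A short case analysis on $e_0\in\{0,1,2,3,\ge 4\}$ for $j=1$ and on $e_0\ge j+2$ versus $e_0<j+2$ for $j\ge2$ yields exactly the five-case definition of $\ovomega_{2^r}(n)$.

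The only real obstacle is the bookkeeping around $p=2$, where the unit group has an extra $\Z/2\Z$ factor for $e_0\ge3$ that produces two cyclic $2$-summands rather than one, forcing the split definition of $\ovomega_2(n)$ into three subcases and of $\ovomega_{2^r}(n)$ for $r>1$ into two. Once the $p$-part of each CRT factor has been written down explicitly, the identification of the counts with $\ovomega_{p^j}(n)$ is forced by Definition~\ref{omega q def} and routine to verify.
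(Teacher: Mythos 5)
Your argument is correct and is essentially the same as the paper's: both decompose $\Znt$ via the Chinese Remainder Theorem and the structure of $(\Z/q^e\Z)^\times$, then read off $a_j$ as the number of cyclic $p$-summands of order at least $p^j$ and identify this count with $\ovomega_{p^j}(n)$, treating $p=2$ as a separate case because of the extra $\Z/2\Z$ factor. The paper presents the bookkeeping a bit more tersely (summarizing the $p=2$ case in one sentence), but the underlying reasoning is identical.
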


\begin{proof}
First let $p$ be an odd prime. Write the $p$-Sylow subgroup of $\Znt$ as $\Z_{p^{\alpha_1}} \times \Z_{p^{\alpha_2}} \times \cdots$ for some partition $\balpha$ which we want to determine. There are two possible sources of factors of $p$ in $\phi(n)$: primes $q\mid n$ such that $q \equiv 1 \mod p$ (including those congruent to $1$ modulo higher powers of $p$), and $p^2$ itself (or a higher power of $p$) dividing $n$. Furthermore, by the Chinese remainder theorem and the existence of primitive roots modulo every odd prime power, we can say exactly how each of these sources affects the $p$-Sylow subgroup of~$\Znt$.

Each prime $q \mid n$ such that $q \equiv 1 \mod {p^j}$ contributes, to the the $p$-Sylow subgroup of $\Znt$, a factor of $\Z_{p^m}$ with $m \geq j$ (indeed, $m$ is the exponent of $p$ in the prime factorization of $q-1$). Moreover, if $p^{j+1} \mid n$, then this power of $p$ contributes to the $p$-Sylow subgroup of $\Znt$ another factor of $\Z_{p^m}$ with $m\ge j$ (in this case, $m+1$ is the exponent of $p$ in the prime factorization of~$n$ itself). All factors of the form $\Z_{p^m}$ in the primary decomposition of $\Znt$ arise in one of these two ways; therefore, the number of factors of order at least $p^j$ in the $p$-Sylow subgroup of $\Znt$ is exactly equal to $\ovomega_{p^j}(n)$. But $a_j$, the $j$th entry in the conjugate partition to $\balpha$, is precisely the number of factors of order at least $p^j$ in $\Z_{p^{\alpha_1}} \times \Z_{p^{\alpha_2}} \times \cdots$. We conclude that $a_j = \ovomega_{p^j}(n)$ as desired.

The case $p = 2$ follows by an similar analysis, complicated slightly by the fact that $\Z_2^\times \cong \Z_1$ and $\Z_4^\times \cong \Z_2$ while $\Z_{2^r}^\times \cong \Z_{2^{r-2}} \times \Z_2$ when $r\ge3$.
\end{proof}

It is worth remarking that in particular, Lemma~\ref{p Sylow structure lemma} shows that the exponent of $p$ in the prime factorization of $\phi(n)$ is exactly $\sum_{j=1}^{\lambda_p(n)} \ovomega_{p^j}(n)$ for every prime~$p$. Consequently,
\begin{equation} \label{sum of its parts}
\sum_{p\mid\phi(n)} \sum_{j=1}^{\lambda_p(n)} \ovomega_{p^j}(n) \log p = \log\phi(n).
\end{equation}
Furthermore, let $\nu_p(n)$ denote the power of $p$ in the prime factorization of~$n$. The proof of Lemma~\ref{p Sylow structure lemma} also shows that for odd primes $p$,
\[
\lambda_p(n) = \max \big\{ \nu_p(n)-1, \max\{ j\colon \omega_{p^j}(n) \ge 1 \} \big\};
\]
when $p=2$, we must replace $\nu_p(n)-1$ with $\max\{0,\nu_2(n)-2\}$. In either case,
\begin{equation} \label{lambdapn bound}
\lambda_p(n) \le \max \bigg\{ \nu_p(n), \sum_{j\ge1} \omega_{p^j}(n) \bigg\}.
\end{equation}

With the following proposition, we may leave most of the details of abelian groups and partitions behind and operate within the realm of analytic number theory to complete the proof of Proposition~\ref{log G as a polynomial intro version}.

\begin{prop}  \label{gpn as a polynomial}
\label{translating to Wn lemma}
For any positive integer $n$ and any prime $p$ dividing $\phi(n)$,
\[
\log G_p(n) = \frac{\log p}4 \sum_{j=1}^{\lambda_p(n)} \ovomega_{p^j}(n)^2 + O(\lambda_p(n) \log p).
\]
Moreover, if $p \parallel \phi(n)$, then $\log G_p(n) = \log 2$.
\end{prop}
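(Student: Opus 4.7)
The plan is to compose the two preceding building blocks. Lemma~\ref{p Sylow structure lemma} identifies the $p$-Sylow subgroup of $\Znt$ as a specific finite abelian $p$-group whose type is encoded by the $\ovomega_{p^j}(n)$, while Proposition~\ref{log asymptotic p subgroups prop} counts the subgroups of any such group up to an acceptable error. No new estimates are required; the proposition is essentially a bookkeeping statement linking these two results.

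First I would apply Lemma~\ref{p Sylow structure lemma} to write the $p$-Sylow subgroup of $\Znt$ as $\Z_{p^{\alpha_1}}\times\Z_{p^{\alpha_2}}\times\cdots$, where the partition $\balpha$ is the conjugate of
\[
\ba = (a_1,a_2,\dots) = (\ovomega_p(n),\ovomega_{p^2}(n),\dots,\ovomega_{p^{\lambda_p(n)}}(n),0,\dots).
\]
Since every $p$-subgroup of $\Znt$ is contained in its $p$-Sylow subgroup, we have $G_p(n)=N_p(\balpha)$ in the notation of Section~\ref{subgroups of p groups section}.

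Next I would feed this directly into Proposition~\ref{log asymptotic p subgroups prop}, giving
\[
\log G_p(n) = \frac{\log p}{4}\sum_{j=1}^{\alpha_1} a_j^2 + O(\alpha_1 \log p).
\]
The only point needing care is reconciling the index ranges. By construction $a_j=\ovomega_{p^j}(n)$ for $1\le j\le\lambda_p(n)$ and $a_j=0$ otherwise, so $\alpha_1$ (the number of nonzero parts of $\ba$) is at most $\lambda_p(n)$. Extending the sum from $j\le\alpha_1$ up to $j\le\lambda_p(n)$ adjoins only zero terms, and inflating the error from $\alpha_1\log p$ to $\lambda_p(n)\log p$ is likewise harmless, yielding the stated formula.

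Finally, for the ``moreover'' clause, if $p\parallel\phi(n)$ then the $p$-Sylow subgroup of $\Znt$ has order exactly $p$ and is therefore isomorphic to $\Z_p$; since a cyclic group of prime order has exactly two subgroups, $G_p(n)=2$ and $\log G_p(n)=\log 2$. There is no real obstacle here: the proposition is a direct consequence of Lemma~\ref{p Sylow structure lemma} and Proposition~\ref{log asymptotic p subgroups prop}, and the only subtlety is the trivial check on the index range $\alpha_1\le\lambda_p(n)$.
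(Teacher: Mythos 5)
Your proposal is correct and follows essentially the same route as the paper: combine Lemma~\ref{p Sylow structure lemma} (to identify the $p$-Sylow subgroup and its conjugate partition data $a_j = \ovomega_{p^j}(n)$) with Proposition~\ref{log asymptotic p subgroups prop}, and handle $p \parallel \phi(n)$ by noting the $p$-part is $\Z_p$. The paper simply notes $\alpha_1 = \lambda_p(n)$ exactly (since $\ovomega_{p^j}(n)\ge 1$ for all $j\le\lambda_p(n)$, as $\lambda_p(n)$ is the exponent of the $p$-Sylow subgroup), whereas you hedge with $\alpha_1\le\lambda_p(n)$ and extend the range — a harmless and equivalent variant.
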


\begin{proof}
If $p \parallel \phi(n)$, then the $p$-part of $\Znt$ is precisely $\Z_p$, which trivially contains exactly two subgroups; hence $G_p(n) = 2$ in this case. In general, Proposition~\ref{log asymptotic p subgroups prop} tells us that
\[
\log N_p(\balpha) = \frac{\log p}4 \sum_{j=1}^{\alpha_1} a_j^2 + O(\alpha_1\log p),
\]
while Lemma~\ref{p Sylow structure lemma} gives us the exact evaluations $\alpha_1=\lambda_p(n)$ and $a_j = \ovomega_{p^j}(n)$.
\end{proof}

\subsection{Counting all subgroups of the multiplicative group}

The main goal of this section is to establish Proposition~\ref{log G as a polynomial intro version}, which says that $\log G(n)$ is approximately equal to a particular polynomial expression in additive functions of $n$, at least for most integers~$n$.

Several times in the course of these proofs, we will make use of upper bounds (of the correct order of magnitude) that follow, via partial summation, from the prime number theorem, or indeed from Mertens's formulas or even Chebyshev's bounds for prime-counting functions. Such sums include sums over primes like $\sum_{p\le y} 1/p$ or $\sum_{p>y} 1/p^2$, or sums over prime powers like $\sum_{p^j \le y} \log^2(p^j)/p^j$ or $\sum_{q\le y} \Lambda(q)/q$. Moreover, since $q/\phi(q) \le 2$ for all prime powers $q$, such sums can also be modified to have denominators of $p-1$ instead of~$p$, or $\phi(q)$ instead of~$q$. In all such cases, we shall simply say ``by partial summation'' to indicate that the required upper bounds follows in a standard way from these prime-counting estimates.

In addition, we will make frequent use of the following Mertens-type estimate for arithmetic progressions, which can be found in~\cite{nor} or~\cite{pom}:
\begin{lemma}\label{mertenspartial}
For $2 \leq q \leq x$, we have $\displaystyle \sum_{\substack{p \leq x \\ p \equiv 1 \mod q}} \frac{1}{p} = \frac{\log\log x}{\phi(q)} + O\bigg( \frac{\log q}{\phi(q)} \bigg)$.
\end{lemma}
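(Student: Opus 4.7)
The plan is to proceed by partial summation combined with character orthogonality, reducing first to the Chebyshev-weighted version
\[
T(x) = \sum_{\substack{p \leq x \\ p \equiv 1 \mod q}} \frac{\log p}{p} = \frac{\log x}{\phi(q)} + O\bigg(\frac{\log q}{\phi(q)}\bigg).
\]
Granting this, partial summation recovers the claimed statement: writing
\[
\sum_{\substack{p \le x \\ p \equiv 1 \mod q}} \frac{1}{p} = \frac{T(x)}{\log x} + \int_2^x \frac{T(t)}{t \log^2 t}\, dt
\]
and substituting the asymptotic for $T(t)$ gives the main term $\log\log x/\phi(q)$ together with an error of size $O(\log q/\phi(q))$, as required.

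To establish the weighted estimate, I would expand using orthogonality of Dirichlet characters modulo $q$:
\[
T(x) = \frac{1}{\phi(q)} \sum_{\chi \mod q} \sum_{\substack{p \leq x \\ p \nmid q}} \frac{\chi(p)\log p}{p}.
\]
The principal character $\chi_0$ contributes $\phi(q)^{-1}\sum_{p \le x,\, p \nmid q}\log(p)/p$, which by the classical Mertens estimate $\sum_{p \le x}\log(p)/p = \log x + O(1)$ equals $\log x/\phi(q) + O(1/\phi(q)) - \phi(q)^{-1}\sum_{p \mid q} \log(p)/p$; since the subtracted sum is trivially at most $\log q$, the principal character produces the stated main term together with an error of acceptable size. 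Each non-principal $\chi$ contributes an inner sum that, after extending to prime powers at negligible cost, becomes $\sum_{n\le x} \chi(n)\Lambda(n)/n + O(1)$.

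The main obstacle is the uniform bound $\sum_{n\le x} \chi(n)\Lambda(n)/n \ll \log q$ valid for every non-principal $\chi \mod q$ throughout the full range $2 \le q \le x$. For moderate $q$, this follows from Siegel--Walfisz via the explicit formula and the standard zero-free region for Dirichlet $L$-functions, with potential exceptional zeros absorbed (ineffectively) by Siegel's theorem. For $q$ approaching $x$, one can fall back on the Brun--Titchmarsh inequality to obtain an upper bound of the correct shape directly. Granting this uniform character-sum bound, summing over the $\phi(q)-1$ non-principal characters and dividing by $\phi(q)$ gives a total contribution of $O(\log q/\phi(q))$, completing the derivation of the Chebyshev-weighted estimate and therefore the lemma.
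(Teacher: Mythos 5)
The paper does not prove Lemma~\ref{mertenspartial}; it is imported verbatim from the references \cite{nor} and \cite{pom}. Your proposal attempts a proof via orthogonality of Dirichlet characters, which is a natural idea, but the argument has a decisive arithmetic gap.

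You bound each non-principal character sum by $\sum_{n\le x}\chi(n)\Lambda(n)/n \ll \log q$ and then assert that ``summing over the $\phi(q)-1$ non-principal characters and dividing by $\phi(q)$ gives a total contribution of $O(\log q/\phi(q))$.'' That step is off by a full factor of $\phi(q)$: under your per-character bound the total over the $\phi(q)-1$ non-principal characters is $O(\phi(q)\log q)$, and dividing by $\phi(q)$ leaves $O(\log q)$, \emph{not} $O(\log q/\phi(q))$. To recover the $\log q/\phi(q)$ saving through this route you would need the individual character sums to be $O(\log q/\phi(q))$ on average over $\chi$ --- a dramatically stronger statement that orthogonality gives no reason to expect. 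Consequently your argument delivers at best
\begin{equation*}
\sum_{\substack{p\le x \\ p\equiv 1 \mod q}}\frac{\log p}{p} = \frac{\log x}{\phi(q)} + O(\log q),
\end{equation*}
and feeding this weaker estimate into your partial-summation identity produces an error contribution
\begin{equation*}
O(\log q)\int_q^x\frac{dt}{t\log^2 t} + O\bigg(\frac{\log q}{\log x}\bigg) = O(1),
\end{equation*}
so the end result is $\sum_{p\le x,\,p\equiv1\mod q}1/p = \log\log x/\phi(q) + O(1)$, which is far weaker than what the lemma asserts as soon as $\phi(q)$ exceeds $\log q$. The $1/\phi(q)$ in the error term is genuinely used later in the paper (for instance in the proofs of Lemmas~\ref{triple threat lemma} and~\ref{truncate at A lemma}, where the sums over prime powers $p^j \le Y$ only converge acceptably because of that factor), so the loss is not cosmetic. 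There is also a secondary issue: the uniform per-character bound $\sum_{n\le x}\chi(n)\Lambda(n)/n \ll \log q$ for \emph{all} $2\le q\le x$ is not a consequence of Siegel--Walfisz (whose range is $q\le(\log x)^A$), and in the range $t\ll\exp((\log q)^2)$ the classical zero-free region gives no cancellation, so the best one gets there is $O((\log q)^2)$ per character anyway. The proofs in \cite{nor} and \cite{pom} avoid characters entirely, keeping the structure $\pi(t;q,1)=\li(t)/\phi(q)+\text{error}$ explicit and splitting according to whether $q\le\log x$ (where a Siegel--Walfisz-type input furnishes the asymptotic) or $q>\log x$ (where $\log q>\log\log x$ so the error dominates the main term and a Brun--Titchmarsh upper bound suffices); this is how the $1/\phi(q)$ is retained throughout.
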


For the rest of this section, we set
\begin{align*}
W &= \log\log\log x \\
X &= (\log\log x)^{1/2}(\log\log\log x)^2 \\
Y &= (\log\log x)^2.
\end{align*}
(Note that this definition of $X$ is the same as in Proposition~\ref{log G as a polynomial intro version}.)

\begin{lemma} \label{triple threat lemma}
For all but $O(x/W)$ integers $n\le x$,
\begin{multline}  \label{four expressions}
\max\bigg\{ \sum_{\substack{p\le Y \\ \lambda_p(n)\ge1}} \log p,\, \sum_{p\le Y} \lambda_p(n)\log p ,\, \sum_{p\le Y} \nu_p(n)\log p ,\, \sum_{p^j\le Y} \omega_{p^j}(n)\log p \bigg\} \\
\ll \log\log x \cdot (\log\log\log x)^2.
\end{multline}
\end{lemma}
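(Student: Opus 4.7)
The plan is to bound each of the four sums in~\eqref{four expressions} on average over $n\le x$, and apply Markov's inequality with threshold $(\log\log x)W^2$ (where $W=\log\log\log x$) to produce an exceptional set of size $O(x/W)$ for each; a union bound over the four sums finishes the proof. In every case the mean will come out $O(W\log\log x)$, smaller than the threshold by exactly the factor of $W$ that Markov needs to yield $x/W$.

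For the $\nu_p$-sum, interchanging summation and using Mertens' first theorem gives
\[
\sum_{n\le x}\sum_{p\le Y}\nu_p(n)\log p = x\sum_{p\le Y}\frac{\log p}{p-1}+O(Y\log x)\ll xW,
\]
since $Y\log x=o(x)$. For the $\omega_{p^j}$-sum, swapping summation and applying Lemma~\ref{mertenspartial} bounds each inner sum by $O(x\log\log x/\phi(p^j))$; then $\sum_{p^j\le Y}\Lambda(p^j)/\phi(p^j)\ll\log Y\ll W$ follows by partial summation from the PNT (using $\phi(q)\asymp q$ on prime powers), yielding mean $O(xW\log\log x)$.

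For the $\lambda_p$-sum I would apply~\eqref{lambdapn bound} to write $\lambda_p(n)\log p\le\nu_p(n)\log p+\log p\sum_{j\ge1}\omega_{p^j}(n)$; the first term matches the $\nu_p$-sum above, and the second is the $\omega_{p^j}$-sum extended to all $j\ge 1$. The unrestricted $j$-range is the only point requiring care, but the tail is harmless because $\sum_{j\ge 1}1/\phi(p^j)\ll 1/p$, so the same mean $O(xW\log\log x)$ applies. Lastly, the first sum is bounded term-by-term by the $\lambda_p$-sum, since $\lambda_p(n)\ge 1$ forces $\lambda_p(n)\log p\ge\log p$. The argument is essentially bookkeeping; the rapid geometric decay of $1/\phi(p^j)$ in $j$ makes every estimate routine.
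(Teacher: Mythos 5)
Your proposal is correct and follows essentially the same approach as the paper: interchange the order of summation, apply Mertens's theorem and Lemma~\ref{mertenspartial} to bound the averages, and use Markov's inequality. One small bonus of your presentation is that you explicitly address the unrestricted range $j\ge 1$ arising from the bound~\eqref{lambdapn bound} (via $\sum_{j\ge 1}1/\phi(p^j)\ll 1/p$), whereas the paper's phrase that the second sum is ``bounded above by the maximum of the third and fourth'' glosses over the fact that the fourth sum only runs over $p^j\le Y$.
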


\begin{proof}
The first sum on the left-hand side of equation~\eqref{four expressions} is clearly bounded above by the second sum; and this second sum, by equation~\eqref{lambdapn bound}, is bounded above by the maximum of third and fourth sums on the left-hand side. It therefore suffices to show that
\begin{equation} \label{summed both over n}
\sum_{n\le x} \sum_{p\le Y} \nu_p(n)\log p + \sum_{n\le x} \sum_{p^j\le Y} \omega_{p^j}(n)\log p \ll x\log\log x \cdot \log Y,
\end{equation}
for then there can be no more than $O(x/W)$ integers $n\le x$ for which either of the two summands exceeds $\log\log x\cdot\log Y\cdot W = \log\log x \cdot (\log\log\log x)^2$.

The first sum on the left-hand side of equation~\eqref{summed both over n} can be bounded simply:
\begin{align*}
\sum_{n\le x} \sum_{p\le Y} \nu_p(n)\log p = \sum_{n\le x} \sum_{p\le Y} \log p \sum_{\substack{j\ge1 \\ p^j\mid n}} 1 &= \sum_{p\le Y} \log p \sum_{j\ge1} \sum_{\substack{n\le x \\ p^j\mid n}} 1 \\
&\le \sum_{p\le Y} \log p \sum_{j\ge1} \frac x{p^j} = x \sum_{p\le Y} \frac{\log p}{p-1} \ll x\log Y
\end{align*}
by partial summation, which is more than sufficient. As for the second sum on the left-hand side of equation~\eqref{summed both over n},
\begin{align*}
\sum_{n\le x} \sum_{p^j\le Y} \omega_{p^j}(n)\log p = \sum_{n\le x} \sum_{p^j\le Y} \log p \sum_{\substack{q\mid n \\ q\equiv1\mod{p^j}}} 1 &= \sum_{p^j\le Y} \log p \sum_{\substack{q\le x \\ q\equiv1\mod{p^j}}} \sum_{\substack{n\le x \\ q\mid n}} 1 \\
&\le \sum_{p^j\le Y} \log p \sum_{\substack{q\le x \\ q\equiv1\mod{p^j}}} \frac xq.
\end{align*}
Since $Y < \log x$ when $x$ is large enough, Lemma~\ref{mertenspartial} yields
\begin{align*}
\sum_{n\le x} \sum_{p^j\le Y} \omega_{p^j}(n)\log p &\le x \sum_{p^j\le Y} \log p \bigg( \frac{\log\log x}{\phi(p^j)} + O\bigg( \frac{\log(p^j)}{\phi(p^j)} \bigg) \bigg) \\
&\ll x\log\log x \sum_{p^j\le Y} \frac{\log p}{\phi(p^j)} + x \sum_{p^j\le Y} \frac{\log^2(p^j)}{\phi(p^j)} \\
&\ll x\log\log x \cdot \log Y + x\log^2 Y
\end{align*}
by partial summation, completing the verification of the bound~\eqref{summed both over n}.
\end{proof}

The following lemma is very similar to known results (see~\cite{ep85} for example) on the scarcity of numbers $n$ for which $\phi(n)$ is divisible by the square of a large prime.

\begin{lemma}  \label{truncate at A lemma}
All but $O(x/W)$ integers $n\le x$ have both $\lambda_p(n)\le1$ for all $p>Y$ and $\omega_{p^j}(n)\le1$ for all $p^j>Y$. 
\end{lemma}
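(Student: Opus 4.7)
The plan is to cover the exceptional set by three explicit bad sets and bound each by a standard Mertens-type count. Since $Y>2$ for all large $x$, only odd primes $p$ are relevant, and using the formula $\lambda_p(n)=\max\{\nu_p(n)-1,\max\{j:\omega_{p^j}(n)\ge1\}\}$ from the proof of Lemma~\ref{p Sylow structure lemma}, the event $\lambda_p(n)\ge2$ for some $p>Y$ forces either $p^3\mid n$ or $\omega_{p^2}(n)\ge1$ for some such $p$. Accordingly, I would decompose the exceptional set into $B_1\cup B_2\cup B_3$, where $B_1$ collects $n\le x$ divisible by $p^3$ for some prime $p>Y$; $B_2$ collects $n\le x$ with some prime divisor $q\equiv1\pmod{p^2}$ for some prime $p>Y$; and $B_3$ collects $n\le x$ with two distinct prime divisors $q_1\ne q_2$, both $\equiv1\pmod{p^j}$, for some prime power $p^j>Y$.

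I expect $B_1$ and $B_2$ to be routine: the first gives $|B_1|\le\sum_{p>Y}x/p^3\ll x/(Y^2\log Y)$, and for $B_2$, summing over divisibility pairs $q\mid n\le x$ and applying Lemma~\ref{mertenspartial} to the inner sum yields $|B_2|\ll x\log\log x/(Y\log Y)$. With $Y=(\log\log x)^2$, both are $o(x/W)$ with room to spare.

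The main obstacle is $B_3$, because the natural squared-sum bound turns out to be tight at the target quality $x/W$. I would bound the number of multiples of $q_1q_2$ crudely by $x/(q_1q_2)$, drop the distinctness restriction, and invoke Lemma~\ref{mertenspartial} to obtain
\[
|B_3|\le x\sum_{Y<p^j\le x}\bigg(\sum_{\substack{q\le x\\ q\equiv1\pmod{p^j}}}\frac{1}{q}\bigg)^{2}\ll x(\log\log x)^2\sum_{p^j>Y}\frac{1}{\phi(p^j)^2},
\]
with the cross and $(\log p^j)^2$ contributions absorbed into lower-order terms by partial summation. The crucial step is then to show $\sum_{p^j>Y}1/p^{2j}\ll 1/(Y\log Y)$. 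I would do this by splitting according to whether $p>Y$ (so $j=1$ is the smallest admissible exponent and the prime number theorem gives $\sum_{p>Y}1/p^2\sim 1/(Y\log Y)$) or $p\le Y$ (so the smallest admissible $j$ forces $p^{2j}>Y^2$, and then $\pi(Y)/Y^2\ll 1/(Y\log Y)$ suffices). Substituting $Y=(\log\log x)^2$ converts this into
\[
|B_3|\ll \frac{x(\log\log x)^2}{Y\log Y}=\frac{x}{2\log\log\log x}=O(x/W),
\]
and combining the three bounds finishes the proof.
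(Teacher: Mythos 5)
Your proposal is correct and follows essentially the same route as the paper: the paper also reduces $\lambda_p(n)\ge 2$ (for $p>Y$) to the two cases $p^3\mid n$ or $q\mid n$ with $q\equiv 1\pmod{p^2}$, bounds them via Lemma~\ref{mertenspartial}, and then treats $\omega_{p^j}(n)\ge 2$ (for $p^j>Y$) by the same squared-Mertens count. Your more explicit split of $\sum_{p^j>Y}p^{-2j}$ into the $p>Y$ and $p\le Y$ ranges is a mild variant of the paper's appeal to partial summation, but the structure and the resulting bounds are identical.
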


\begin{proof}
First, fix a prime $p>Y$. If $\lambda_p(n)\ge2$, then either $p^3\mid n$ or there exists a prime $q\mid n$ with $q\equiv1\mod{p^2}$; the number of integers $n\le x$ satisfying one of these two conditions is at most
\[
\frac x{p^3} + \sum_{\substack{q\le x \\ q\equiv1\mod{p^2}}} \frac xq.
\]
Therefore the total number of integers $n\le x$ for which $\lambda_p(n)\ge2$ for even a single prime $p>Y$ is, by Lemma~\ref{mertenspartial}, at most
\begin{align*}
\sum_{Y<p\le x^{1/3}} \frac x{p^3} &{}+ \sum_{Y<p\le\sqrt x} \sum_{\substack{q\le x \\ q\equiv1\mod{p^2}}} \frac xq \\
&< \sum_{p>Y} \frac x{p^3} + x \sum_{p>Y} \bigg( \frac{\log\log x}{\phi(p^2)} + O\bigg( \frac{\log p^2}{\phi(p^2)} \bigg) \bigg) \\
&\ll \frac x{Y^2\log Y} + x \bigg( \frac{\log\log x}{Y\log Y} + \frac1Y \bigg)
\end{align*}
by partial summation; this is an acceptably small bound for the number of such $n\le x$, given our choices of $Y$ and~$W$.

Similarly, fix a prime power $p^j>Y$. If $\omega_{p^j}(n) \ge 2$, then there exist two distinct primes $q$ and $r$ dividing $n$ such that $q\equiv r\equiv1\mod{p^j}$. The number of integers $n\le x$ satisfying this condition is, by Lemma~\ref{mertenspartial}, at most
\begin{align*}
\sum_{\substack{q<r\le x \\ q\equiv r\equiv1\mod{p^j}}} \frac x{qr} &< \frac x2 \bigg( \sum_{\substack{q\le x \\ q\equiv1\mod{p^j}}} \frac 1q \bigg)^2 \\
&\ll x \bigg( \frac{\log\log x}{\phi(p^j)} + \frac{\log p^j}{\phi(p^j)} \bigg)^2 \ll x \bigg( \frac{(\log\log x)^2}{(p^j)^2} + \frac{(\log p^j)^2}{(p^j)^2} \bigg).
\end{align*}
Therefore the total number of integers $n\le x$ for which $\omega_{p^j}(n) \ge 2$ for even a single prime power $p^j>Y$ is
\begin{align*}
\ll \sum_{p^j>Y} x \bigg( \frac{(\log\log x)^2}{(p^j)^2} + \frac{(\log p^j)^2}{(p^j)^2} \bigg) \ll x \bigg( \frac{(\log\log x)^2}{Y\log Y} + \frac{\log Y}Y \bigg) \ll \frac xW
\end{align*}
again by partial summation.
\end{proof}

We now have collected enough results to obtain a not-quite-final version of Proposition~\ref{log G as a polynomial intro version} where, for the moment, the range of summation ($p^j\le Y$ rather than $p^j\le X$) is longer than we would like.

\begin{lemma}  \label{Y instead of X lemma}
For all but $O(x/W)$ integers $n\le x$,
\begin{equation*}
\log G(n) = \omega(\phi(n)) \log 2 + \tfrac14 \sum_{p^j\le Y} \omega_{p^j}(n)^2 \log p + O(\log\log x \cdot (\log\log\log x)^2).
\end{equation*}
\end{lemma}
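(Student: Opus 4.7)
The plan is to start from $\log G(n) = \sum_{p\mid\phi(n)}\log G_p(n)$ and treat the primes $p>Y$ and $p\le Y$ separately. Throughout, I work on the good set of $n\le x$ obeying the conclusions of Lemmas~\ref{triple threat lemma} and~\ref{truncate at A lemma}, further restricted by the condition that no prime $p>Y$ has $p^2\mid n$; the latter exceptional set has cardinality $\sum_{p>Y}x/p^2\ll x/(Y\log Y)=o(x/W)$ by partial summation.

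For a prime $p>Y$ dividing $\phi(n)$ in the good set, Lemma~\ref{truncate at A lemma} gives $\lambda_p(n)\le 1$ and $\omega_p(n)\le 1$, while the assumption $p^2\nmid n$ forces $\ovomega_p(n)=\omega_p(n)$ (since $p\ge 3$). Combined with $p\mid\phi(n)\Leftrightarrow\ovomega_p(n)\ge 1$, these relations force $\ovomega_p(n)=1$ and $\ovomega_{p^j}(n)=0$ for $j\ge 2$, so $p\|\phi(n)$ by~\eqref{sum of its parts}; the moreover clause of Proposition~\ref{translating to Wn lemma} then gives $\log G_p(n)=\log 2$ exactly. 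The large primes thus contribute $\omega_{>Y}(\phi(n))\log 2$, where $\omega_{>Y}(\phi(n))\mathrel{:=}\#\{p>Y\colon p\mid\phi(n)\}$.

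For each small prime $p\le Y$ with $p\mid\phi(n)$, Proposition~\ref{translating to Wn lemma} yields
\[
\log G_p(n)=\frac{\log p}{4}\sum_{j=1}^{\lambda_p(n)}\ovomega_{p^j}(n)^2+O(\lambda_p(n)\log p),
\]
and the total error is $\ll\sum_{p\le Y}\lambda_p(n)\log p\ll\log\log x\cdot(\log\log\log x)^2$ by Lemma~\ref{triple threat lemma}. Since $\ovomega_{p^j}(n)=\omega_{p^j}(n)+O(1)$, expanding $\ovomega_{p^j}(n)^2=\omega_{p^j}(n)^2+O(\omega_{p^j}(n))+O(1)$ introduces corrections of size $O\bigl(\sum_{p^j\le Y}\omega_{p^j}(n)\log p\bigr)+O\bigl(\sum_{p\le Y}\lambda_p(n)\log p\bigr)$, both absorbable by Lemma~\ref{triple threat lemma}. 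I then swap the range $j\le\lambda_p(n)$ for $p^j\le Y$: added terms with $j>\lambda_p(n)$ vanish (since $\omega_{p^j}(n)\ge 1$ forces $p^j\mid\lambda(n)$), while removed terms with $p\le Y<p^j$ and $j\le\lambda_p(n)$ have $\omega_{p^j}(n)\le 1$ by Lemma~\ref{truncate at A lemma} and contribute $\ll\sum_{p\le Y}\lambda_p(n)\log p$.

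Assembling these pieces gives $\log G(n)=\omega_{>Y}(\phi(n))\log 2+\tfrac14\sum_{p^j\le Y}\omega_{p^j}(n)^2\log p+O(\log\log x\cdot(\log\log\log x)^2)$. The final step is to replace $\omega_{>Y}(\phi(n))$ by $\omega(\phi(n))$, at a cost of $\omega_{\le Y}(\phi(n))\log 2$. Applying Lemma~\ref{mertenspartial} prime by prime,
\[
\sum_{n\le x}\omega_{\le Y}(\phi(n))=\sum_{p\le Y}\#\{n\le x\colon p\mid\phi(n)\}\ll x\log\log x\sum_{p\le Y}\frac{1}{\phi(p)}\ll x\log\log x\cdot\log\log Y,
\]
so Markov's inequality shows $\omega_{\le Y}(\phi(n))\ll\log\log x\cdot(\log\log\log x)^2$ for all but $O(x/W)$ integers $n\le x$. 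The main obstacle is the bookkeeping: at each step one must verify that the accumulated errors stay within $O(\log\log x\cdot(\log\log\log x)^2)$. A key observation is that one must not apply Proposition~\ref{translating to Wn lemma} to large primes, since the per-prime error $O(\lambda_p(n)\log p)$ summed over all $p\mid\phi(n)$ would balloon to $O(\log x)$ and swamp the main term.
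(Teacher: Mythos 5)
Your proof is correct, and it takes a genuinely different route from the paper's. The paper decomposes $\sum_{p\mid\phi(n)}\log G_p(n)$ according to whether $\lambda_p(n)\omega_p(n)=1$ or $\ge 2$, which lets the full term $\omega(\phi(n))\log 2$ emerge at once as $\sum_{p\mid\phi(n)}\log 2$ minus a small correction, and then uses Lemmas~\ref{truncate at A lemma} and~\ref{triple threat lemma} to restrict to $p\le Y$ and replace $\ovomega$ by $\omega$. You instead partition the primes by size: for $p>Y$ on the good set (augmented by your extra, correctly costed restriction that no $p>Y$ has $p^2\mid n$) you pin down $p\|\phi(n)$ and hence $\log G_p(n)=\log 2$ exactly, harvesting $\omega_{>Y}(\phi(n))\log 2$ with no error at all; for $p\le Y$ you run essentially the same $\ovomega\to\omega$ and range-swap machinery the paper uses. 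The price of your decomposition is one additional step the paper never needs: replacing $\omega_{>Y}(\phi(n))$ by $\omega(\phi(n))$, which you handle with a first-moment bound on $\omega_{\le Y}(\phi(n))$ plus Markov. (You could also get this for free by noting $\omega_{\le Y}(\phi(n))\log 2\le\sum_{p\le Y,\,\lambda_p(n)\ge1}\log p$, which is the first expression in Lemma~\ref{triple threat lemma} and already controlled on the good set.) What your version buys is conceptual cleanliness: the split by prime size makes it manifest why the per-prime error $O(\lambda_p(n)\log p)$ can only be afforded for small $p$, and your handling of the large primes via the exact ``moreover'' clause of Proposition~\ref{translating to Wn lemma}, after explicitly forcing $p^2\nmid n$, sidesteps the delicate boundary cases ($p^2\parallel n$ with $\omega_p(n)\in\{0,1\}$) that the paper's condition $\lambda_p(n)\omega_p(n)=1$ does not capture precisely.
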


\begin{proof}
By Proposition~\ref{gpn as a polynomial},
\begin{align*}
\log G(n) &= \sum_{p\mid\phi(n)} \log G_p(n) \\
&= \sum_{\substack{p\mid\phi(n) \\ \lambda_p(n)\omega_p(n)=1}} \log 2 + \sum_{\substack{p\mid\phi(n) \\ \lambda_p(n)\omega_p(n)\ge2}} \bigg( O(\lambda_p(n) \log p) + \frac{\log p}4 \sum_{j=1}^{\lambda_p(n)} \ovomega_{p^j}(n)^2 \bigg) \\
&= \sum_{\substack{p\mid\phi(n) \\ \lambda_p(n)\omega_p(n)=1}} \log 2 + O\bigg( \sum_{\substack{p\mid\phi(n) \\ \lambda_p(n)\omega_p(n)\ge2}}\lambda_p(n) \log p \bigg) + \sum_{\substack{p\mid\phi(n) \\ \lambda_p(n)\omega_p(n)\ge2 \\ j\le\lambda_p(n)}} \tfrac14 \ovomega_{p^j}(n)^2 \Lambda(p^j) \\
&= \sum_{p\mid\phi(n)} \log 2 + O\bigg( \sum_{\substack{p\mid\phi(n) \\ \lambda_p(n)\omega_p(n)\ge2}} (\log2 + \lambda_p(n) \log p) \bigg) + \sum_{\substack{p\mid\phi(n) \\ \lambda_p(n)\omega_p(n)\ge2 \\ j\le\lambda_p(n)}} \tfrac14 \ovomega_{p^j}(n)^2 \Lambda(p^j).
\end{align*}
Since $\lambda_p(n) \ge1$ for all $p\mid\phi(n)$,
\[
\sum_{\substack{p\mid\phi(n) \\ \lambda_p(n)\omega_p(n)\ge2}} (\log2 + \lambda_p(n) \log p) \ll \sum_{\substack{p\mid\phi(n) \\ \lambda_p(n)\omega_p(n)\ge2}} \lambda_p(n) \log p.
\]
Furthermore, by Lemma~\ref{truncate at A lemma}, for all but $O(x/W)$ integers $n\le x$ we never have $\lambda_p(n)\ge2$ for any $p>Y$; for these non-exceptional integers, we can therefore incorporate the condition $p\le Y$ into the relevant sums, yielding
\begin{align*}
\log G(n) = \sum_{p\mid\phi(n)} \log 2 + O\bigg( \sum_{p\le Y} \lambda_p(n) \log p \bigg) + \sum_{\substack{p\le Y \\ p\mid\phi(n) \\ \lambda_p(n)\omega_p(n)\ge2 \\ j\le\lambda_p(n)}} \tfrac14 \ovomega_{p^j}(n)^2 \Lambda(p^j).
\end{align*}
In this last sum, for all but $O(x/W)$ integers $n\le x$, Lemma~\ref{truncate at A lemma} also implies that $\omega_{p^j}(n) \le 1$ (and thus $\ovomega_{p^j}(n) \ll 1$) for all $p^j > Y$, which implies
\[
\sum_{\substack{p\le Y \\ p\mid\phi(n) \\ \lambda_p(n)\omega_p(n)\ge2 \\ j\le \lambda_p(n) \\ p^j > Y}} \ovomega_{p^j}(n)^2 \Lambda(p^j) \ll \sum_{\substack{p\le Y \\ p\mid\phi(n) \\ \lambda_p(n)\omega_p(n)\ge2 \\ j\le \lambda_p(n) \\ p^j > Y}} 1\cdot \Lambda(p^j) \le \sum_{\substack{p\le Y \\ j\le \lambda_p(n)}} 1\cdot \Lambda(p^j) = \sum_{p\le Y} \lambda_p(n)\log p;
\]
therefore for these non-exceptional integers, we can strengthen the condition $p\le Y$ to $p^j\le Y$ in the last sum to obtain
\begin{align}
\log G(n) &= \sum_{p\mid\phi(n)} \log 2 + O\bigg( \sum_{p\le Y} \lambda_p(n) \log p \bigg) + \sum_{\substack{p\mid\phi(n) \\ \lambda_p(n)\omega_p(n)\ge2 \\ j\le \lambda_p(n) \\ p^j\le Y}} \tfrac14 \ovomega_{p^j}(n)^2 \Lambda(p^j) \notag \\
&= \omega(\phi(n)) \log 2 + \sum_{\substack{p\mid\phi(n) \\ \lambda_p(n)\omega_p(n)\ge2 \\ j\le \lambda_p(n) \\ p^j\le Y}} \tfrac14 \ovomega_{p^j}(n)^2 \Lambda(p^j) + O\big( (\log\log x)^2\log\log\log x \big),  \label{bars present}
\end{align}
where the second equality is valid for all but $O(x/W)$ integers $n\le x$ by Lemma~\ref{triple threat lemma}. From the definition of $\ovomega_{p^j}$, we know that $\ovomega_{p^j}(n) = \omega_{p^j}(n) + O(1)$, and consequently $\ovomega_{p^j}(n)^2 = \omega_{p^j}(n)^2 + O\big( \omega_{p^j}(n)+1 \big)$. In particular,
\begin{align*}
\sum_{\substack{p\mid\phi(n) \\ \lambda_p(n)\omega_p(n)\ge2 \\ j\le \lambda_p(n) \\ p^j\le Y}} \tfrac14 \ovomega_{p^j}(n)^2 \log p &= \sum_{\substack{p\mid\phi(n) \\ \lambda_p(n)\omega_p(n)\ge2 \\ j\le \lambda_p(n) \\ p^j\le Y}} \big( \tfrac14 \omega_{p^j}(n)^2 + O( \omega_{p^j}(n)+1 ) \big) \log p \\
&= \sum_{\substack{p\mid\phi(n) \\ \lambda_p(n)\omega_p(n)\ge2 \\ j\le \lambda_p(n) \\ p^j \le Y}} \tfrac14 \omega_{p^j}(n)^2 \log p + O\bigg( \sum_{p^j\le Y} \omega_{p^j}(n) \log p + \sum_{p\le Y} \lambda_p(n) \log p \bigg),
\end{align*}
and by Lemma~\ref{triple threat lemma} this error term is also $\ll \log\log x \cdot (\log\log\log x)^2$ for all but $O(x/W)$ integers $n\le x$. Therefore we may modify equation~\eqref{bars present} to
\begin{equation*}
\log G(n) = \omega(\phi(n)) \log 2 + \sum_{\substack{p\mid\phi(n) \\ \lambda_p(n)\omega_p(n)\ge2 \\ j\le \lambda_p(n) \\ p^j\le Y}} \tfrac14 \omega_{p^j}(n)^2 \log p + O(\log\log x \cdot (\log\log\log x)^2).
\end{equation*}
In this sum, we may remove the condition of summation $\lambda_p(n)\omega_p(n)\ge2$ at a cost of at most $\sum_{p\le Y} \frac14\lambda_p(n)\log p$, which again is negligible for all but $O(x/W)$ integers $n\le x$ by Lemma~\ref{triple threat lemma}. Since $\omega_{p^j}(n)=0$ whenever $p\nmid\phi(n)$ or $j>\lambda_p(n)$, we may remove the conditions $p\mid\phi(n)$ and $j\le\lambda_p(n)$ as well. This establishes the lemma.
\end{proof}

Finally, we show that we can truncate the range of summation in the above lemma from $p^j \le Y$ down to $p^j \leq X$ at the cost of a larger error term, thereby obtaining Proposition~\ref{log G as a polynomial intro version}.

\begin{proof}[Proof of Proposition~\ref{log G as a polynomial intro version}]
In the notation of Proposition~\ref{log G as a polynomial intro version} and of this section, Lemma~\ref{Y instead of X lemma} states that for all but $x/W$ integers $n\le x$,
\begin{equation*}
\log G(n) = P_n(x) + \tfrac14 \sum_{X < q \le Y} \omega_q(n)^2 \Lambda(q) + O(\log\log x\cdot(\log\log\log x)^2).
\end{equation*}
Therefore it suffices to show that for all but $x/W$ integers $n\le x$, the sum in the equation above is $\ll (\log\log x)^{3/2}/\log\log\log x$. In turn, this statement can be established by showing that
\begin{equation}  \label{log G as a polynomial estimate}
\sum_{n\le x} \sum_{X < q \le Y} \omega_q(n)^2 \Lambda(q) \ll \frac{x(\log\log x)^{3/2}}{(\log\log\log x)^2}.
\end{equation}

We may write
\begin{align*}
\sum_{n \leq x} \sum_{X < q \leq Y} \omega_q(n)^2 \Lambda(q) &= \sum_{n \leq x} \sum_{X < q \leq Y} \Lambda(q) \bigg( \sum_{\substack{p \mid n \\ p \equiv 1 \mod q}} 1 \bigg)^2 \\
&= \sum_{n \leq x} \sum_{X < q \leq Y} \Lambda(q) \sum_{\substack{p_1,p_2 \mid n \\ p_1 \equiv p_2 \equiv 1 \mod q}} 1 \\
&= \sum_{X < q \leq Y} \Lambda(q) \sum_{\substack{p_1 \equiv p_2 \equiv 1 \mod q}} \sum_{\substack{n \leq x \\ p_1,p_2 \mid n}} 1 \\
&= \sum_{X < q \leq Y} \Lambda(q) \sum_{\substack{p \equiv 1 \mod q}} \sum_{\substack{n \leq x \\ p \mid n}} 1 +  \sum_{X < q \leq Y} \Lambda(q) \sum_{\substack{p_1 \equiv p_2 \equiv 1 \mod q \\ p_1\ne p_2}} \sum_{\substack{n \leq x \\ p_1p_2 \mid n}} 1.
\end{align*}
For the first sum, Lemma~\ref{mertenspartial} gives
\begin{align*}
\sum_{n \leq x} \sum_{X < q \leq Y} \Lambda(q) \sum_{\substack{p \mid n \\ p \equiv 1 \mod q}} 1 &= \sum_{X < q \leq Y} \Lambda(q) \sum_{\substack{p \leq x \\ p \equiv 1 \mod q}} \sum_{\substack{n \leq x \\ p \mid n}} 1 \\
&\ll x \sum_{X < q \leq Y} \Lambda(q) \sum_{\substack{p \leq x \\ p \equiv 1 \mod q}} \frac{1}{p} \\
&\ll x\log\log x \sum_{X < q \leq Y} \frac{\Lambda(q)}{\phi(q)} \ll x\log\log x\cdot \log Y
\end{align*}
by partial summation. For the second sum, we argue similarly:
\begin{align*}
\sum_{n \leq x} \sum_{X < q \leq Y} \Lambda(q) \sum_{\substack{p_1, p_2 \leq x \\ p_1p_2 \mid n \\ p_1 \equiv p_2 \equiv 1 \mod q}} 1 &\ll \sum_{X < q \leq Y} \Lambda(q) \sum_{\substack{p_1, p_2 \\ p_1 \equiv p_2 \equiv 1 \mod q}} \frac{x}{p_1p_2} \\
&\ll x(\log\log x)^2 \sum_{q > X} \frac{\Lambda(q)}{q^2} \ll x(\log\log x)^{2}/X = \frac{x(\log\log x)^{3/2}}{(\log\log\log x)^2}.
\end{align*}
These last two upper bounds establish the estimate~\eqref{log G as a polynomial estimate} and therefore the proposition.
\end{proof}

\section{Notation and setup}\label{notation and setup}

In this section, we prepare some notation we will need to prove Proposition~\ref{moment asymptotics}. At the end of the section, we outline the main stages of the proof, which span the next several sections.

\begin{definition}  \label{omega0 def}
Define the function
\[
\omega_0(n) = \omega(\phi(n)).
\]
Comparing with Definition~\ref{omega q def} shows that this notation $\omega_0$ is mathematically dubious, but it will be typographically convenient. For example, we note that $\omega_q(p) \ll \log z$ for every $q\ge0$ and every prime $p\le z$: when $q\ge2$ this is obvious from Definition~\ref{omega q def}, while for $q=0$ we have $\omega_0(p) = \omega(p-1) \le \log(p-1)/\log 2$.

In this notation, the definitions~\eqref{Qnx definition} and~\eqref{Dx definition} become
\begin{equation} \label{Pn D new notation}
\begin{split}
P_n(x) &= \log 2 \cdot \omega_0(n) + \frac14 \sum_{2\le q\le X} \omega_{q}(n)^2 \Lambda(q) \\
D(x) &= \log 2 \cdot \mu(\omega_0) + \frac{1}{4}\sum_{2\le q \leq X} \mu(\omega_q)^2 \Lambda(q),
\end{split}
\end{equation}
where (by equation~\eqref{mu definition}) we may simply write
\[
\mu(\omega_q) = \sum_{p \leq x} \frac{\omega_q(p)}{p}
\]
for every $q\ge0$. We shall continue to write ranges of summation over $q$ as either $2\le q$, when the sum excludes $q=0$, or $0\le q$, when the sum includes~$q=0$.
\end{definition}

By way of intuition, the typical size of $\omega_0(n)$ is $\frac12(\log\log x)^2$; this is quite a bit larger than the typical size of any $\omega_q(n)$ with $q\ge2$ but, on the other hand, these $\omega_q$ typically occur squared, while the function $\omega_0$ typically occurs to the first power. Consequently, the contribution of the two types of function to the typical size of $P_n(x)$ is of the same order of magnitude. The typical size of $P_n(x)$, as $n$ varies over integers up to $x$, is asymptotically $D(x)$ (due essentially to ``linearity of expectation''), and consequently the distribution of the difference $P_n(x)-D(x)$ will be the main focus of our investigation.

\begin{definition}  \label{fF def}
For any prime $p$, define the function
\[
f_p(a) = \begin{cases} 1-1/p, &\text{if } p\mid a, \\ -1/p, &\text{if } p \nmid a.\end{cases}
\]
We extend this function completely multiplicatively in the subscript (not, as might be expected, in the argument): for any positive integer $r$, we set
\[
f_r(a) = \prod_{p^\alpha \parallel r} f_p(a)^\alpha.
\]
Finally, we set
\[
F_{\omega_q}(a) = \sum_{p\leq x} \omega_q(p) f_p(a).
\]
\end{definition}

Notice that, for any $n \leq x$, we have the exact identity
\begin{equation}\label{function equals mean plus F}
\omega_q(n) = \mu(\omega_q) + F_{\omega_q}(n).
\end{equation}
We have thereby decomposed an additive function into its mean value on the integers up to $x$ (which is asymptotically equal to $\mu(\omega_q)$) and a term $F_{\omega_q}(n)$ that oscillates as $n$ varies. This innovation, due to Granville and Soundararajan~\cite{gs07}, allows for a more direct identification of the main terms that arise in the calculations of the $h$th moments
\[
M_h(x) = \sum_{n \leq x} (P_n(x) - D(x))^h
\]
of the difference between $P_n(x)$ and its mean value. We approach these moments
by first rewriting $P_n(x)$ using equation~(\ref{function equals mean plus F}):
\begin{align*}
P_n(x) &= \log 2 \cdot \omega_0(n) + \frac14 \sum_{2\le q\le X} \omega_{q}(n)^2 \Lambda(q) \\
 &= \log 2 \cdot (\mu(\omega_0) + F_{\omega_0}(n)) + \frac14 \sum_{2\le q\le X} (\mu(\omega_q) + F_{\omega_q}(n))^2 \Lambda(q).
\end{align*}
Upon expanding the square inside the sum and then subtracting $D(x)$, we obtain
\begin{equation}\label{hth moment}
P_n(x) - D(x) = \log 2 \cdot F_{\omega_0}(n) + \frac12 \sum_{2\le q \leq X} \mu(\omega_q)F_{\omega_q}(n) + \frac14 \sum_{2\le q \leq X} F_{\omega_q}(n)^2.
\end{equation}
We then estimate the $h$th moment by taking the entire right-hand side to the $h$th power, expanding into a sum of $3^h$ terms, and estimating each term separately. The terms with the fewest $F$-factors will comprise the main term for $M_h(x)$, while the others contribute only to the error term. The bookkeeping and notation involved with tracking all of these terms is quite messy, and we have organized the remainder of the paper as follows to minimize the trauma to the reader.

The aim of Section~\ref{polynomial arithmetic} is to introduce a general algebraic framework for handling the terms that arise upon expanding the $h$th power of the right-hand side of equation~\eqref{hth moment}. Section~\ref{term estimation} is devoted to proving asymptotic estimates and formulas for those individual terms as $x$ tends to infinity. In Section~\ref{proof of moment asymptotics}, we complete the proof of Proposition~\ref{moment asymptotics} using the results of previous sections. Finally, in Section~\ref{flourish}, we quickly justify our use of probabilistic language in the statement of Theorem~\ref{main theorem} by deducing Theorem 1.1 from Theorem 1.3.

\section{Polynomial accounting}\label{polynomial arithmetic}

The main goal of this section is to establish Proposition~\ref{Rh magic Phi prop}, which is used to identify and simplify the main term of the moments $M_h(x)$ (for $h$ even) at the end of Section~\ref{proof of moment asymptotics}. The proof begins with some combinatorial arguments, concerning polynomials in many variables, which are elementary but extremely notation-intensive. Along the way, we also introduce some polynomial-related notation (Definition~\ref{Rh def}) for future use.

\begin{definition} \label{Tk def}
For any positive integer $k$, define $\Sigma_k$ to be the set of all permutations of $\{1,\dots,k\}$ (that is, the set of all bijections from $\{1,\dots,k\}$ to itself). A typical element of $\Sigma_k$ will be denoted by~$\sigma$.

For any positive even integer $k$, define $T_k$ to be the set of all 2-to-1 functions from $\{1,\dots,k\}$ to $\{1,\dots,k/2\}$. A typical element of $T_k$ will be denoted by $\tau$. We let $\tau_0$ denote the order-preserving element of $T_k$ defined by $\tau_0(j) = \lceil \frac j2 \rceil$ for each $1\le j\le k$.

For $\tau\in T_k$ and $j\in\{1,\dots,k/2\}$, define $\Upsilon_1(j)$ and $\Upsilon_2(j)$ to be the two distinct preimages of $j$ in $\{1,\dots,k\}$; we will never need to distinguish between the two.
\end{definition}

\begin{lemma} \label{Sigma to T lemma}
Let $k$ be a positive even integer. The function $\psi\colon \Sigma_k\to T_k$ defined by $\psi(\sigma) = \tau_0 \circ \sigma^{-1}$ is surjective and $2^{k/2}$-to-1.
\end{lemma}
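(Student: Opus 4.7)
The plan is to give, for each $\tau \in T_k$, an explicit parametrization of the fiber $\psi^{-1}(\tau)$ by $\{1,2\}^{k/2}$, thereby proving surjectivity and the $2^{k/2}$-to-1 property in one stroke. The key observation is that the defining equation $\tau_0 \circ \sigma^{-1} = \tau$ can be read pointwise: for every $i \in \{1,\dots,k\}$ it demands $\tau_0(\sigma^{-1}(i)) = \tau(i)$. Equivalently, $\sigma^{-1}$ must send each fiber $\tau^{-1}(j) = \{\Upsilon_1(j),\Upsilon_2(j)\}$ into the fiber $\tau_0^{-1}(j) = \{2j-1,2j\}$; and since $\sigma^{-1}$ is a bijection of $\{1,\dots,k\}$, this inclusion is forced to be a bijection between these two-element sets.

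Since the two families $\{\tau^{-1}(j)\}_{j=1}^{k/2}$ and $\{\tau_0^{-1}(j)\}_{j=1}^{k/2}$ each partition $\{1,\dots,k\}$ into $k/2$ blocks of size two, constructing a valid $\sigma^{-1}$ reduces to independently choosing, for each $j \in \{1,\dots,k/2\}$, one of the two bijections from $\tau^{-1}(j)$ to $\tau_0^{-1}(j)$. Any such collection of $k/2$ binary choices assembles into a unique permutation $\sigma^{-1} \in \Sigma_k$ satisfying $\psi(\sigma)=\tau$, and by the pointwise reformulation every element of $\psi^{-1}(\tau)$ arises in this way. Since there are $2^{k/2}$ such collections regardless of $\tau$, the map $\psi$ is surjective and uniformly $2^{k/2}$-to-1. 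As a sanity check, this forces $|T_k| = |\Sigma_k|/2^{k/2} = k!/2^{k/2}$, matching the direct count $\binom{k}{2}\binom{k-2}{2}\cdots\binom{2}{2}$ of 2-to-1 functions. There is no real obstacle here; the only thing to isolate is the fact that $\tau_0$ and $\tau$ share the same fiber sizes (namely $2$), which is precisely what allows the equation $\tau_0 \circ \sigma^{-1} = \tau$ to decompose cleanly along fibers.
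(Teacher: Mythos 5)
Your proof is correct and follows essentially the same approach as the paper: you both observe that $\psi(\sigma)=\tau$ is equivalent to $\sigma$ matching up the fiber $\tau_0^{-1}(j)=\{2j-1,2j\}$ with the fiber $\tau^{-1}(j)=\{\Upsilon_1(j),\Upsilon_2(j)\}$ for each $j$, yielding two choices per block and hence $2^{k/2}$ preimages; the paper phrases this via $\{\sigma(2j-1),\sigma(2j)\}=\{\Upsilon_1(j),\Upsilon_2(j)\}$ while you phrase it as $\sigma^{-1}$ restricting to a bijection between fibers, but these are the same condition.
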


\begin{proof}
Given any $\tau\in T_k$, the equality $\psi(\sigma) = \tau$ holds for a particular $\sigma\in\Sigma_k$ if and only if
\begin{equation} \label{Upsilons}
\{\Upsilon_1(j),\Upsilon_2(j)\} = \{\sigma(2j-1),\sigma(2j)\} \text{ for every }1\le j\le \tfrac k2.
\end{equation}
This specifies each of the $\frac k2$ unordered pairs $\{\sigma(2j-1),\sigma(2j)\}$, each of which provides a choice of two options for which element equals $\Upsilon_1(j)$ and which equals $\Upsilon_2(j)$; the total number of preimages $\sigma$ is thus exactly~$2^{k/2}$.
\end{proof}

The following definition and lemma provide one of our main tools for dealing with arbitrary powers of finite sums.

\begin{definition} \label{Phih def}
Let $h$ and $\ell$ be positive integers with $h$ even. Let $R$ be a commutative ring of characteristic zero with a unit element, and define two commutative polynomial rings over $R$ with $\ell+1$ and $(\ell+1)^2$ variables: let $x_0,\dots,x_\ell$ be indeterminates and define $S=R[x_0,\dots,x_\ell]$, and let $\{ z_{ij} \colon 0\le i, j\le \ell \}$ be indeterminates and define $\tilde S = R[z_{00},\dots,z_{\ell\ell}]$. Let $S_h$ be the $R$-submodule of $S$ spanned by monomials of total degree $h$, and let $\tilde S_{h/2}$ be the $R$-submodule of $\tilde S$ spanned by monomials of total degree~$h/2$.

We define an $R$-module homomorphism $\Phi_h \colon S_h \to \tilde S_{h/2}$ in the following way. Given a monic monomial $M=x_{m_1} \cdots x_{m_h}$ in $S_h$ (where $m_1,\dots,m_h \in \{0,\dots,\ell\}$ are not necessarily distinct), set
\[
\Phi_h(M) = \frac1{h!} \sum_{\sigma\in\Sigma_h} z_{m_{\sigma1}m_{\sigma2}} \cdots z_{m_{\sigma(h-1)}m_{\sigma h}}.
\]
(Note that the order of the indices $m_1,\dots,m_h$ is not uniquely defined by $M$, but this is not problematic since the sum defining $\Phi_h(M)$ averages over all permutations $\sigma$.)
Then we extend $\Phi_h$ $R$-linearly to $S_h$, so that $\Phi_h(\sum_j r_j M_j) = \sum_j r_j \Phi_h(M_j)$ for any monic monomials $M_j \in S_h$ and elements $r_j\in R$.

For example, with $h=4$ and $\ell=2$,
\begin{multline*}
\Phi_4(x_0^2x_1x_2 - 7x_1^3x_2) = \tfrac16z_{12}z_{00} + \tfrac16z_{21}z_{00} + \tfrac16z_{10}z_{20} + \tfrac16z_{10}z_{02} + \tfrac16z_{20}z_{01} + \tfrac16z_{01}z_{02} \\
- \tfrac72z_{11}z_{12} - \tfrac72z_{11}z_{21}.
\end{multline*}
\end{definition}

\begin{lemma} \label{monomial function lemma}
Let $h$ and $\ell$ be positive integers with $h$ even. Let $R$ be a commutative ring of characteristic zero with a unit element, and let $\Phi_h$ be defined as in Definition~\ref{Phih def}. For any elements $r_0,\dots,r_\ell\in R$,
\[
\Phi_h\big( (r_0 x_0+\cdots+r_\ell x_\ell)^h \big) = \bigg( \sum_{0\le i, j\le \ell} r_i r_j z_{ij} \bigg)^{h/2}.
\]
\end{lemma}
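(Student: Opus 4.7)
The plan is to unfold both sides into sums over index sequences $(m_1,\ldots,m_h) \in \{0,\ldots,\ell\}^h$ and then use the built-in symmetrization in $\Phi_h$ to collapse everything.

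First I would write the left-hand side as
\[
(r_0 x_0 + \cdots + r_\ell x_\ell)^h = \sum_{(m_1,\ldots,m_h) \in \{0,\ldots,\ell\}^h} r_{m_1} \cdots r_{m_h}\, x_{m_1} \cdots x_{m_h},
\]
treating each term as a monic monomial of total degree $h$ with a chosen ordering of its indices (this is legitimate because the value of $\Phi_h$ on a monic monomial does not depend on how the indices are ordered, since the defining formula averages over all of $\Sigma_h$). By the $R$-linearity of $\Phi_h$, applying it termwise gives
\[
\Phi_h\!\left((r_0 x_0 + \cdots + r_\ell x_\ell)^h\right) = \frac{1}{h!}\sum_{(m_1,\ldots,m_h)} r_{m_1}\cdots r_{m_h} \sum_{\sigma\in\Sigma_h} z_{m_{\sigma 1} m_{\sigma 2}} z_{m_{\sigma 3} m_{\sigma 4}} \cdots z_{m_{\sigma(h-1)} m_{\sigma h}}.
\]

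Next I would swap the order of summation and, for each fixed $\sigma$, perform the change of variables $(m_1,\ldots,m_h) \mapsto (m_{\sigma 1},\ldots,m_{\sigma h})$. Since this is just a permutation of the summation indices and the weight $r_{m_1}\cdots r_{m_h}$ is symmetric in its indices, every $\sigma$ contributes the same inner sum. The factor $1/h!$ exactly cancels the $h!$ resulting copies, leaving
\[
\Phi_h\!\left((r_0 x_0 + \cdots + r_\ell x_\ell)^h\right) = \sum_{(m_1,\ldots,m_h)} r_{m_1}\cdots r_{m_h}\, z_{m_1 m_2}\, z_{m_3 m_4} \cdots z_{m_{h-1} m_h}.
\]

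Finally I would observe that the summand factors completely into $h/2$ independent pairs of indices, so the sum factors as
\[
\prod_{k=1}^{h/2}\bigg(\sum_{m_{2k-1},m_{2k}} r_{m_{2k-1}} r_{m_{2k}}\, z_{m_{2k-1} m_{2k}}\bigg) = \bigg(\sum_{0 \leq i,j \leq \ell} r_i r_j\, z_{ij}\bigg)^{h/2},
\]
which is the claimed identity. The only mildly delicate point is the legitimacy of the termwise application of $\Phi_h$ — in particular, the fact that the definition of $\Phi_h$ on the monic monomial $x_{m_1}\cdots x_{m_h}$ is insensitive to the ordering of the $m_j$'s — so I would pause at the outset to note this independence, after which the rest is a one-line reindexing plus a factoring of a product.
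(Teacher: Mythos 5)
Your proof is correct and follows essentially the same route as the paper's: expand the $h$th power without multinomial coefficients so that each ordered index tuple $(m_1,\dots,m_h)$ appears as a separate monic monomial, apply $\Phi_h$ termwise, reindex by $\sigma$ to eliminate the permutation sum against the $1/h!$ factor, and factor the remaining sum into $h/2$ independent pair-sums. Your preliminary remark about the well-definedness of $\Phi_h$ on a monomial irrespective of the chosen ordering of indices also matches the paper, which makes the same observation immediately after its Definition of $\Phi_h$.
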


\begin{proof}
The key to the calculation is to purposefully avoid expanding $(r_0 x_0+\cdots+r_\ell x_\ell)^h$ using multinomial coefficients; allowing repetition such as $(x_1+x_2)^2 = x_1^2 + x_1x_2 + x_2x_1 + x_2^2$ makes the counting argument much easier. By the definition of $\Phi_h$,
\begin{align*}
\Phi_h\bigg( \bigg( \sum_{i=0}^\ell r_i x_i \bigg)^h \bigg) &= \Phi_h\bigg( \sum_{m_1=0}^\ell \cdots \sum_{m_h=0}^\ell r_{m_1} \cdots r_{m_h} x_{m_1} \cdots x_{m_h} \bigg) \\
&= \sum_{m_1=0}^\ell \cdots \sum_{m_h=0}^\ell r_{m_1} \cdots r_{m_h} \frac1{h!} \sum_{\sigma\in\Sigma_h} z_{m_{\sigma1}m_{\sigma2}} \cdots z_{m_{\sigma(h-1)}m_{\sigma h}} \\
&= \frac1{h!} \sum_{\sigma\in\Sigma_h} \sum_{m_1=0}^\ell \cdots \sum_{m_h=0}^\ell r_{m_1} \cdots r_{m_h} z_{m_{\sigma1}m_{\sigma2}} \cdots z_{m_{\sigma(h-1)}m_{\sigma h}}.
\end{align*}
Since $r_{m_1} \cdots r_{m_h} = r_{m_{\sigma1}} \cdots r_{m_{\sigma h}}$ for any $\sigma\in\Sigma_h$, we can rewrite this identity as
\[
\Phi_h\bigg( \bigg( \sum_{i=0}^\ell r_i x_i \bigg)^h \bigg) = \frac1{h!} \sum_{\sigma\in\Sigma_h} \sum_{m_1=0}^\ell \cdots \sum_{m_h=0}^\ell r_{m_{\sigma1}} \cdots r_{m_{\sigma h}} z_{m_{\sigma1}m_{\sigma2}} \cdots z_{m_{\sigma(h-1)}m_{\sigma h}}.
\]
Now the only effect of any fixed $\sigma$ on the inner $h$-fold sum is to permute the order of the indices; therefore setting $j_1=m_{\sigma_1}$, $j_2=m_{\sigma_2}$, and so on, we may write
\[
\Phi_h\bigg( \bigg( \sum_{i=0}^\ell r_i x_i \bigg)^h \bigg) = \frac1{h!} \sum_{\sigma\in\Sigma_h} \sum_{j_1=0}^\ell \cdots \sum_{j_h=0}^\ell r_{j_1} \cdots r_{j_h} z_{j_1j_2} \cdots z_{j_{h-1}j_h}.
\]
The inner $h$-fold sum no longer depends on $\sigma$, and so
\[
\Phi_h\bigg( \bigg( \sum_{i=0}^\ell r_i x_i \bigg)^h \bigg) = \sum_{j_1=0}^\ell \cdots \sum_{j_h=0}^\ell r_{j_1} \cdots r_{j_h} z_{j_1j_2} \cdots z_{j_{h-1}j_h} = \bigg( \sum_{j_1=0}^\ell \sum_{j_2=0}^\ell r_{j_1} r_{j_2} z_{j_1j_2} \bigg)^{h/2},
\]
which is equivalent to the statement of the lemma.
\end{proof}

\begin{remark}
\newcommand{\Sym}{\mathop{\rm Sym}}
The map $\Phi_h$ can also be interpreted as a rather natural $R$-module homomorphism from $\Sym^{2h}(M)$ to $\Sym^h(M\otimes_R M)$, where $M = R^{\oplus(\ell+1)} \cong S_1$. However, this interpretation does not seem to shorten the verification of the desired identity.
\end{remark}

We now wish to apply these results to a specific polynomial related to the moments of $\log G(n)$.
Given a real number $x$, let $X=(\log\log x)^{1/2}(\log\log\log x)^2$ as before, and let $\rho(X)$ denote the number of prime powers up to $X$. Define the polynomial
\[
Q(x_0, x_1, \ldots, x_{\rho(X)}) = \log 2 \cdot x_0 + \frac{1}{4}\sum_{i=1}^{\rho(X)} \Lambda(q_i)x_i^2.
\]
Note that the polynomial $P_n(x)$ defined in the introduction is equal to this polynomial $Q$ evaluated at the tuple $(x_0, x_1, \ldots, x_{\rho(X)}) = (\omega_0(n), \omega_{q_1}(n), \ldots, \omega_{q_{\rho(X)}}(n))$. For consistency, we will abuse notation and set $q_0 = 0$; this will be convenient when applying the results of this section to $P_n(x)$ in Section \ref{proof of moment asymptotics}.

Let $Q_i$ denote the partial derivative of $Q$ with respect to $x_i$. Observe that
\begin{align}\label{linearpart}
Q(x_0+y_0,\dots,x_{\rho(X)}+y_{\rho(X)}) - Q(y_0,\dots,y_{\rho(X)}) &= \log 2 \cdot x_0 + \frac{1}{2} \sum_{i=1}^{\rho(X)} \Lambda(q_i)x_i y_i + \sum_{i=1}^{\rho(X)} \Lambda(q_i)x_i^2 \nonumber \\ &= \sum_{i = 0}^{\rho(X)} x_iQ_i(y_0, \ldots, y_{\rho(X)}) + \sum_{i=1}^{\rho(X)} \Lambda(q_i)x_i^2.
\end{align}

\begin{definition} \label{Rh def}
Let $h$ be a positive integer. Define
\[
R_h(x_0,\dots,x_{\rho(X)},y_0,\dots,y_{\rho(X)}) = \big( Q(x_0+y_0,\dots,x_{\rho(X)}+y_{\rho(X)}) - Q(y_0,\dots,y_{\rho(X)}) \big)^h.
\]
To expand this out in gruesome detail, $R_h$ can be written as the sum of some number $B_h$ of monomials:
\begin{equation} \label{Rh expanded}
R_h(x_0,\dots,x_{\rho(X)},y_0,\dots,y_{\rho(X)}) = \sum_{\beta=1}^{B_h} r_{h\beta} \prod_{i=1}^{k_{h\beta}} x_{v(h,\beta, i)} \prod_{j=1}^{\tilde k_{h\beta}} y_{w(h,\beta, j)},
\end{equation}
where each $v(h,\beta, i)$ and $w(h,\beta, j)$ is an integer in $\{0, 1, \dots,{\rho(X)}\}$; the total $x$-degree of the $\beta$th monomial in the sum is $k_{h\beta}$, while its total $y$-degree is $\tilde k_{h\beta}$. From equation~\eqref{linearpart}, we see that each $k_{h\beta}$ is between $h$ and $2h$ (inclusive), each $\tilde k_{h\beta}$ is at most $h$, and each $k_{h\beta}+\tilde k_{h\beta}$ is also between $h$ and~$2h$.
\end{definition}

As it turns out, the most significant monomials on the right-hand side of equation~\eqref{Rh expanded} are those of minimal $x$-degree, that is, those monomials with $k_{h\beta}=h$. (These monomials will contribute to the main term of the calculation of the $h$th moment in Section~\ref{proof of moment asymptotics} when $h$ is even, while the other monomials contribute only to the error term.) Consequently we focus on these special monomials for the remainder of this section.

\begin{lemma} \label{Rh small part lemma}
The part of $R_h$ of total $x$-degree $h$ is
\begin{equation} \label{total x degree forms}
\sum_{\substack{\beta\le B_h \\ k_{h\beta}=h}} r_{h\beta} \prod_{i=1}^h x_{v(h,\beta, i)} \prod_{j=1}^{\tilde k_{h\beta}} y_{w(h,\beta,j)} = \bigg(\sum_{i = 0}^{\rho(X)} x_iQ_i(y_0, \ldots, y_{\rho(X)}) \bigg)^h.
\end{equation}
\end{lemma}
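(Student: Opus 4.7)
The proof will be a direct unpacking of the decomposition already recorded in equation~\eqref{linearpart}. That identity tells us that
\[
Q(x_0+y_0,\dots,x_{\rho(X)}+y_{\rho(X)}) - Q(y_0,\dots,y_{\rho(X)}) = A(\mathbf{x},\mathbf{y}) + B(\mathbf{x}),
\]
where $A(\mathbf{x},\mathbf{y}) = \sum_{i=0}^{\rho(X)} x_i Q_i(y_0,\dots,y_{\rho(X)})$ is homogeneous of degree~$1$ in the variables $x_0,\dots,x_{\rho(X)}$, and $B(\mathbf{x}) = \sum_{i=1}^{\rho(X)} \Lambda(q_i) x_i^2$ is homogeneous of degree~$2$ in those same variables (with no $y$-dependence at all). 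This is the crucial structural fact: in the $x$-variables, the difference $Q(\mathbf{x}+\mathbf{y}) - Q(\mathbf{y})$ splits cleanly into a linear piece and a purely quadratic piece.

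The plan is then to apply the binomial theorem to $R_h = (A+B)^h$:
\[
R_h = \sum_{k=0}^{h} \binom{h}{k} A(\mathbf{x},\mathbf{y})^{h-k} B(\mathbf{x})^k.
\]
The monomial $A^{h-k} B^k$ has total $x$-degree equal to $(h-k)\cdot 1 + k\cdot 2 = h+k$, so the only summand of total $x$-degree exactly $h$ is the $k=0$ contribution, namely $A(\mathbf{x},\mathbf{y})^h$. Every summand with $k\ge 1$ contributes only to parts of $R_h$ of total $x$-degree strictly greater than $h$, and hence does not appear on the left-hand side of~\eqref{total x degree forms}.

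Putting these two observations together, I obtain
\[
\sum_{\substack{\beta\le B_h \\ k_{h\beta}=h}} r_{h\beta} \prod_{i=1}^h x_{v(h,\beta,i)} \prod_{j=1}^{\tilde k_{h\beta}} y_{w(h,\beta,j)} = A(\mathbf{x},\mathbf{y})^h = \bigg( \sum_{i=0}^{\rho(X)} x_i Q_i(y_0,\dots,y_{\rho(X)}) \bigg)^h,
\]
which is exactly the claimed identity. There is essentially no obstacle here beyond making the degree bookkeeping explicit; the quadratic-in-$x$ term $\sum_i \Lambda(q_i) x_i^2$ in~\eqref{linearpart} is what forces any contribution other than $A^h$ to overshoot the degree~$h$ threshold, and the lemma is really just the observation that the lowest-$x$-degree part of $R_h$ is controlled entirely by the gradient of $Q$ evaluated at~$\mathbf{y}$.
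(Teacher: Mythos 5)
Your proof is correct and follows essentially the same route as the paper's: decompose $Q(\mathbf{x}+\mathbf{y})-Q(\mathbf{y})$ via equation~\eqref{linearpart} into a piece linear in $\mathbf{x}$ and a piece quadratic in $\mathbf{x}$, then observe that only the linear piece raised to the $h$th power contributes at $x$-degree $h$. The paper compresses this into one sentence (``$R_h$ is the $h$th power of a polynomial whose part of total $x$-degree at most $1$ equals $\sum_i x_i Q_i$''), whereas you spell out the binomial expansion explicitly; both are the same argument.
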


\begin{proof}
The left-hand side is exactly the definition of the part of $R_h$ of total $x$-degree $h$, or equivalently (since $k_{h\beta}\ge h$ always) the part of $R_h$ of total $x$-degree at most~$h$. But $R_h$ is the $h$th power of the polynomial $Q(x_0+y_0,\dots,x_{\rho(X)}+y_{\rho(X)}) - Q(y_1,\dots,y_{\rho(X)})$, whose part of total $x$-degree at most $1$ equals $\sum_{i = 0}^{\rho(X)} x_iQ_i(y_0, \ldots, y_{\rho(X)})$ by equation~\eqref{linearpart}.
\end{proof}

We are now ready to establish the proposition that will be used in Section~\ref{proof of moment asymptotics} when analyzing the main term of the even moments. For any positive even integer $h$, define
\begin{equation} \label{s_h def}
s_h = \frac{h!}{2^{h/2}(h/2)!}.
\end{equation}

\begin{prop} \label{Rh magic Phi prop}
Let $h$ be a positive even integer. In the notation of Definitions~\ref{Tk def} and~\ref{Rh def},
\begin{multline} \label{after applying Phi forms}
\frac1{(h/2)!} \sum_{\substack{\beta\le B_h \\ k_{h\beta}=h}} r_{h\beta} \prod_{j=1}^{\tilde k_{h\beta}} y_{w(h,\beta, j)} \sum_{\tau\in T_h} \prod_{i=1}^{h/2} z_{v(h,\beta,\Upsilon_1(i))v(h,\beta,\Upsilon_2(i))} \\
= s_h \bigg( \sum_{i = 0}^{\rho(X)} \sum_{j = 0}^{\rho(X)} Q_i(y_0, \ldots, y_{\rho(X)})Q_j(y_0, \ldots, y_{\rho(X)}) z_{ij} \bigg)^{h/2}.
\end{multline}
\end{prop}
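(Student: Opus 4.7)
The plan is to substitute the explicit description of the $x$-degree-$h$ part of $R_h$ from Lemma~\ref{Rh small part lemma}, namely $\bigl(\sum_{i=0}^{\rho(X)} Q_i(y) x_i\bigr)^h$, into the left-hand side of~\eqref{after applying Phi forms}, and then evaluate the resulting double sum by swapping the order of summation so that the inner sum factors across the $h/2$ pairs determined by each $\tau \in T_h$. The whole argument is essentially combinatorial bookkeeping.

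First, I would expand $\bigl(\sum_{i=0}^{\rho(X)} Q_i(y) x_i\bigr)^h$ without collecting like terms, so that each $\beta$ with $k_{h\beta} = h$ is indexed by a sequence $v = (v_1, \ldots, v_h) \in \{0, 1, \ldots, \rho(X)\}^h$ with $v(h, \beta, i) = v_i$ and $r_{h\beta} \prod_{j=1}^{\tilde k_{h\beta}} y_{w(h, \beta, j)} = \prod_{i=1}^h Q_{v_i}(y)$. Substituting into the left-hand side of~\eqref{after applying Phi forms} yields
\[
\frac{1}{(h/2)!} \sum_{v \in \{0, \ldots, \rho(X)\}^h} \prod_{i=1}^h Q_{v_i}(y) \sum_{\tau \in T_h} \prod_{j=1}^{h/2} z_{v_{\Upsilon_1(j)} v_{\Upsilon_2(j)}}.
\]

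The key manipulation is to swap the $v$- and $\tau$-sums. For each fixed $\tau \in T_h$, the pairs $\{\Upsilon_1(j), \Upsilon_2(j)\}$ partition $\{1, \ldots, h\}$ into $h/2$ disjoint pairs, so the inner sum over $v$ decouples: setting $a_j = v_{\Upsilon_1(j)}$ and $b_j = v_{\Upsilon_2(j)}$, which range independently over $\{0, \ldots, \rho(X)\}^2$ for each $j$, gives
\[
\sum_v \prod_{i=1}^h Q_{v_i}(y) \prod_{j=1}^{h/2} z_{v_{\Upsilon_1(j)} v_{\Upsilon_2(j)}} = \prod_{j=1}^{h/2} \sum_{a, b = 0}^{\rho(X)} Q_a(y) Q_b(y) z_{ab} = \left(\sum_{i, j = 0}^{\rho(X)} Q_i(y) Q_j(y) z_{ij}\right)^{h/2}.
\]
Crucially, this expression is independent of $\tau$, which also clarifies why the ambiguity in distinguishing $\Upsilon_1$ from $\Upsilon_2$ is harmless (both $a$ and $b$ range symmetrically over $\{0, \ldots, \rho(X)\}$).

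Since the inner quantity does not depend on $\tau$, the outer sum over $\tau \in T_h$ simply contributes a factor of $|T_h|$, which by Lemma~\ref{Sigma to T lemma} equals $h!/2^{h/2}$ (the map $\psi\colon \Sigma_h \to T_h$ being $2^{h/2}$-to-$1$ from a domain of size $h!$). Combined with the prefactor $1/(h/2)!$, this yields precisely $\frac{h!}{2^{h/2}(h/2)!} = s_h$ in front of $\bigl(\sum_{i,j} Q_i(y) Q_j(y) z_{ij}\bigr)^{h/2}$, matching the right-hand side exactly. The main obstacle is purely notational: once each $\beta$ with $k_{h\beta}=h$ is correctly identified with a sequence $v$ via the un-collected expansion, the Fubini step and the factorization across the pairs determined by $\tau$ are routine.
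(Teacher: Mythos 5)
Your proposal is correct. It arrives at the same conclusion as the paper but organizes the argument rather differently: the paper introduces the abstract $R$-module homomorphism $\Phi_h$ (Definition~\ref{Phih def}), proves in Lemma~\ref{monomial function lemma} that $\Phi_h$ sends $(\sum_i r_ix_i)^h$ to $(\sum_{i,j}r_ir_jz_{ij})^{h/2}$ by a Fubini-and-factor computation over $\Sigma_h$, and then establishes Proposition~\ref{Rh magic Phi prop} by showing that both sides of~\eqref{after applying Phi forms} are the images under $\Phi_h$ of $s_h$ times the corresponding sides of~\eqref{total x degree forms}, with Lemma~\ref{Sigma to T lemma} converting the resulting $\Sigma_h$-sum on the left into a $T_h$-sum. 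You bypass $\Phi_h$ entirely: you substitute the uncollected expansion of $(\sum_i Q_i(y)x_i)^h$ directly into the left side of~\eqref{after applying Phi forms}, swap the $v$- and $\tau$-sums, factor across the $h/2$ pairs determined by each $\tau$, observe the result is $\tau$-independent, and finish by counting $|T_h|=h!/2^{h/2}$ via Lemma~\ref{Sigma to T lemma}. The underlying combinatorial content — Fubini plus factorization across the partition of $\{1,\dots,h\}$ into pairs — is identical, but your version is flatter and more self-contained: it needs only Lemmas~\ref{Sigma to T lemma} and~\ref{Rh small part lemma}, not the $\Phi_h$ machinery, which the paper sets up partly for its own conceptual interest (as the remark after Lemma~\ref{monomial function lemma} indicates). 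Your remark about why the indexing by uncollected monomials is harmless (the $\tau$-sum is symmetric under permutations of the tuple $v$) and why the ambiguity between $\Upsilon_1$ and $\Upsilon_2$ doesn't matter (the inner double sum is symmetric in $a,b$) addresses exactly the two notational pitfalls one might worry about, so the proof is complete.
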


\begin{proof}
Consider the operator $\Phi_h$ from Definition~\ref{Phih def}, using the ring $R = \R[y_0,\dots,y_{\rho(X)}]$. We establish the lemma by showing that the left- and right-hand sides of equation~\eqref{after applying Phi forms} are the results of applying $\Phi_h$ to $s_h$ times the left- and right-hand sides, respectively, of equation~\eqref{total x degree forms}.

Checking the right-hand side is easy: since $\Phi_h$ is an $R$-module homomorphism,
\begin{align*}
\Phi_h\bigg( s_h \bigg( \sum_{j=0}^{\rho(X)} x_j Q_j(y_1,\dots,y_{\rho(X)}) \bigg)^h \bigg) &= s_h \Phi_h\bigg( \bigg( \sum_{j=0}^{\rho(X)} Q_j(y_1,\dots,y_{\rho(X)}) x_j \bigg)^h \bigg) \\
&= s_h \bigg( \sum_{i=0}^{\rho(X)} \sum_{j=0}^{\rho(X)} Q_i(y_0,\dots,x_{\rho(X)}) Q_j(y_0,\dots,y_{\rho(X)}) z_{ij} \bigg)^{h/2}
\end{align*}
by Lemma~\ref{monomial function lemma} with $r_j = Q_j(y_1,\dots,y_{\rho(X)}) \in R$. As for the left-hand side: by $R$-linearity we have
\begin{align*}
\Phi_h \bigg( s_h \sum_{\substack{\beta\le B_h \\ k_{h\beta}=h}} r_{h\beta} & \prod_{i=1}^h x_{v(h,\beta, i)} \prod_{j=1}^{\tilde k_{h\beta}} y_{w(h,\beta,j)} \bigg) \\
&= s_h \sum_{\substack{\beta\le B_h \\ k_{h\beta}=h}} r_{h\beta} \prod_{j=1}^{\tilde k_{h\beta}} y_{w(h,\beta,j)} \Phi_h \bigg( \prod_{i=1}^h x_{v(h,\beta, i)} \bigg) \\
&= s_h \sum_{\substack{\beta\le B_h \\ k_{h\beta}=h}} r_{h\beta} \prod_{j=1}^{\tilde k_{h\beta}} y_{w(h,\beta,j)} \frac1{h!} \sum_{\sigma\in\Sigma_h} z_{v(h,\beta, \sigma1)v(h,\beta, \sigma2)} \cdots z_{v(h,\beta, \sigma(h-1))v(h,\beta, \sigma h)}.
\end{align*}
But by Lemma~\ref{Sigma to T lemma}, the set $\Sigma_h$ can be partitioned into subsets of size $2^{h/2}$, each subset corresponding to a particular $\tau\in T_K$ and consisting of those $\sigma$ for which equation~\eqref{Upsilons} holds. Therefore
\begin{align*}
\Phi_h \bigg( s_h & \sum_{\substack{\beta\le B_h \\ k_{h\beta}=h}} r_{h\beta}  \prod_{i=1}^h x_{v(h,\beta, i)} \prod_{j=1}^{\tilde k_{h\beta}} y_{w(h,\beta,j)} \bigg) \\
&= s_h \sum_{\substack{\beta\le B_h \\ k_{h\beta}=h}} r_{h\beta} \prod_{j=1}^{\tilde k_{h\beta}} y_{w(h,\beta,j)} \frac{2^{h/2}}{h!} \sum_{\tau\in T_h} z_{v(h,\beta, \Upsilon_1(1))v(h,\beta, \Upsilon_2(1))} \cdots z_{v(h,\beta, \Upsilon_1(h/2))v(h,\beta, \Upsilon_2(h/2))}.
\end{align*}
The lemma now follows upon noting that $s_h2^{h/2}/h! = 1/(\frac h2)!$.
\end{proof}

\section{Covariances of two additive functions}\label{term estimation}

The goal of this section is to evaluate certain expressions, arising from expanding the $h$th power of the right-hand side of equation~\eqref{hth moment}, in terms of certain ``covariances'' which we now define.

Throughout this section, $k$ is a fixed positive integer, $x>1$ is a real number, and $z = x^{1/2k}$.
For any two additive functions $g_1$ and $g_2$, define their \emph{covariance} to be
\[
\cov(g_1, g_2) = \cov(g_1, g_2;z) = \sum_{p \leq z} \frac{g_1(p)g_2(p)}{p} \left( 1 - \frac{1}{p} \right).
\]
Whenever $g_1(p),g_2(p) \ll \log p$ (as will be the case in our application), this definition can be simplified to
\begin{equation}  \label{cov with O(1)}
\cov(g_1, g_2) = \sum_{p \leq z} \frac{g_1(p)g_2(p)}{p} + O\bigg( \sum_{p \leq z} \frac{\log^2 p}{p^2} \bigg) = \sum_{p \leq z} \frac{g_1(p)g_2(p)}{p} + O(1).
\end{equation}
We begin by finding asymptotic formulas for these covariances when each of $g_1$ and $g_2$ is equal to one of the~$\omega_q$ ($q\ge0$).

The Bombieri--Vinogradov theorem will be an essential tool here and later in the paper; see for example~\cite[Theorem 17.1]{ik04} for the statement for the function $\psi(x;q,a)$, from which it is simple to derive the analogous versions for the functions $\theta(x;q,a)$ and $\pi(x;q,a)$ (an example of such a derivation is the proof of~\cite[Corollary 1.4]{AH}).

\begin{theorem}\label{bombvino}
For any positive real number $A$, there exists a positive real number $B=B(A)$ such that the estimates
\begin{align}
\sum_{2\le q \leq Q} \max_{(a, q) = 1} \left| \theta(x; q, a) - \frac{x}{\phi(q)} \right| &\ll_A \frac{x}{(\log x)^A} \label{theta BV} \\
\sum_{2\le q \leq Q} \max_{(a, q) = 1} \left| \pi(x; q, a) - \frac{\li(x)}{\phi(q)} \right| &\ll_A \frac{x}{(\log x)^A} \label{pi BV}
\end{align}
hold for all $x>1$, where $Q = x^{1/2}(\log x)^{-B}$.
\end{theorem}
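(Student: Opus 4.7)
The plan is to treat Theorem~\ref{bombvino} as a repackaging of the classical Bombieri--Vinogradov theorem. I would cite the $\psi$-version (Theorem~17.1 of~\cite{ik04}) as a black box, and then deduce the $\theta$ and $\pi$ versions~\eqref{theta BV} and~\eqref{pi BV} by two standard reductions, following the template hinted at by the reference to~\cite[Corollary~1.4]{AH}.

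For the passage from $\psi$ to $\theta$, I would use
\[
0\le \psi(x;q,a)-\theta(x;q,a)=\sum_{k\ge 2}\theta(x^{1/k};q,a)\le\sum_{k\ge 2}\theta(x^{1/k})\ll x^{1/2}\log x,
\]
uniformly in $q$ and $a$. Summing this extra error over $q\le Q=x^{1/2}(\log x)^{-B}$ contributes $O(Qx^{1/2}\log x)=O(x(\log x)^{1-B})$, which can be absorbed into the target $x/(\log x)^A$ by requiring $B(A)\ge A+1$ in the $\psi$-version.

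For the passage from $\theta$ to $\pi$, I would apply Abel summation to obtain
\[
\pi(x;q,a)-\frac{\li(x)}{\phi(q)}=\frac{\theta(x;q,a)-x/\phi(q)}{\log x}+\int_2^x\frac{\theta(t;q,a)-t/\phi(q)}{t(\log t)^2}\,dt+O(1).
\]
Taking absolute values, maximizing over reduced residues, and summing over $q\le Q$, the boundary term is controlled directly by~\eqref{theta BV} (with $A$ replaced by $A+1$), and the $O(Q)$ term is negligible since $Q\ll x/(\log x)^A$. For the integral I would split the range of $t$: above a threshold at which $Q\le t^{1/2}(\log t)^{-B}$ still holds, the $\theta$-version applies at the full level $Q$, and integrating $dt/(t(\log t)^{A+2})$ yields the required bound; below the threshold, the Brun--Titchmarsh inequality gives $\max_{(a,q)=1}|\theta(t;q,a)-t/\phi(q)|\ll t/\phi(q)$, whose subsequent summation and integration can be made $\ll x/(\log x)^A$ provided $B(A)$ is chosen sufficiently large from the outset.

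The only genuinely hard step is contained entirely in the cited $\psi$-theorem, whose proof rests on the large sieve inequality for Dirichlet characters combined with a Vaughan-type decomposition of the von Mangoldt function and the Siegel--Walfisz theorem to handle moduli of size bounded by a power of $\log x$. Conditional on that input, the two reductions above are routine and uniform in $q$ and $a$, and deliver precisely the formulations~\eqref{theta BV} and~\eqref{pi BV} needed in the sequel.
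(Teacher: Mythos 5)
The paper does not actually prove Theorem~\ref{bombvino}: it cites the $\psi$-version of Bombieri--Vinogradov (Theorem~17.1 of~\cite{ik04}) and explicitly defers the passage to $\theta$ and $\pi$ to the reference~\cite[Corollary 1.4]{AH}, treating those reductions as standard. Your proposal is precisely that intended derivation written out in full, and it is correct; the only cosmetic slip is that the displayed identity $\psi(x;q,a)-\theta(x;q,a)=\sum_{k\ge2}\theta(x^{1/k};q,a)$ is not literally true (the condition $p^k\equiv a\bmod q$ is not the same as $p\equiv a\bmod q$), but the inequality you actually use, $0\le\psi(x;q,a)-\theta(x;q,a)\le\sum_{k\ge2}\theta(x^{1/k})\ll x^{1/2}\log x$, is valid, and the rest (Abel summation, the split of the integral with Brun--Titchmarsh below the threshold and the $\theta$-version above it, choosing $B(A)$ large enough) is the routine argument the paper alludes to.
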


The following three lemmas provide the desired evaluations of the relevant covariances; we must attend separately to the cases where neither, one, or both of the two additive functions equals~$\omega_0$.

\begin{lemma}  \label{little cov lemma}
Let $q_1$ and $q_2$ be powers of primes (possibly of the same prime), and let $[q_1, q_2]$ denote the least common multiple of $q_1$ and~$q_2$. Then
\[
\displaystyle \cov(\omega_{q_1}, \omega_{q_2}) = \frac{\log\log z}{\phi([q_1, q_2])} + O(1)
\]
uniformly for $q_1, q_2 \leq \sqrt z$.
\end{lemma}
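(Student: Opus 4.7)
The proof is essentially a direct application of the Mertens-type estimate in Lemma~\ref{mertenspartial}, together with the simplification of the covariance already recorded in equation~\eqref{cov with O(1)}. The central observation is that for any prime $p$ and any prime power $q$, Definition~\ref{omega q def} gives $\omega_q(p) = \mathbf{1}[p \equiv 1 \pmod q] \in \{0,1\}$; in particular $\omega_{q_i}(p) \ll \log p$, so we may invoke~\eqref{cov with O(1)} to write
\[
\cov(\omega_{q_1},\omega_{q_2}) = \sum_{p \le z} \frac{\omega_{q_1}(p)\,\omega_{q_2}(p)}{p} + O(1) = \sum_{\substack{p \le z \\ p \equiv 1\!\!\pmod{[q_1,q_2]}}} \frac{1}{p} + O(1),
\]
where the second equality uses that $\omega_{q_1}(p)\omega_{q_2}(p) = 1$ exactly when $p$ is simultaneously $\equiv 1\pmod{q_1}$ and $\equiv 1 \pmod{q_2}$, i.e., $\equiv 1 \pmod{[q_1,q_2]}$.

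\textbf{Applying the Mertens estimate.} Because $q_1, q_2 \le \sqrt z$, we have $[q_1,q_2] \le q_1 q_2 \le z$, which puts us in the range of validity of Lemma~\ref{mertenspartial} with modulus $[q_1,q_2]$. That lemma yields
\[
\sum_{\substack{p \le z \\ p \equiv 1\!\!\pmod{[q_1,q_2]}}} \frac{1}{p} = \frac{\log\log z}{\phi([q_1,q_2])} + O\!\left( \frac{\log[q_1,q_2]}{\phi([q_1,q_2])} \right).
\]

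\textbf{Absorbing the error term.} What remains is to check that $\log[q_1,q_2]/\phi([q_1,q_2]) \ll 1$ uniformly in prime powers $q_1, q_2$. Since $q_1$ and $q_2$ are each prime powers, $[q_1,q_2]$ has at most two distinct prime factors; the standard bound $q/\phi(q) \le 2$ for prime powers $q$ therefore gives $[q_1,q_2]/\phi([q_1,q_2]) \le 4$. Combined with the elementary estimate $\log m / m \ll 1$ for all integers $m \ge 1$, this yields
\[
\frac{\log[q_1,q_2]}{\phi([q_1,q_2])} \le 4 \cdot \frac{\log[q_1,q_2]}{[q_1,q_2]} \ll 1.
\]
Putting the three steps together produces the claimed asymptotic.

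\textbf{Main obstacle.} There is no real obstacle: once one notices that $\omega_q$ at a prime argument is an indicator function, the proof reduces to invoking the already-quoted Lemma~\ref{mertenspartial} and observing that prime powers $q_1,q_2$ can only generate at most two distinct prime factors in $[q_1,q_2]$, so $\phi([q_1,q_2])$ is a constant multiple of $[q_1,q_2]$ and the Mertens error term is absorbed into the $O(1)$.
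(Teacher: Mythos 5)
Your proof is correct and follows essentially the same route as the paper's: use equation~\eqref{cov with O(1)} to reduce to $\sum_{p\le z,\,p\equiv1\,(\mathrm{mod}\,[q_1,q_2])}1/p$, apply Lemma~\ref{mertenspartial}, and absorb the $O(\log[q_1,q_2]/\phi([q_1,q_2]))$ error into $O(1)$. You spell out two small details that the paper leaves implicit (that $[q_1,q_2]\le z$ so Lemma~\ref{mertenspartial} applies, and that $[q_1,q_2]/\phi([q_1,q_2])\le 4$ since $[q_1,q_2]$ has at most two prime factors), but the substance is identical.
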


\begin{proof}
Since each $\omega_{q_i}$ is uniformly bounded, and $\omega_{q_1}(p)\omega_{q_2}(p)=1$ precisely when $p$ is congruent to $1$ modulo $[q_1,q_2]$, equation~\eqref{cov with O(1)} becomes
\begin{align*}
\cov(\omega_{q_1}, \omega_{q_2}) = \sum_{\substack{p \leq z \\ p\equiv1\mod{[q_1,q_2]}}} \frac1{p} + O(1) 
= \frac{\log\log z}{\phi([q_1,q_2])} + O\bigg( \frac{\log [q_1,q_2]}{\phi([q_1,q_2])} \bigg) + O(1)
\end{align*}
by Lemma~\ref{mertenspartial}; and the first error term can be absorbed into the~$O(1)$.
\end{proof}

\begin{lemma}  \label{medium cov lemma}
If $q \leq z^{1/4}$ is a prime power, then
\[
\displaystyle \cov(\omega_{q}, \omega_{0}) = \frac{(\log\log z)^2}{2 \phi(q)} + O(\log\log z).
\]
\end{lemma}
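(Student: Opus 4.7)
The plan is to reduce the covariance to a one-variable partial-summation identity whose main term explicitly integrates to $(\log\log z)^2/2$, rather than attempting to sum the pointwise Lemma~\ref{mertenspartial} estimate over all primes $r \mid p-1$ (which would overstate the contribution: the summand $\log\log u/\phi(r)$ cannot be replaced term-by-term without losing the factor of $1/2$ that arises from the Abel-summation integral $\int \log\log u/(u\log u)\,du = \tfrac12(\log\log u)^2$).

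To begin, I apply equation~\eqref{cov with O(1)}: since $\omega_q(p)\in\{0,1\}$ and $\omega_0(p) = \omega(p-1) \ll \log p$, we get
\[
\cov(\omega_q, \omega_0) = \sum_{\substack{p \leq z \\ p \equiv 1 \pmod q}} \frac{\omega(p-1)}{p} + O(1).
\]
Setting $T(u) = \sum_{p \leq u,\, p \equiv 1 \pmod q} \omega(p-1)$, partial summation gives
\[
\sum_{\substack{p \leq z \\ p \equiv 1 \pmod q}} \frac{\omega(p-1)}{p} = \frac{T(z)}{z} + \int_2^z \frac{T(u)}{u^2}\, du.
\]

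The central technical step is to prove an asymptotic of the shape $T(u) = \li(u)\log\log u/\phi(q) + O(\li(u)/\phi(q))$, uniformly for prime powers $q \leq u^{1/4}$. Here I would expand $\omega(p-1) = \sum_{r\text{ prime}} \mathbf{1}[r\mid p-1]$ to obtain $T(u) = \sum_r \pi(u;[q,r],1)$, with the single term $r = p_0$ (the prime underlying $q$) contributing only $O(\li(u)/\phi(q))$. For $r$ coprime to $q$ with $qr \leq \sqrt{u}/(\log u)^B$, the Bombieri--Vinogradov theorem (equation~\eqref{pi BV}) gives $\pi(u;qr,1) = \li(u)/\phi(qr) + E(qr)$ with $\sum_r |E(qr)| \ll u/(\log u)^A$; summing the main term produces $\li(u)/\phi(q) \cdot (\log\log u + O(1))$ via Mertens for primes (using $q \leq u^{1/4}$ to ensure $\log\log(\sqrt{u}/q) = \log\log u + O(1)$). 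For the complementary range $\sqrt{u}/(q(\log u)^B) < r \leq (u-1)/q$, the Brun--Titchmarsh inequality gives $\pi(u;qr,1) \ll u/(\phi(qr)\log(u/qr))$, and this range contributes $O(\li(u)/\phi(q))$ after summing.

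Substituting the asymptotic for $T(u)$ and using $\li(u) \sim u/\log u$ yields
\[
\int_2^z \frac{T(u)}{u^2}\, du = \frac{1}{\phi(q)} \int_2^z \frac{\log\log u}{u\log u}\, du + O\Bigl(\frac{\log\log z}{\phi(q)}\Bigr) = \frac{(\log\log z)^2}{2\phi(q)} + O(\log\log z),
\]
while the boundary term $T(z)/z$ is $O(\log\log z/(\phi(q)\log z))$ and absorbs into the error. The principal obstacle will be controlling the medium/large modulus range $r > \sqrt{u}/(q(\log u)^B)$ with sufficient uniformity in $q$: Brun--Titchmarsh only yields the correct order of magnitude rather than an asymptotic, so one must verify that the resulting $O(\li(u)/\phi(q))$ error is small enough to integrate to the stated $O(\log\log z)$ without overwhelming the main term of order $(\log\log z)^2/\phi(q)$.
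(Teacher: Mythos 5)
Your overall strategy — reduce to $T(u) = \sum_{p\le u,\,p\equiv 1\pmod q}\omega(p-1)$, prove an asymptotic for $T$ via Bombieri--Vinogradov, then partial-sum — is the same as the paper's. But the treatment of the large-modulus range has a genuine gap, and the asymptotic you claim for $T$ is not actually established by the argument you sketch.

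You assert that summing Brun--Titchmarsh over all primes $r$ with $\sqrt u/(q(\log u)^B) < r \le (u-1)/q$ gives $O(\li(u)/\phi(q))$. This is false as derived. When you sum $u/(\phi(qr)\log(u/qr))$ over this range, the $\log(u/qr)$ in the denominator degrades as $qr$ approaches $u$. A dyadic decomposition on $u/(qr) \in [2^j, 2^{j+1})$ shows that each block contributes $\ll u/(\phi(q)\,j\log u)$, and summing over $j$ up to $\asymp \log u$ yields $O\bigl(\li(u)\log\log u/\phi(q)\bigr)$, \emph{not} $O(\li(u)/\phi(q))$. That extra $\log\log u$ is fatal: once fed through partial summation, this error term integrates to $\asymp (\log\log z)^2/\phi(q)$, the same size as your main term, so the conclusion does not follow.

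The fix — and the route the paper takes — is not modulus-by-modulus Brun--Titchmarsh but a per-$p$ observation: since $p - 1 < u$, at most two primes $r > u^{1/3}$ (or $O(1)$ primes $>u^c$ for any fixed $c > 0$) can divide $p - 1$. Hence $\sum_{r > u^{1/3}} \pi(u;qr,1) \le 2\pi(u;q,1) \ll \li(u)/\phi(q)$, with no $\log\log$ loss. With this in hand, the remaining middle range $\sqrt u/(q(\log u)^B) < r \le u^{1/3}$ has $\log(u/qr) \gg \log u$ uniformly, so a straightforward Brun--Titchmarsh sum there is fine; the paper avoids even that by truncating its BV range already at $t^{1/3}$. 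You should adopt the ``$O(1)$ large prime factors'' trick explicitly.

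There is also a secondary uniformity issue: you integrate from $u=2$ to $z$, but your asymptotic for $T(u)$ is claimed only for $q \le u^{1/4}$, which fails for $u$ close to $q$. The paper proves its estimate for $S(t)$ for \emph{all} $t > q$ by separately (and trivially) handling the regime $q > \log^2 t$, noting that there both the main term and the full sum are $\ll t/\log t$. You will need some version of this step to justify the integral over the whole range.
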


\begin{proof}
We emulate the proof of~\cite[Lemma 2.1]{ep85}. In preparation for a partial summation calculation, we first show that
\begin{equation}  \label{q0 claim}
\sum_{p \leq t} \omega_q(p) \omega_0(p) = \sum_{\substack{p \leq t \\ p \equiv 1 \mod q}} \omega(p-1) = \frac{t\log\log t}{\phi(q)\log t} + O\bigg(\frac{t}{\log t}\bigg)
\end{equation}
for all $t>q$; the first equality follows from Definition~\ref{omega q def} of $\omega_q$ and Definition~\ref{omega0 def} of~$\omega_0$. When $q>\log^2 t$ this estimate is simple: the trivial bounds $\pi(t;q,1) < t/q$ and $\omega(p-1) \ll \log p$ result in
\[
\sum_{\substack{p \leq t \\ p \equiv 1 \mod q}} \omega(p-1) \ll \pi(t;q,1) \log t < \frac{t\log t}q \ll \frac t{\log t},
\]
which is consistent with the right-hand side of equation~\eqref{q0 claim} since $q>\log^2t$ implies that $(\log\log t)/ \phi(q) \ll (\log\log t \log\log\log t)/\log^2 t \ll 1$. Consequently, we may assume that $q \le \log^2t$.

Letting $\ell$ denote a variable of summation taking only prime values, and noting that at most $2$ primes greater than $p^{1/3}$ can divide $p-1$,
\begin{align}
\sum_{\substack{p \leq t \\ p \equiv 1 \mod q}} \omega(p-1) &= \sum_{\substack{p \leq t \\ p \equiv 1 \mod q}} \bigg( \sum_{\substack{\ell\mid(p-1) \\ \ell\le t^{1/3}}} 1 + \sum_{\substack{\ell\mid(p-1) \\ \ell> t^{1/3}}} 1 \bigg) \notag \\
&= \sum_{\substack{\ell \leq t^{1/3}}} \sum_{\substack{p \leq t \\ p \equiv 1 \mod q \\ p \equiv 1 \mod \ell}} 1 + O\bigg( \sum_{\substack{p \leq t \\ p \equiv 1 \mod q}} 2 \bigg) \notag \\
&= \bigg( \sum_{\substack{\ell \leq t^{1/3} \\ \ell \nmid q}} \sum_{\substack{p \leq t \\ p \equiv 1 \mod{q\ell}}} 1 + O\bigg( \sum_{\substack{\ell \leq t^{1/3} \\ \ell \mid q}} \sum_{\substack{p \leq t \\ p \equiv 1 \mod{q}}} 1 \bigg) \bigg) + O(\pi(t;q,1)) \notag \\
&= \sum_{\substack{\ell \leq t^{1/3} \\ \ell \nmid q}} \pi(t;q\ell,1) + O\big( \omega(q) \pi(t;q,1) \big) = \sum_{\substack{\ell \leq t^{1/3} \\ \ell \nmid q}} \pi(t;q\ell,1) + O\bigg( \frac t{\log t} \bigg),  \label{formerly S1 and S2}
\end{align}
where the last step follows from the Brun--Titchmarsh theorem (see~\cite[Theorem 3.9]{mv07}) and the assumption $q\le \log^2 t$:
\begin{equation}  \label{BT usage}
\omega(q) \pi(t;q,1) \ll \omega(q) \frac{t}{\phi(q)\log(t/q)} \ll \frac{\omega(q)}{\phi(q)} \frac{t}{\log t} \ll \frac t{\log t}.
\end{equation}
By the Bombieri--Vinogradov estimate~\eqref{pi BV} with $A=1$ (noting that every modulus $q\ell$ in the sum is at most $t^{1/3}\log^2 t \ll t^{1/2}(\log t)^{-B}$),
\begin{align}
\sum_{\substack{\ell \leq t^{1/3} \\ \ell \nmid q}} \pi(t;q\ell,1) &= \sum_{\substack{\ell \leq t^{1/3} \\ \ell \nmid q}} \frac{\li(t)}{\phi(q\ell)} + O\bigg( \sum_{\substack{\ell \leq t^{1/3} \\ \ell \nmid q}} \bigg| \pi(t;q\ell,1) - \frac{\li(t)}{\phi(q\ell)} \bigg| \bigg) \notag \\
&= \frac{\li(t)}{\phi(q)} \sum_{\substack{\ell \leq t^{1/3} \\ \ell \nmid q}} \frac1{\ell-1} + O\bigg(\frac{t}{\log t}\bigg) \label{BV use to copy} \\
&= \frac{\li(t)}{\phi(q)} \big( \log\log t^{1/3} + O(\omega(q)) \big) + O\bigg(\frac{t}{\log t}\bigg) = \frac{t\log\log t}{\phi(q)\log t} + O\bigg(\frac{t}{\log t}\bigg) \notag
\end{align}
by partial summation. Together with the estimate~\eqref{formerly S1 and S2}, this evaluation establishes the claim~\eqref{q0 claim}.

Define $S(t) = \sum_{p \leq t} \omega_q(p) \omega_0(p)$ to be the left-hand side of equation~\eqref{q0 claim}. Noting that $\omega_q(p) = 0$ for all $p \leq q$, we use partial summation to estimate
\begin{align*}
\cov(\omega_q, \omega_{0}) = \sum_{q<p \leq z} \omega_q(p) \omega_0(p) \frac{1}{p} = \int_q^z \frac{1}{t} \, dS(t) = \frac{S(z)}{z} + \int_q^z \frac{S(t)}{t^2} \, dt.
\end{align*}
By equation~\eqref{q0 claim}, the first term satisfies
\[
\frac{S(z)}{z} = \frac{\log\log z}{\phi(q) \log z} + O\bigg(\frac{1}{\log z}\bigg) \ll 1
\]
while
\begin{align*}
\int_q^z \frac{S(t)}{t^2} \, dt &= \int_q^z \bigg( \frac{\log\log t}{\phi(q)\, t \log t} + O\bigg(\frac{1}{t \log t}\bigg) \bigg) \, dt \\
&= \frac{(\log\log t)^2}{2 \phi(q)} \bigg|_q^z + O\big(\log\log t \big|_q^z\big) = \frac{(\log\log z)^2}{2 \phi(q)} + O(\log\log z).
\end{align*}
as required.
\end{proof}

\begin{lemma}  \label{big cov lemma}
For $z>2$,
\[
\displaystyle \cov(\omega_{0}, \omega_{0}) = \frac{(\log\log z)^3}{3} + O\big( (\log\log z)^2 \big).
\]
\end{lemma}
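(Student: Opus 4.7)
The plan is to mirror the argument used for Lemma~\ref{medium cov lemma}, pushing through one more layer of prime-counting since now both factors carry an $\omega_0$. By equation~\eqref{cov with O(1)} together with $\omega_0(p) = \omega(p-1) \ll \log p$, it suffices to obtain an asymptotic formula for $\sum_{p \le z} \omega(p-1)^2 / p$, which I will do by first estimating
\[
T(t) = \sum_{p \le t} \omega(p-1)^2
\]
and then applying partial summation.

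To analyze $T(t)$, I write $\omega(m)^2 = \sum_{\ell_1 \mid m} \sum_{\ell_2 \mid m} 1$ (sums over primes) and split into the diagonal contribution $\ell_1=\ell_2$, which reproduces $\omega(m)$, and the off-diagonal contribution. The diagonal piece contributes $\sum_{p \le t} \omega(p-1) = \frac{t \log\log t}{\log t} + O(t/\log t)$ by the estimate~\eqref{q0 claim} already established (taking $q=1$, or equivalently appealing to the standard Erd{\H o}s--Pomerance bound). For the off-diagonal contribution, I invert the order of summation: at most $O(\pi(t))$ pairs have $\max\{\ell_1,\ell_2\} > t^{1/3}$ (since a number $p-1 \le t$ has at most two prime factors exceeding $t^{1/3}$), contributing $O(t/\log t)$ after crude estimates. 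For the remaining pairs with $\ell_1 \ne \ell_2 \le t^{1/3}$, the Bombieri--Vinogradov theorem~\eqref{pi BV} (with modulus $\ell_1\ell_2 \le t^{2/3}$) and partial summation give
\[
\sum_{\substack{\ell_1 \ne \ell_2 \le t^{1/3}}} \pi(t; \ell_1\ell_2, 1) = \li(t) \sum_{\substack{\ell_1 \ne \ell_2 \le t^{1/3}}} \frac{1}{\phi(\ell_1 \ell_2)} + O\bigg(\frac{t}{\log t}\bigg),
\]
and then
\[
\sum_{\substack{\ell_1 \ne \ell_2 \le t^{1/3}}} \frac{1}{(\ell_1-1)(\ell_2-1)} = \bigg(\sum_{\ell \le t^{1/3}} \frac{1}{\ell-1}\bigg)^{\!2} - \sum_{\ell \le t^{1/3}} \frac{1}{(\ell-1)^2} = (\log\log t)^2 + O(\log\log t)
\]
by Mertens. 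Combining these yields
\[
T(t) = \frac{t (\log\log t)^2}{\log t} + O\bigg(\frac{t \log\log t}{\log t}\bigg).
\]

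Finally, partial summation gives
\[
\cov(\omega_0, \omega_0) = \frac{T(z)}{z} + \int_2^z \frac{T(t)}{t^2}\, dt + O(1).
\]
The boundary term is $O(\log\log z)$. In the integral, the substitution $u = \log\log t$, $du = dt/(t \log t)$ converts the main term into $\int_{\log\log 2}^{\log\log z} u^2\, du = \frac{(\log\log z)^3}{3} + O(1)$, while the error integrand $\frac{\log\log t}{t \log t}$ contributes $O((\log\log z)^2)$. This yields the claimed formula.

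The only step requiring real care is the admissibility of the Bombieri--Vinogradov theorem for moduli $\ell_1 \ell_2$ of size up to $t^{2/3}$; I avoid this issue by truncating both $\ell_i$ at $t^{1/3}$ so that $\ell_1 \ell_2 \le t^{2/3} \le t^{1/2}(\log t)^{-B}$ for the relevant $B$, handling the tail with the simple observation that $p-1$ has at most two prime factors above $t^{1/3}$ and the Brun--Titchmarsh bound, exactly as in the proof of Lemma~\ref{medium cov lemma}.
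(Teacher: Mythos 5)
Your overall architecture mirrors the paper's proof exactly — estimate $T(t)=\sum_{p\le t}\omega(p-1)^2$ via Bombieri--Vinogradov, then partial summation — but there is a concrete error in the truncation exponent that makes your Bombieri--Vinogradov step inadmissible. You truncate each prime variable $\ell_i$ at $t^{1/3}$, so the moduli $\ell_1\ell_2$ range up to $t^{2/3}$, and you assert ``$\ell_1\ell_2 \le t^{2/3} \le t^{1/2}(\log t)^{-B}$''. This inequality is false: $t^{2/3} > t^{1/2} > t^{1/2}(\log t)^{-B}$ for all large $t$, so the moduli $\ell_1\ell_2$ near $t^{2/3}$ lie strictly outside the range covered by Theorem~\ref{bombvino}. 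As written, the error term in your evaluation of $\sum_{\ell_1 \ne \ell_2 \le t^{1/3}}\pi(t;\ell_1\ell_2,1)$ is not controlled.

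The fix is exactly what the paper does: truncate at $t^{1/5}$ rather than $t^{1/3}$ (any exponent strictly less than $1/4$ would do), so that $\ell_1\ell_2 \le t^{2/5} \le t^{1/2}(\log t)^{-B}$. The price is that $p-1 \le t$ can have up to $4$ (not $2$) prime factors exceeding $t^{1/5}$, but this only changes the $O(4)$ constant in the decomposition $\omega(p-1)=\sum_{\ell\mid p-1,\ \ell\le t^{1/5}}1 + O(4)$, and the resulting error terms are of the same size $O(t\log\log t/\log t)$. With that correction your computation of $T(t)$, and the partial summation at the end (including the substitution $u=\log\log t$ and the boundary-term estimate), agree with the paper's proof.
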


\begin{proof}
Now we emulate the proof of~\cite[Lemma 2.2]{ep85}. In preparation for a partial summation calculation, we first show that
\begin{equation}  \label{00 claim}
\sum_{p \leq t} \omega_0(p)^2 = \sum_{p \le t} \omega(p-1)^2 = \frac{t(\log\log t)^2}{\log t} + O\bigg(\frac{t | \!\log\log t|}{\log t}\bigg)
\end{equation}
for all $t>2$; again the first equality follows from Definition~\ref{omega0 def} of~$\omega_0$. Letting $\ell$ denote a variable of summation taking only prime values, and noting that at most $4$ primes greater than $p^{1/5}$ can divide $p-1$,
\begin{align*}
\sum_{p \le t} \omega(p-1)^2 &= \sum_{p \le t} \bigg( \sum_{\substack{\ell \mid (p-1) \\ \ell\le t^{1/5}}} 1 + O(4) \bigg)^2 \\
&= \sum_{p \le t} \sum_{\substack{\ell_1 \mid (p-1) \\ \ell\le t^{1/5}}} \sum_{\substack{\ell_2 \mid (p-1) \\ \ell\le t^{1/5}}} 1 + O\bigg( \sum_{p \le t} \sum_{\substack{\ell \mid (p-1) \\ \ell\le t^{1/5}}} 1 + \sum_{p \le t} 1 \bigg) \\
&= \sum_{\substack{\ell_1\le t^{1/5}}} \sum_{\substack{\ell_2\le t^{1/5}}} \sum_{\substack{p\le t \\ p\equiv1\mod{\ell_1} \\ p\equiv1\mod{\ell_2}}} 1 + O\bigg( \sum_{p \le t} \omega(p-1) + \pi(t) \bigg) \\
&= \sum_{\substack{\ell_1\le t^{1/5}}} \sum_{\substack{\ell_2\le t^{1/5} \\ \ell_2\ne\ell_1}} \pi(t;\ell_1\ell_2,1) + \sum_{\substack{\ell\le t^{1/5}}} \pi(t;\ell,1) + O\bigg( \frac{t|\!\log\log t|}{\log t} \bigg),
\end{align*}
where the error term in the last step was controlled using the $q=2$ case of equation~\eqref{q0 claim}.
Using the Bombieri--Vinogradov estimate~\eqref{pi BV} in a manner similar to the argument in equation~\eqref{BV use to copy} now yields
\begin{align*}
\sum_{p \le t} \omega(p-1)^2 &= \sum_{\substack{\ell_1\le t^{1/5}}} \sum_{\substack{\ell_2\le t^{1/5} \\ \ell_2\ne\ell_1}} \frac{\li(t)}{(\ell_1-1)(\ell_2-1)} + \sum_{\substack{\ell\le t^{1/5}}} \frac{\li(t)}{\ell-1} + O\bigg( \frac{t|\!\log\log t|}{\log t} \bigg) \\
&= \li(t) \bigg( \bigg( \sum_{\substack{\ell\le t^{1/5}}} \frac1{\ell-1} \bigg)^2 + O\bigg( \sum_{\substack{\ell\le t^{1/5}}} \frac1{\ell} \bigg) \bigg) + O\bigg( \frac{t|\!\log\log t|}{\log t} \bigg) \\
&= \li(t) \big( (\log\log t^{1/5})^2 + O(\log\log t) \big) + O\bigg( \frac{t|\!\log\log t|}{\log t} \bigg),
\end{align*}
which is enough to establish the claim~\eqref{00 claim}.

Define $S(t) = \sum_{p \leq t} \omega_0(p)^2$ to be the left-hand side of equation~\eqref{00 claim}, and again use partial summation to estimate
\begin{align*}
\cov(\omega_0, \omega_{0}) = \sum_{q<p \leq z} \omega_0(p)^2 \frac{1}{p} = \int_2^z \frac{1}{t} \, dS(t) = \frac{S(z)}{z} + \int_2^z \frac{S(t)}{t^2} \, dt.
\end{align*}
By equation~\eqref{00 claim}, the first term satisfies
\[
\frac{S(z)}{z} = \frac{(\log\log z)^2}{\log z} + O\bigg(\frac{|\!\log\log z|}{\log z}\bigg) \ll 1
\]
while
\begin{align*}
\int_2^z \frac{S(t)}{t^2} \, dt &= \int_2^z \bigg( \frac{(\log\log t)^2}{t \log t} + O\bigg(\frac{|\!\log\log t|}{t \log t}\bigg) \bigg) \, dt \\
&= \frac{(\log\log t)^3}{3} \bigg|_2^z + O\big( (\log\log t)^2 \big|_2^z\big) = \frac{(\log\log z)^3}3 + O\big( (\log\log z)^2 \big)
\end{align*}
as required.
\end{proof}

When we expand the $h$th power in the calculation of the moments $M_h(x)$ (as in equation~\eqref{two way}), we will need to estimate products of the additive functions $\omega_q$ ($q\ge0$) from Definitions~\ref{omega q def} and~\ref{omega0 def}, summed over many prime variables. Because of the presence of the multiplicative function $H$, defined momentarily, in these sums, it will be important how many distinct prime values are taken by these prime variables. The next three lemmas provide the details.

\begin{definition}  \label{H def}
Define a multiplicative function $H(n)$ by setting, for each prime power $p^\gamma$,
\begin{equation*}
H(p^\gamma) = \frac{1}p \bigg( 1-\frac{1}p \bigg)^\gamma + \bigg( {-}\frac{1}p \bigg)^\gamma \bigg( 1-\frac{1}p \bigg).
\end{equation*}
For any prime $p$, we note that $H(p^2) = \frac1p(1-\frac1p)$ and $H(p)=0$; in particular, $H(n)=0$ unless $n$ is squarefull. It is easy to check that $0\le H(p^\gamma) \le H(p^2)$ for every prime $p$ and every positive integer~$\gamma$.
\end{definition}

\begin{lemma} \label{paired off lemma}
Let $k$ be a positive even integer, and let $0\le\ell\le k$ be an integer. Suppose that $g_1=\cdots=g_\ell=\omega_0$, while the remaining functions $g_j$ ($\ell<j\le k)$ equal $\omega_{q_j}$ for some prime powers $q_j$. Then
\begin{multline} \label{paired off formula}
\sum_{\substack{p_1,\dots,p_k \leq z \\ p_1\cdots p_k \text{ squarefull} \\ \#\{p_1,\dots,p_k\} = k/2}} H(p_1\cdots p_k) g_1(p_1) \cdots g_k(p_k) \\
= \frac 1{(k/2)!} \sum_{\tau\in T_k} \prod_{j=1}^{k/2} \cov(g_{\Upsilon_1(j)},g_{\Upsilon_2(j)}) + O_k\big((\log\log x)^{(2\ell + k)/2 - 1}\big).
\end{multline}
\end{lemma}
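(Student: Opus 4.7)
The plan is to reparametrize the left-hand side of \eqref{paired off formula} by interpreting its structural constraints, reduce the resulting expression to a product of covariances, and then control the error coming from a distinctness constraint via inclusion-exclusion. The combinatorial setup goes as follows. A $k$-tuple $(p_1, \dots, p_k)$ of primes at most $z$ with $p_1 \cdots p_k$ squarefull and exactly $k/2$ distinct values must have each distinct value appearing exactly twice, since the $k$ positions must contribute at least $2$ copies to each of $k/2$ values and $2\cdot(k/2)=k$. Hence the positions partition into $k/2$ unordered pairs of equal primes, and an element $\tau \in T_k$ encodes such a partition together with a labelling of the pairs by $\{1,\dots,k/2\}$. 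Since the $(k/2)!$ possible labellings all yield the same $k$-tuple, each $k$-tuple is counted $(k/2)!$ times as $\tau$ ranges over $T_k$. Writing $q_j$ for the common prime value at the two positions labelled $j$ by $\tau$, and using the multiplicativity of $H$ along with $H(q^2) = \frac{1}{q}(1-\frac{1}{q})$, the left-hand side of \eqref{paired off formula} equals
\[
\frac{1}{(k/2)!} \sum_{\tau \in T_k} \sum_{\substack{q_1, \dots, q_{k/2} \leq z \\ q_j \text{ distinct primes}}} \prod_{j=1}^{k/2} \frac{g_{\Upsilon_1(j)}(q_j)\, g_{\Upsilon_2(j)}(q_j)}{q_j}\left(1 - \frac{1}{q_j}\right).
\]

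If the distinctness constraint on $(q_1, \dots, q_{k/2})$ were dropped, the inner sum would factorize as $\prod_j \cov(g_{\Upsilon_1(j)}, g_{\Upsilon_2(j)})$, yielding the desired main term after summing over $\tau$ and dividing by $(k/2)!$. The corrections are then obtained via inclusion-exclusion over set partitions $\pi$ of $\{1,\dots,k/2\}$; for each non-discrete partition $\pi$ and each block $B \in \pi$ of size $r \geq 2$, the corresponding $r$ distinct sums collapse into a single sum of the form $\sum_{q \leq z} \frac{(1-1/q)^r}{q^r} \prod_{j \in B} g_{\Upsilon_1(j)}(q) g_{\Upsilon_2(j)}(q)$, which is bounded by $\sum_q (\log q)^{2r}/q^r \ll_k 1$ because each $g_{\Upsilon_i(j)}(q) \ll \log q$ (as noted in Definition~\ref{omega0 def}) and $r \geq 2$.

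To verify that these corrections fit within the claimed error, recall from Lemmas~\ref{little cov lemma}--\ref{big cov lemma} that $\cov(g_{\Upsilon_1(j)}, g_{\Upsilon_2(j)}) \asymp (\log\log x)^{e_j}$, where $e_j = 1 + \#\{i \in \{\Upsilon_1(j), \Upsilon_2(j)\} \colon g_i = \omega_0\} \in \{1, 2, 3\}$. Since exactly $\ell$ of the functions $g_1, \dots, g_k$ equal $\omega_0$, writing $a, b, c$ for the number of pairs containing two, one, or zero copies of $\omega_0$ respectively gives $2a+b=\ell$ and $a+b+c=k/2$, so that $\sum_j e_j = 3a+2b+c = (2\ell+k)/2$; this confirms that the main term has order $(\log\log x)^{(2\ell+k)/2}$. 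Any non-discrete partition in the inclusion-exclusion replaces at least two covariance factors (each $\gg \log\log x$) by an $O_k(1)$ block sum, so each correction is $O_k((\log\log x)^{(2\ell+k)/2 - 2})$, which is comfortably absorbed into the claimed error $O_k((\log\log x)^{(2\ell+k)/2 - 1})$.

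The principal obstacle is the combinatorial bookkeeping: justifying the factor $1/(k/2)!$ and organizing the inclusion-exclusion over collisions of the auxiliary primes $q_j$ in a way that keeps the dependence between factors transparent. Once this structure is in place, the analytic ingredients are essentially immediate from the covariance evaluations of Section~\ref{term estimation} and the elementary convergence of $\sum_q (\log q)^{2r}/q^r$ for $r \geq 2$.
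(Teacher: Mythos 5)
Your proposal is correct, and the opening combinatorial reformulation is exactly the one in the paper: a $k$-tuple with $k/2$ distinct primes in a squarefull product must have each prime appear exactly twice, the resulting pairing-with-labeling is counted by $T_k$ with each tuple appearing $(k/2)!$ times, and $H(q_1^2\cdots q_{k/2}^2)=\prod_j \tfrac1{q_j}(1-\tfrac1{q_j})$. Where you diverge is in how the distinctness constraint on $(q_1,\dots,q_{k/2})$ is removed. The paper peels off one variable at a time: summing over $q_{k/2}$ restricted to avoid $q_1,\dots,q_{k/2-1}$ produces $\cov(g_{\Upsilon_1(k/2)},g_{\Upsilon_2(k/2)})+O_k(1)$, and iterating gives $\frac1{(k/2)!}\sum_\tau\prod_j(\cov(\cdot,\cdot)+O(1))$; expanding this product, any term that drops at least one covariance loses at least one power of $\log\log z$, yielding the stated error with exponent $(2\ell+k)/2-1$. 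You instead compare directly to the fully unrestricted sum, which factorizes into the product of covariances, and control the overcount by M\"obius inversion on the partition lattice of $\{1,\dots,k/2\}$: every non-discrete partition has some block $B$ of size $r\ge 2$, whose collapsed sum $\sum_q q^{-r}(1-1/q)^r\prod_{j\in B}g_{\Upsilon_1(j)}(q)g_{\Upsilon_2(j)}(q)\ll\sum_q(\log q)^{2r}/q^r\ll_k 1$, while at least two indices leave the ``singleton'' set, so the exponent drops by at least two. Both routes are valid; yours is somewhat more explicit combinatorially and obtains a marginally sharper error exponent $(2\ell+k)/2-2$ (not needed), at the cost of invoking the partition-lattice M\"obius function, whose coefficients you should note are $O_k(1)$ before absorbing them. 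The paper's iterative version avoids that machinery but requires tracking the uniformity of the $O(1)$'s at each stage. Your accounting of $\sum_j e_j=(2\ell+k)/2$ with $e_j\in\{1,2,3\}$ matching Lemmas~\ref{little cov lemma}--\ref{big cov lemma} is correct and parallels the paper's ``examination of Lemmas~\ref{little cov lemma}--\ref{big cov lemma}.''
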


\begin{proof}
All implicit constants in this proof may depend upon~$k$.
To each $k$-tuple $(p_1,\dots,p_k)$ counted by the sum on the left-hand side, we can uniquely associate a $(k/2)$-tuple $(q_1,\dots,q_{k/2})$ of primes satisfying $q_1 < \cdots < q_{k/2}$ such that each $q_j$ equals exactly two of the~$p_i$. This correspondence defines a unique $\tau\in T_k$, for which $\tau(i)$ equals the integer $j$ such that $p_i=q_j$. Therefore, by Definition~\ref{H def},
\begin{align*}
\sum_{\substack{p_1,\dots,p_k \leq z \\ p_1\cdots p_k \text{ squarefull} \\ \#\{p_1,\dots,p_k\} = k/2}} H(p_1\cdots p_k) & g_1(p_1) \cdots g_k(p_k) \\
&= \sum_{\tau\in T_k} \sum_{\substack{q_1 < \cdots < q_{k/2} \leq z}} H(q_1^2 \cdots q_{k/2}^2) g_1(q_{\tau(1)}) \cdots g_k(q_{\tau(k)}) \\
&= \frac1{(k/2)!} \sum_{\tau\in T_k} \sum_{\substack{q_1, \dots, q_{k/2} \leq z \\ q_1,\dots,q_{k/2} \text{ distinct}}} H(q_1^2 \cdots q_{k/2}^2) g_1(q_{\tau(1)}) \cdots g_k(q_{\tau(k)}) \\
&= \frac1{(k/2)!} \sum_{\tau\in T_k} \sum_{\substack{q_1, \dots, q_{k/2} \leq z \\ q_1,\dots,q_{k/2} \text{ distinct}}} g_1(q_{\tau(1)}) \cdots g_k(q_{\tau(k)}) \prod_{j=1}^{k/2} \frac{1}{q_j} \bigg( 1-\frac{1}{q_j} \bigg) \\
&= \frac1{(k/2)!} \sum_{\tau\in T_k} \sum_{\substack{q_1, \dots, q_{k/2} \leq z \\ q_1,\dots,q_{k/2} \text{ distinct}}} \prod_{j=1}^{k/2} g_{\Upsilon_1(j)}(q_j) g_{\Upsilon_2(j)}(q_j) \frac{1}{q_j} \bigg( 1-\frac{1}{q_j} \bigg).
\end{align*}
If we fix $\tau$ and $q_1,\dots,q_{k/2-1}$, the innermost sum over $q_{k/2}$ is
\begin{align*}
\sum_{\substack{q_{k/2} \leq z \\ q_{k/2} \notin \{q_1,\dots,q_{k/2-1}\}}} & g_{\Upsilon_1(k/2)}(q_{k/2}) g_{\Upsilon_2(k/2)}(q_{k/2}) \frac{1}{q_{k/2}} \bigg( 1-\frac{1}{q_{k/2}} \bigg) \\
&= \cov(g_{\Upsilon_1(k/2)},g_{\Upsilon_2(k/2)}) - \sum_{j=1}^{k/2-1} g_{\Upsilon_1(k/2)}(q_j) g_{\Upsilon_2(k/2)}(q_j) \frac{1}{q_j} \bigg( 1-\frac{1}{q_j} \bigg) \\
&= \cov(g_{\Upsilon_1(k/2)},g_{\Upsilon_2(k/2)}) + O(1),
\end{align*}
since each $g_i(q) \ll \log z$ for $q\le z$.
Summing in turn over $q_{k/2-1},\dots,q_1$ in the same way, we obtain
\begin{align*}
\sum_{\substack{p_1,\dots,p_k\leq z \\ p_1\cdots p_k \text{ squarefull} \\ \#\{p_1,\dots,p_k\} = k/2}} H( & p_1\cdots p_k) g_1(p_1) \cdots g_k(p_k) \\
&= \frac 1{(k/2)!} \sum_{\tau\in T_k} \prod_{j=1}^{k/2} \big( \cov(g_{\Upsilon_1(j)},g_{\Upsilon_2(j)}) + O(1) \big).
\end{align*}
Upon multiplying out the product corresponding to some $\tau \in T_k$, we obtain the leading term
\[
\prod_{j=1}^{k/2} \cov(g_{\Upsilon_1(j)},g_{\Upsilon_2(j)})
\]
together with terms that involve at most $k/2-1$ covariances. An examination of Lemmas~\ref{little cov lemma}--\ref{big cov lemma} reveals that the order of magnitude of the leading term (as a function of $z$) is $(\log\log z)^{(2\ell + k)/2}$, regardless of how the $g_j$ are paired with one another by~$\tau$, and that every non-leading term is $\ll (\log\log z)^{(2\ell + k)/2-1}$ uniformly in the possibilities for the $g_j$, We conclude that
\begin{align*}
\sum_{\substack{p_1,\dots,p_k \leq z \\ p_1\cdots p_k \text{ squarefull} \\ \#\{p_1,\dots,p_k\} = k/2}} H( & p_1\cdots p_k) g_1(p_1) \cdots g_k(p_k) \\
&= \frac 1{(k/2)!} \sum_{\tau\in T_k} \prod_{j=1}^{k/2} \cov(g_{\Upsilon_1(j)},g_{\Upsilon_2(j)}) + O_k((\log\log x)^{(2\ell + k)/2 - 1})
\end{align*}
as desired (where we have used $z = x^{1/2k}$ in the error term).
\end{proof}

\begin{lemma}
\label{one omega0 is big lemma}
Let $k$ be a positive integer, and let $g_1,\dots,g_k$ be functions satisfying $g_1(p),\dots,g_k(p) \ll \log p$. Then for any $1\le i\le k$,
\begin{align*}
\sum_{\substack{p_1,\dots,p_k \leq z \\ \omega_0(p_i) > 4k\log\log z}} H(p_1\cdots p_k) g_1(p_1) \cdots g_k(p_k) \ll_k \frac1{(\log z)^{1/2}}.
\end{align*}
\end{lemma}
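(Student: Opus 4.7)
The plan is to combine Rankin's exponential-moment trick, which converts the tail condition on $\omega_0(p_i)$ into the multiplicative weight $2^{\omega(p_i-1)}$, with a block decomposition of the squarefull $k$-tuples on which $H$ is supported. Since $H(p)=0$ for every prime $p$, the sum is identically zero when $k=1$, so I may assume $k \ge 2$. First using $|g_j(p)| \ll_k \log p$ to absorb the~$g_j$, and then applying
\[
\mathbf{1}[\omega(p_i - 1) > 4k \log\log z] \le 2^{\omega(p_i-1) - 4k\log\log z} = (\log z)^{-4k\log 2}\cdot 2^{\omega(p_i-1)},
\]
it suffices to bound
\[
(\log z)^{-4k\log 2} \sum_{p_1, \ldots, p_k \le z} H(p_1\cdots p_k)\cdot 2^{\omega(p_i-1)} \prod_{j=1}^k \log p_j.
\]

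Since $H$ vanishes off squarefull integers, every contributing tuple groups into blocks of coinciding primes, each of size $\ge 2$. I would decompose by partitions $\pi = \{B_1, \ldots, B_s\}$ of $\{1,\ldots,k\}$ (with $|B_\nu| = m_\nu \ge 2$), writing $q_\nu$ for the common prime of block $B_\nu$. Since all summands are nonnegative, dropping the distinctness constraint on the $q_\nu$ provides a valid upper bound in which the $\pi$-contribution factors as a product of one-variable sums. Each block $B_\nu$ not containing $i$ contributes $\sum_{q\le z} H(q^{m_\nu})(\log q)^{m_\nu} \ll (\log z)^{m_\nu}$, using $H(q^m) \ll 1/q$ (from Definition~\ref{H def}) together with $\sum_{q \le z}(\log q)^m/q \ll (\log z)^m$ from partial summation via the prime number theorem.

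For the block containing $i$, I would show $\sum_{q\le z} 2^{\omega(q-1)}(\log q)^{m_i}/q \ll (\log z)^{m_i+1}$ by writing $2^{\omega(q-1)} = \sum_{d\mid q-1}\mu^2(d)$, swapping summations, applying Lemma~\ref{mertenspartial} together with partial summation to the inner sum $\sum_{q\le z,\, q \equiv 1 \mod{d}}(\log q)^{m_i}/q \ll (\log z)^{m_i}/\phi(d)$ (valid for $d \le \sqrt z$ after a cancellation of the naive $\log\log z$ leading term), and summing against the Mertens-type estimate $\sum_{d\le \sqrt z}\mu^2(d)/\phi(d) \asymp \log z$; the tail $d>\sqrt z$ is handled separately by Brun--Titchmarsh. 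Multiplying the block estimates and summing over the $O_k(1)$ partitions of $\{1,\ldots,k\}$ gives a total bound
\[
\ll_k (\log z)^{-4k\log 2}\cdot (\log z)^{k+1} = (\log z)^{k+1-4k\log 2}.
\]
The inequality $4k\log 2 > k + 3/2$ holds for every $k\ge 1$ (in the tightest case $k=1$ the exponent is $2-4\log 2 \approx -0.77 < -1/2$), so the desired bound $\ll_k (\log z)^{-1/2}$ follows.

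The technical heart of the argument is the estimate $\sum_{q\le z} 2^{\omega(q-1)}(\log q)^{m_i}/q \ll_k (\log z)^{m_i+1}$ with implicit constant uniform in $m_i \le k$. The range $d \le \sqrt z$ yields cleanly to Lemma~\ref{mertenspartial} and a careful partial summation, but controlling the tail $d > \sqrt z$ is the delicate part: trivial divisor-function bounds overshoot, so one must exploit Brun--Titchmarsh combined with the structural fact that any prime $q \le z$ has at most one prime divisor of $q-1$ exceeding $\sqrt z$. A secondary observation is that the constant $4k$ in the lemma's hypothesis is calibrated so that Rankin's trick with parameter $\lambda = 2$ saves a factor $(\log z)^{4k\log 2}$, precisely enough to beat the $(\log z)^{k+1}$ loss from the unconstrained blocks with margin $(\log z)^{(4\log 2 - 1)k - 3/2}$, which stays positive for every $k \ge 1$.
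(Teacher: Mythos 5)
Your strategy (Rankin's trick with parameter $2$, followed by a block decomposition of the squarefull tuples) is genuinely different from the paper's, which bounds $\sum_{q\le z,\ \omega(q-1)>4k\log\log z}1/q$ directly via an Erd\H os--Nicolas counting result for integers with many prime factors. Your decomposition into blocks and the final exponent bookkeeping are fine, and the Rankin savings $(\log z)^{-4k\log 2}$ does leave enough margin. However, there are two real gaps in the technical heart of your argument, the claimed bound $\sum_{q\le z}2^{\omega(q-1)}(\log q)^{m}/q \ll (\log z)^{m+1}$.

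First, your claim that the range $d\le\sqrt z$ ``yields cleanly to Lemma~\ref{mertenspartial} and a careful partial summation'' is not correct as stated. The cancellation of the leading $\log\log z$ term is real, and one does obtain
\begin{equation*}
\sum_{\substack{q\le z\\ q\equiv 1\hspace{-2mm}\mod d}}\frac{(\log q)^m}{q} = \frac{(\log z)^m}{m\,\phi(d)} + O\bigg(\frac{(\log z)^m\log d}{\phi(d)}\bigg),
\end{equation*}
but the $O$-term, when summed against $\mu^2(d)$ over $d\le\sqrt z$, produces $\sum_{d\le\sqrt z}\mu^2(d)\log d/\phi(d)\asymp(\log z)^2$, i.e.\ a contribution of size $(\log z)^{m+2}$, one power of $\log z$ larger than your target main term $(\log z)^{m+1}$. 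Getting the sharp $(\log z)^{m+1}$ genuinely requires averaging over moduli (Bombieri--Vinogradov or Siegel--Walfisz plus BV), not just Lemma~\ref{mertenspartial} pointwise. Second, your sketch of the tail $d>\sqrt z$ invokes ``the structural fact that any prime $q\le z$ has at most one prime divisor of $q-1$ exceeding $\sqrt z$,'' but the $d$'s you are summing over are arbitrary squarefree divisors of $q-1$, not prime divisors, and a shifted prime $q-1$ can have many squarefree divisors exceeding $\sqrt z$. The right tool there is divisor switching \`a la the Titchmarsh divisor problem: a squarefree $d\mid(q-1)$ with $d>\sqrt z$ corresponds to a cofactor below $\sqrt z$, which reduces the tail to the head sum.

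These gaps are repairable without importing BV: replacing your claimed $(\log z)^{m+1}$ by the honestly provable $(\log z)^{m+2}$, and handling the tail by divisor switching, still gives a total bound $(\log z)^{k+2-4k\log 2}$, and the inequality $4k\log 2 > k+5/2$ holds for all $k\ge 2$ (your $k=1$ case being vacuous since $H$ vanishes at primes). So the overall strategy survives, but you should not present the $d\le\sqrt z$ range as unproblematic, and the prime-divisor observation you cite does not do what you need for $d>\sqrt z$.
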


\begin{proof}
All implicit constants in this proof may depend upon~$k$.
Suppose that $q_1,\cdots,q_s$ are the distinct primes such that $\{p_1,\dots,p_k\} = \{q_1,\cdots,q_s\}$, and let $m$ denote any integer such that $q_m=p_i$.
From Definition~\ref{H def}, we know that $0\le H(p_1\cdots p_k) \le H(q_1^2\cdots q_s^2) \le 1/q_1\cdots q_s$. Therefore, from the hypothesis on the sizes of the~$g_j(p)$,
\begin{align}
\sum_{\substack{p_1,\dots,p_k \leq z \\ \omega_0(p_i) > 4k\log\log z}} & H(p_1\cdots p_k) g_1(p_1) \cdots g_k(p_k) \notag \\
&\ll_k (\log z)^k \sum_{s=1}^k \sum_{m=1}^s \sum_{\substack{q_1,\dots,q_s \leq z \\ \omega_0(q_m) > 4k\log\log z}} \frac1{q_1\cdots q_s} \notag \\
&\ll_k (\log z)^k \sum_{s=1}^k \sum_{m=1}^s \bigg( \sum_{\substack{q_m \le z \\ \omega_0(q_m) > 4k\log\log z}} \frac1{q_m} \bigg) \prod_{\substack{1\le i\le k \\ i\ne m}} \sum_{q_i\le z} \frac1{q_i} \notag \\
&\ll_k (\log z)^k \sum_{s=1}^k (\log\log z)^s \sum_{m=1}^s \sum_{\substack{q_m \le z \\ \omega_0(q_m) > 4k\log\log z}} \frac1{q_m}  \label{pre Nicolas}
\end{align}
by Mertens's theorem. Note that
\[
\sum_{\substack{q_m \le z \\ \omega_0(q_m) > 4k\log\log z}} \frac1{q_m} = \sum_{\substack{q_m \le z \\ \omega(q_m-1) > 4k\log\log z}} \frac1{q_m} \le \sum_{\substack{n \le z \\ \omega(n) > 4k\log\log z}} \frac1n.
\]
A result of Erd{\H o}s and Nicolas~\cite{Erd1978-1979} implies that the number of $n \leq x$ satisfying $\omega(n) > 4k \log\log x$ is $\ll {x}/{(\log x)^{1 + 4k\log4k - 4k}}$; partial summation then implies that the right-hand sum is $\ll 1/(\log x)^{4k\log4k - 4k}$. Equation~\eqref{pre Nicolas} therefore implies
\[
\sum_{\substack{p_1,\dots,p_k \leq z \\ \omega_0(p_i) > 4k\log\log z}} H(p_1\cdots p_k) g_1(p_1) \cdots g_k(p_k) \ll_k (\log z)^k (\log\log z)^k \frac1{(\log z)^{4k\log 4k-4k}},
\]
and the lemma follows from the fact that $4k\log4k-5k>\frac12$ for $k\ge1$.
\end{proof}

\begin{lemma}
\label{unpaired off lemma}
Let $k$ be a positive integer, and let $0\le\ell\le k$ be an integer. Suppose that $g_1=\cdots=g_\ell=\omega_0$, while the remaining functions $g_j$ ($\ell<j\le k)$  equal $\omega_{q_j}$ for some prime powers $q_j$. When $k$ is even,
\begin{equation*}
\sum_{\substack{p_1,\dots,p_k \leq z \\ p_1\cdots p_k \text{ squarefull} \\ \#\{p_1,\dots,p_k\} < k/2}} H(p_1\cdots p_k) g_1(p_1) \cdots g_k(p_k) \ll_k (\log\log x)^{(2\ell + k)/2-1},
\end{equation*}
while when $k$ is odd,
\begin{equation*}
\sum_{\substack{p_1,\dots,p_k \leq z \\ p_1\cdots p_k \text{ squarefull} \\ \#\{p_1,\dots,p_k\} < k/2}} H(p_1\cdots p_k) g_1(p_1) \cdots g_k(p_k) \ll_k (\log\log x)^{(2\ell + k - 1)/2}.
\end{equation*}
\end{lemma}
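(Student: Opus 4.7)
My plan follows the same skeleton as Lemma \ref{paired off lemma}, with the key observation that the hypothesis $\#\{p_1,\dots,p_k\}<k/2$ forces at least one block of the equality-pattern partition of $\{1,\dots,k\}$ to have size $\geq 3$ (in the even case), costing a factor of $\log\log z$ relative to the perfectly paired sum. Only an upper bound is needed, so many of the fussy steps of Lemma \ref{paired off lemma} can be replaced by crude inequalities.

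First, I would invoke Lemma \ref{one omega0 is big lemma} a total of $k$ times, once for each index $i\in\{1,\dots,k\}$, and use the union bound to conclude that the tuples satisfying $\omega_0(p_i)>4k\log\log z$ for some $i$ contribute only $O_k((\log z)^{-1/2})$, which is absorbed harmlessly into the claimed bound. Restrict from now on to the \emph{typical} range where $\omega_0(p_i)\le 4k\log\log z$ for every $i$, so that $|g_i(p_i)|\le 4k\log\log z$ when $i\le\ell$ and $|g_i(p_i)|\le 1$ when $i>\ell$.

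Next, sort the remaining tuples by the set partition $\pi=\{B_1,\dots,B_s\}$ of $\{1,\dots,k\}$ determined by the equivalence $i\sim i'$ iff $p_i=p_{i'}$. Squarefullness of $p_1\cdots p_k$ forces $|B_j|\geq 2$ for each $j$, hence $s\le\lfloor k/2\rfloor$; the extra hypothesis $\#\{p_1,\dots,p_k\}<k/2$ upgrades this to $s\le k/2-1$ when $k$ is even, while for odd $k$ the bound $s\le(k-1)/2$ is already forced by squarefullness. For each such partition, the tuples matching $\pi$ correspond to assigning distinct primes $q_1,\dots,q_s$ to the blocks $B_1,\dots,B_s$, and Definition \ref{H def} gives $H(p_1\cdots p_k)=\prod_j H(q_j^{|B_j|})\le\prod_j H(q_j^2)\le\prod_j 1/q_j$. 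Dropping the distinctness of the $q_j$ and writing $\ell_j=|B_j\cap[1,\ell]|$ (so $\sum_j\ell_j=\ell$), I would bound the $\pi$-contribution by
\[
\prod_{j=1}^s\sum_{q\le z}\frac{1}{q}\prod_{i\in B_j}|g_i(q)|\;\le\;\prod_{j=1}^s(4k\log\log z)^{\ell_j}\sum_{q\le z}\frac{1}{q}\;\ll_k\;(\log\log z)^{\ell+s}.
\]

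Finally, since there are only finitely many (depending on $k$) such partitions and since $\log\log z=\log\log x+O(1)$ with $z=x^{1/2k}$, summing over $\pi$ yields a total bound of $O_k((\log\log x)^{\ell+s_{\max}})$, which is $(\log\log x)^{(2\ell+k)/2-1}$ in the even case (using $s_{\max}=k/2-1$) and $(\log\log x)^{(2\ell+k-1)/2}$ in the odd case (using $s_{\max}=(k-1)/2$). The main obstacle is purely notational bookkeeping: tracking $\ell_j$ across blocks and verifying the exponent comes out correctly in both parities. There is no analytic difficulty, as the heart of the argument is the combinatorial observation that every missing pair in the partition spends one Mertens-type factor of $\log\log z$ but gains nothing from the covariance asymptotics of Lemmas \ref{little cov lemma}--\ref{big cov lemma}.
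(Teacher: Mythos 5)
Your proof is correct and follows essentially the same approach as the paper: truncate via Lemma~\ref{one omega0 is big lemma}, classify the surviving tuples by the equality pattern of the $p_i$ (your set partitions $\pi$ are the paper's compositions $\bgamma$ together with the maps $\tau\in T_\bgamma$), bound $H$ by passing to exponent $2$, and exploit $s\le k/2-1$ in the even case (resp.\ $s\le(k-1)/2$ in the odd case). The only cosmetic deviation is that you bound every $g$-factor pointwise and apply Mertens to each block's $q$-sum, whereas the paper retains two factors $g_{\Upsilon_1(j)},g_{\Upsilon_2(j)}$ per block and invokes the covariance estimates of Lemmas~\ref{little cov lemma}--\ref{big cov lemma}; both routes yield the same exponent $\ell+s$.
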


\noindent We remark that when $k$ is odd, the condition of summation $\#\{p_1,\dots,p_k\} < k/2$ is always satisfied; we have nevertheless included the condition, for later convenience.

\begin{proof}
All implicit constants in this proof may depend upon~$k$. We begin by noting that by Lemma~\ref{one omega0 is big lemma}, it suffices to consider the sum on the left-hand side with the extra summation condition $\max \omega_0(p_i) \leq 4k\log\log z$ inserted.

To each $k$-tuple $(p_1,\dots,p_k)$ counted by the sum on the left-hand side, we associate the positive integer $s = \#\{p_1,\dots,p_k\}$, the primes $q_1<\cdots<q_s$ such that $\{q_1,\dots,q_s\} = \{p_1,\dots,p_k\}$, and the integers $\gamma_1,\dots,\gamma_s\ge2$ such that $q_j$ equals exactly $\gamma_j$ of the~$p_i$; note that $\bgamma = (\gamma_1,\dots,\gamma_s)$ is a composition, not a partition, of $k$, since we are not assuming any monotonicity of the $\gamma_j$. Let $T_\bgamma$ denote the set of functions from $\{1,\dots,k\}$ to $\{1,\dots,s\}$ such that for each $1\le j\le s$, exactly $\gamma_j$ elements of $\{1,\dots,k\}$ are mapped to~$j$. Given any $\tau\in T_\bgamma$, define $\Upsilon_1(j)$ and $\Upsilon_2(j)$ to be two distinct preimages of $j$ in $\{1,\dots,k\}$; we will never need to know exactly which two preimages or to distinguish between the two. Finally, for any such $\tau$, define $\ell'$ to be the number of functions among $g_{\Upsilon_1(1)},g_{\Upsilon_2(1)},\dots,g_{\Upsilon_1(s)},g_{\Upsilon_2(s)}$ that equal $\omega_0$, and set $M_\tau = (4k\log\log z)^{\ell-\ell'}$.

First, observe that
\begin{align*}
\sum_{\substack{p_1,\dots,p_k \leq z \\ p_1\cdots p_k \text{ squarefull} \\ \#\{p_1,\dots,p_k\} < k/2 \\ \max \omega_0(p_i) \le 4k\log\log z}} & H(p_1\cdots p_k) g_1(p_1) \cdots g_k(p_k) \\
&= \sum_{1\le s<k/2} \sum_{\substack{q_1 < \cdots < q_s \le z \\ \max \omega_0(q_i) \le 4k\log\log z}} \sum_{\substack{\gamma_1,\dots,\gamma_s\ge2 \\ \gamma_1+\cdots+\gamma_s=k}} H(q_1^{\gamma_1}\cdots q_s^{\gamma_s}) \sum_{\tau\in T_\bgamma} g_1(q_{\tau(1)}) \cdots g_k(q_{\tau(k)}).
\end{align*}
By Definition~\ref{H def}, we may bound $H(q_1^{\gamma_1}\cdots q_s^{\gamma_s})$ by $H(q_1^2\cdots q_s^2)$. Moreover, in the innermost summand, we retain all of the factors of the form $g_{\Upsilon_1(j)}(q_j)$ and $g_{\Upsilon_2(j)}(q_j)$ while bounding all of the other $g_i(q_{\tau(i)})$ by their pointwise upper bounds, which results in a factor of $M_\tau$:
\begin{align*}
\sum_{\substack{p_1,\dots,p_k \leq z \\ p_1\cdots p_k \text{ squarefull} \\ \#\{p_1,\dots,p_k\} < k/2 \\ \max \omega_0(p_i) \le 4k\log\log z}} & H(p_1\cdots p_k) g_1(p_1) \cdots g_k(p_k) \\
&\le \sum_{1\le s<k/2} \sum_{\substack{q_1 < \cdots < q_s \le z}} \sum_{\substack{\gamma_1,\dots,\gamma_s\ge2 \\ \gamma_1+\cdots+\gamma_s=k}} H(q_1^2\cdots q_s^2) \sum_{\tau\in T_\bgamma} M_\tau \prod_{j=1}^s g_{\Upsilon_1(j)}(q_j) g_{\Upsilon_2(j)}(q_j) \\
&= \sum_{1\le s<k/2} \frac1{s!} \sum_{\substack{ q_1,\dots,q_s \le z}} \sum_{\substack{\gamma_1,\dots,\gamma_s\ge2 \\ \gamma_1+\cdots+\gamma_s=k}} \sum_{\tau\in T_\bgamma} M_\tau \prod_{j=1}^s g_{\Upsilon_1(j)}(q_j) g_{\Upsilon_2(j)}(q_j) \frac{1}{q_j} \bigg( 1 - \frac{1}{q_j} \bigg).
\end{align*}
Moving the sum over the $q_j$ to the inside and summing, we obtain
\begin{align*}
\sum_{\substack{p_1,\dots,p_k \leq z \\ p_1\cdots p_k \text{ squarefull} \\ \#\{p_1,\dots,p_k\} < k/2 \\ \max \omega_0(p_i) \le 4k\log\log z}} H( & p_1\cdots p_k) g_1(p_1) \cdots g_k(p_k) \\
&\le \sum_{1\le s<k/2} \frac{1}{s!} \sum_{\substack{\gamma_1,\dots,\gamma_s\ge2 \\ \gamma_1+\cdots+\gamma_s=k}} \sum_{\tau\in T_\bgamma} M_\tau \prod_{j=1}^s \cov(g_{\Upsilon_1(j)},g_{\Upsilon_2(j)}).
\end{align*}
An examination of Lemmas~\ref{little cov lemma}--\ref{big cov lemma} reveals that each product on the right-hand side is $\ll (\log\log z)^{s + \ell'}$ regardless of how the $g_j$ are paired with one another by~$\tau$; consequently,
\begin{align*}
M_\tau \prod_{j=1}^s \cov(g_{\Upsilon_1(j)},g_{\Upsilon_2(j)}) &\ll_k (\log\log z)^{\ell - \ell'}(\log\log z)^{s + \ell'} \\
&= (\log\log z)^{s + \ell} \le \begin{cases}
(\log\log x)^{k/2 - 1 + \ell}, &\text{if $k$ is even}, \\
(\log\log x)^{(k-1)/2 + \ell}, &\text{if $k$ is odd}.
\end{cases}
\end{align*}
The lemma follows upon summing over $\tau$, the $\gamma_i$ and $s$, which results in a constant that depends only on $k$.
\end{proof}

We are now ready to establish the main result of this section, which will be used repeatedly in Section~\ref{proof of moment asymptotics}. Recall that the functions $f_p$ and $F_g$ were defined in Definition~\ref{fF def}.

\begin{prop}  \label{summing products of k Fs}
Let $k$ be a positive even integer, and let $0\le\ell\le k$ be an integer. Suppose that $g_1=\cdots=g_\ell=\omega_0$, while the remaining functions $g_j$ ($\ell<j\le k)$ equal $\omega_{q_j}$ for some prime powers $q_j$. When $k$ is even,
\[
\sum_{n \leq x} \prod_{j=1}^k F_{g_j}(n) = \frac x{(k/2)!} \sum_{\tau\in T_k} \prod_{j=1}^{k/2} \cov(g_{\Upsilon_1(j)},g_{\Upsilon_2(j)}) + O_k\big( x(\log\log x)^{(2\ell + k)/2 - 1})\big),
\]
while when $k$ is odd,
\[
\sum_{n \leq x} \prod_{j=1}^k F_{g_j}(n) \ll_k x (\log\log x)^{(2\ell + k - 1)/2}.
\]
\end{prop}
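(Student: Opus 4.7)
The plan is to expand the product $\prod_{j=1}^k F_{g_j}(n)$ as a $k$-fold sum over primes, interchange the order of summation, and then invoke the sieve lemmas established earlier in this section. By the complete multiplicativity of $f_r$ in the subscript (Definition~\ref{fF def}),
\[
\sum_{n \le x} \prod_{j=1}^k F_{g_j}(n) = \sum_{p_1, \ldots, p_k \le x} \prod_{j=1}^k g_j(p_j) \sum_{n \le x} f_{p_1 \cdots p_k}(n).
\]

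Next I would evaluate the inner sum for fixed $r = p_1 \cdots p_k$. Writing $\theta_p(n) = \mathbf{1}[p \mid n]$ and using the identity $f_p(n)^\alpha = \theta_p(n)(1-1/p)^\alpha + (1 - \theta_p(n))(-1/p)^\alpha$ factor by factor, followed by $\sum_{n \le x} \prod_{p \in S} \theta_p(n) = x/\prod_{p \in S} p + O(1)$, one arrives at
\[
\sum_{n \le x} f_r(n) = x H(r) - \sum_{S} A_S \Big\{x\Big/\prod_{p \in S} p\Big\},
\]
where $H$ is the multiplicative function from Definition~\ref{H def}, $S$ ranges over subsets of the primes dividing $r$, and $A_S$ is an explicit product of factors of the form $(1-1/p)^{\alpha_p} - (-1/p)^{\alpha_p}$ (for $p\in S$) and $(-1/p)^{\alpha_p}$ (for $p\notin S$). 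Since $H(p) = 0$, the main term vanishes unless $r$ is squarefull, i.e.\ every prime appearing among $p_1, \ldots, p_k$ does so with multiplicity at least two; in particular, the number of distinct primes is at most $k/2$.

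Substituting back, the main contribution is $x$ times a sum over squarefull tuples, weighted by $\prod g_j(p_j) H(p_1 \cdots p_k)$. I would first truncate to $p_j \le z = x^{1/2k}$: any tuple with some $p_i > z$ forces another $p_j = p_i$ by squarefullness, and the resulting factor $H(p_i^{\alpha}) \le H(p_i^2) \ll 1/p_i^2 < 1/z^2$ provides enough decay to absorb this tail. After truncation, split according to whether the number $s$ of distinct primes equals $k/2$ or is strictly smaller. Lemma~\ref{paired off lemma} handles the case $s = k/2$ (only possible for even $k$) and produces the stated main term $\frac{x}{(k/2)!}\sum_{\tau} \prod \cov$ together with admissible error. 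Lemma~\ref{unpaired off lemma} handles $s < k/2$ and contributes only an error of size $O_k((\log\log x)^{(2\ell+k)/2 - 1})$ for even $k$ or $O_k((\log\log x)^{(2\ell+k-1)/2})$ for odd $k$; since for odd $k$ the squarefullness constraint forces $s \le (k-1)/2$, only this case arises.

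The main obstacle is bounding the error contribution $\sum_{p_1, \ldots, p_k \le x} \prod g_j(p_j) E(p_1 \cdots p_k)$ where $E(r)$ is the floor-function term above. The pointwise bound $|E(r)| = O_k(1)$ is useless here: summed against $\prod|g_j(p_j)| \ll (\log x)^k$ over $\pi(x)^k$ tuples, it would yield an error of order $x^k$, vastly exceeding the main term. Instead I would exploit the structural bound $|A_S| \le \prod_{p \in S}(1+1/p) \prod_{p \notin S}(1/p)^{\alpha_p}$, which provides genuine $1/p^{\alpha_p}$ savings for every prime outside $S$. For each fixed combinatorial shape (specification of the distinct primes $q_1,\ldots,q_s$, their multiplicities $\alpha_1,\ldots,\alpha_s$, the set $S$, and the map recording which $p_j$ equals which $q_i$), the outer sum factorizes across the prime variables and is controlled by Lemma~\ref{mertenspartial}, the covariance estimates of Lemmas~\ref{little cov lemma}--\ref{big cov lemma}, and the Erd\H{o}s--Nicolas-type savings afforded by Lemma~\ref{one omega0 is big lemma}. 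Careful bookkeeping over the finitely many shapes (the total count depends only on $k$) then confirms that the total error stays within the declared bounds for both parities of $k$.
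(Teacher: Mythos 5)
Your overall strategy matches the paper's: expand $\prod_j F_{g_j}(n)$ as a multi-sum over primes, invoke the Granville--Soundararajan formula $\sum_{n\le x} f_r(n) = H(r)x + O(2^{\omega(r)})$, observe that $H$ kills non-squarefull tuples so the number of distinct primes is at most $k/2$, and split according to whether that count equals $k/2$ (Lemma~\ref{paired off lemma}) or is strictly smaller (Lemma~\ref{unpaired off lemma}). But there is a decisive divergence in the setup that causes a genuine gap. You expand $F_{g_j}(n)=\sum_{p\le x} g_j(p)f_p(n)$, taking Definition~\ref{fF def} literally; the paper's proof in fact expands with primes restricted to $p\le z=x^{1/2k}$ (consistent with the covariance definition, $\cov(g_1,g_2)=\sum_{p\le z}\cdots$, which appears in the statement you are trying to prove). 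With the cutoff at $z$, the entire floor-function error is dispatched in one line: $\sum_{p_1,\dots,p_k\le z}\prod_j|g_j(p_j)|\,2^{\omega(p_1\cdots p_k)} \ll (\pi(z)\log z)^k\ll z^k=\sqrt{x}$. Your error-control problem, in contrast, is not resolvable by the route you sketch.

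Concretely, the structural bound $|A_S|\le\prod_{p\in S}(1+1/p)\prod_{p\notin S}(1/p)^{\alpha_p}$ gives you nothing for the primes inside $S$: there the factor $(1-1/p)^{\alpha_p}-(-1/p)^{\alpha_p}$ is $\Theta(1)$, and the fractional part $\{x/\prod_{p\in S}p\}$ supplies no systematic $1/p$ decay either. Every tool you cite for controlling the resulting prime sums (Lemma~\ref{mertenspartial}, Lemmas~\ref{little cov lemma}--\ref{big cov lemma}, Lemma~\ref{one omega0 is big lemma}) requires a $1/p$ weight, which the $S$-primes do not carry; even the single shape $s=k$, $\alpha_i\equiv1$, $S=\{1,\dots,k\}$ contributes on the order of $\prod_j\sum_{p\le x}|g_j(p)|\gg (x/\log x)^k$, which vastly exceeds the target bound. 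Separately, the inequality $H(p^\alpha)\le H(p^2)\ll 1/p^2$ is wrong: the paper itself records $H(p^2)=\frac1p(1-\frac1p)\asymp 1/p$. Your truncation of the main term to $p_j\le z$ therefore does not follow from a $1/p^2$ saving; it can be salvaged, but only by using the \emph{averaged} size of $\omega_0(p)=\omega(p-1)$ on $(z,x]$ rather than a pointwise bound, which your sketch does not do. The cleanest repair is to adopt the paper's implicit convention and carry out the expansion with $p\le z$ from the start, after which the argument collapses to the short proof in the paper.
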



\begin{proof}
All implicit constants in this proof may depend upon~$k$.
Expanding out the left-hand side using Definition~\ref{fF def} results in
\begin{align*}
\sum_{n \leq x} \prod_{j=1}^k F_{g_j}(n) &= \sum_{n \leq x} \prod_{j=1}^k \sum_{p\leq z} g_j(p)f_p(n) \\
&= \sum_{p_1,\dots,p_k \leq z} g_1(p_1) \cdots g_k(p_k) \sum_{n \leq x} f_{p_1\cdots p_k}(n) \\
&= \sum_{p_1,\dots,p_k \leq z} g_1(p_1) \cdots g_k(p_k) \big( H(p_1\cdots p_k)x + O\big( 2^{\omega(p_1\cdots p_k)} \big) \big),
\end{align*}
where the last equality follows from~\cite[equation before equation~(9)]{gs07} with a slight change of notation.
Each $\omega_q$ is bounded by $1$, while $\omega_0(p) = \omega(p-1)$ is trivially bounded by $\log p/\log 2$; in particular, $2g_j(p_j) \ll \log z$ for all $p_j \le z$. Therefore
\begin{align*}
\sum_{p_1,\dots,p_k \leq z} g_1(p_1) \cdots g_k(p_k) 2^{\omega(p_1\cdots p_k)} \ll \sum_{p_1,\dots,p_k \leq z} \prod_{j=1}^k \log z = (\pi(z)\log z)^k \ll z^k = \sqrt x,
\end{align*}
and so
\begin{align*}
\sum_{n \leq x} \prod_{j=1}^k F_{g_j}(n) = x \sum_{p_1,\dots,p_k \leq z} H(p_1\cdots p_k) g_1(p_1) \cdots g_k(p_k) + O(\sqrt x).
\end{align*}
Since $H(p_1\cdots p_k)$ vanishes unless $p_1\cdots p_k$ is squarefull by Definition~\ref{H def}, there are at most $k/2$ distinct primes among $p_1,\dots,p_k$, and so we can write
\begin{multline*}
\sum_{p_1,\dots,p_k \leq z} H(p_1\cdots p_k) g_1(p_1) \cdots g_k(p_k) = \sum_{\substack{p_1,\dots,p_k \leq z \\ p_1\cdots p_k \text{ squarefull} \\ \#\{p_1,\dots,p_k\} = k/2}} H(p_1\cdots p_k) g_1(p_1) \cdots g_k(p_k) \\
+ \sum_{\substack{p_1,\dots,p_k \leq z \\ p_1\cdots p_k \text{ squarefull} \\ \#\{p_1,\dots,p_k\} < k/2}} H(p_1\cdots p_k) g_1(p_1) \cdots g_k(p_k).
\end{multline*}
The proposition now follows upon appealing to Lemmas~\ref{paired off lemma} and~\ref{unpaired off lemma}.
\end{proof}

\section{Calculating the moments}\label{proof of moment asymptotics}

We are now ready to carry out, for $h \geq 1$, the computation of the moments $M_h(x)$. In particular, the proof of Proposition~\ref{moment asymptotics} requires some preparatory work, which we organize into Lemmas~\ref{matching main terms lemma}--\ref{lower order terms}.
We also find an asymptotic formula for the function $D(x)$ in Proposition~\ref{constant in mean}; together with Lemmas~\ref{infinitesums} and~\ref{OOM of sum of partial derivs}, this calculation reveals the origins of the perhaps mysterious constants $A$, $B$, and $C$ appearing in Theorem~\ref{main theorem}. Finally, we proof Proposition~\ref{moment asymptotics} at the end of this section.

Recall that $X = (\log\log x)^{1/2}(\log\log\log x)^2$, a notation that will persist throughout this section; we shall always assume that $X\ge2$.
As our starting point, we define
\begin{equation} \label{S1 S2 def}
S_1 = \sum_{2\le q \leq X} \Lambda(q) F_{\omega_{q}}(n)^2 \quad\text{and}\quad S_2 = \sum_{2\le q \leq X} 2 \Lambda(q) \mu(\omega_{q}) F_{\omega_{q}}(n)
\end{equation}
and use equations~\eqref{Mh def} and~\eqref{hth moment} to write
\begin{equation} \label{two way}
\begin{split}
M_h(x) = \sum_{n \leq x} \big( P_n(x) - D(x) \big)^h &= \sum_{n \leq x} \big( \log 2 \cdot F_{\omega_0}(n) + S_1 + S_2 \big)^h \\
&= \sum_{\substack{h_0,h_1,h_2\ge0 \\ h_0+h_1+h_2=h}} \binom h{h_0,h_1,h_2} \sum_{n \leq x} \big( \log 2 \cdot F_{\omega_0}(n) \big)^{h_0} S_1^{h_1}S_2^{h_2},
\end{split}
\end{equation}
where the $\binom h{h_0,h_1,h_2}$ are multinomial coefficients.
Since $\mu(\omega_q)$ is large and positive while $F_{\omega_q}$ is an oscillatory function, and $F_{\omega_0}$ is significantly larger on average than any $F_{\omega_q}$ with $q\ge2$, our intuition should be that the largest summands on the right-hand side correspond to $h_1=0$. Indeed, the following lemma gives an alternate expression for the sum of these large summands, in a notation that will allow us to apply our work from Section~\ref{polynomial arithmetic}.
Recall that, for convenience, we set $q_0 = 0$ (so that $\omega_{q_0} = \omega_0$).

\begin{lemma}\label{matching main terms lemma}
Let $h$ be a positive integer. In the notation of Definition~\ref{Rh def} and equation~\eqref{S1 S2 def},
\begin{align*}
\sum_{\substack{h_0,h_1,h_2\ge0 \\ h_0+h_1+h_2=h \\ h_1=0}} \binom h{h_0,h_1,h_2} \big( \log 2 \cdot F_{\omega_0}(n) \big)^{h_0} S_2^{h_2}  &= \sum_{h_0=0}^h \binom h{h_0} \sum_{n \leq x} \big( \log 2 \cdot F_{\omega_0}(n) \big)^{h_0} S_2^{h-h_0} \\
&=
\sum_{\substack{\beta\le B_h \\ k_{h\beta}=h}} r_{h\beta} \prod_{j=1}^{\tilde k_{h\beta}} \mu(\omega_{q_{w(h,\beta, j)}}) \sum_{n \leq x} \prod_{i=1}^h F_{\omega_{q_{v(h,\beta, i)}}}(n).
\end{align*} 
\end{lemma}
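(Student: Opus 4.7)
The first equality is a triviality: when $h_1 = 0$ we have $S_1^{h_1} = 1$ and $\binom{h}{h_0,0,h_2} = \binom{h}{h_0}$, so the three-index sum collapses to a single sum over $h_0$ with $h_2 = h - h_0$. By the binomial theorem this common value equals
$$\sum_{n \le x} \big( \log 2 \cdot F_{\omega_0}(n) + S_2 \big)^h.$$

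The substance of the lemma lies in the second equality, and my plan is to recognize $\log 2 \cdot F_{\omega_0}(n) + S_2$ as the specialization of the linear-in-$x$ part of $Q(x+y) - Q(y)$ at $x_i = F_{\omega_{q_i}}(n)$ and $y_i = \mu(\omega_{q_i})$ (using the convention $q_0 = 0$, so that $\omega_{q_0} = \omega_0$). Indeed, the partial derivatives of $Q$ are $Q_0(y) = \log 2$ and $Q_i(y) = \tfrac12 \Lambda(q_i) y_i$ for $i \ge 1$, so equation \eqref{linearpart} gives $\sum_{i=0}^{\rho(X)} x_i Q_i(y) = \log 2 \cdot x_0 + \tfrac12 \sum_{i\ge 1} \Lambda(q_i) y_i x_i$, which under the substitution becomes exactly $\log 2 \cdot F_{\omega_0}(n) + S_2$ in the notation of \eqref{S1 S2 def}.

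Lemma \ref{Rh small part lemma} then provides the expansion
$$\bigg( \sum_{i=0}^{\rho(X)} x_i Q_i(y) \bigg)^h = \sum_{\substack{\beta \le B_h \\ k_{h\beta}=h}} r_{h\beta} \prod_{i=1}^h x_{v(h,\beta,i)} \prod_{j=1}^{\tilde k_{h\beta}} y_{w(h,\beta,j)}.$$
Substituting the specialized values of $x_i$ and $y_i$, summing over $n \le x$, and pulling the $n$-independent products $\prod_j \mu(\omega_{q_{w(h,\beta,j)}})$ outside the inner sum yields the right-hand side of the lemma.

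The main obstacle---really the only step that requires attention---is the algebraic identification in the second paragraph, namely a one-time verification that the coefficients in Definition \ref{Rh def} and in the sums $S_1, S_2$ of \eqref{S1 S2 def} match up through \eqref{linearpart} so that the linear-in-$x$ part of $Q(x+y)-Q(y)$ really does specialize to $\log 2 \cdot F_{\omega_0}(n) + S_2$. Once this identification is in hand, everything else is a formal consequence of the binomial theorem and Lemma \ref{Rh small part lemma}; no analytic content (bounds, estimates, sieve arguments) enters into the proof.
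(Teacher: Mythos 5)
Your proof is correct and takes essentially the same route as the paper. The paper first writes out the full expansion of $M_h(x)$ in terms of $R_h$-monomials (equation~\eqref{grislyexpansion}) and then observes that terms on both sides can be matched according to the number of $F$-factors, extracting the lemma as the $m=h$ case; you instead apply Lemma~\ref{Rh small part lemma} directly to $\big(\sum_i x_i Q_i(y)\big)^h$ and specialize, which is a slightly tighter way of organizing the identical algebraic content. Your verification of the partial derivatives $Q_0=\log 2$, $Q_i = \tfrac12\Lambda(q_i)x_i$ for $i\ge 1$ and the resulting identification of the linear-in-$F$ part of $P_n(x)-D(x)$ with $\sum_i F_{\omega_{q_i}}(n)\,Q_i(\mu)$ is exactly the ``one-time verification'' that the paper performs in its own proof before invoking Definition~\ref{Rh def}.
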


\begin{proof}
The first equality is a simple change of variables, so we focus on the second equality.
Since $\omega_q(n) = \mu(\omega_q) + F_{\omega_q}(n)$ by equation~\eqref{function equals mean plus F}, the formulas~\eqref{Pn D new notation} can be combined as
\begin{align*}
P_n(x) - D(x) &= \log 2 \cdot \big( \omega_0(n) - \mu(\omega_q) \big) + \frac14 \sum_{2\le q\le X} \big( \omega_{q}(n)^2 - \mu(\omega_q)^2 \big) \Lambda(q) \\
&= \log 2 \cdot F_{\omega_0}(n) + \frac14 \sum_{2\le q\le X} \big( 2\mu(\omega_q)F_{\omega_q}(n) + F_{\omega_q}(n)^2 \big) \Lambda(q) \\
&= \log 2 \cdot F_{\omega_{q_0}}(n) + \frac{1}{2} \sum_{i=1}^{\rho(X)} \Lambda({q_i}) F_{\omega_{q_i}}(n) \mu(\omega_{q_i}) + \sum_{i=1}^{\rho(X)} \Lambda({q_i}) F_{\omega_{q_i}}(n)^2 \\
&= Q\big( F_{\omega_{q_0}}(n)+\mu(\omega_{q_0}),\dots,F_{\omega_{q_{\rho(X)}}}(n)+\mu(\omega_{q_{\rho(X)}}) \big) - Q\big( \mu(\omega_{q_0}),\dots,\mu(\omega_{q_{\rho(X)}}) \big)
\end{align*}
by comparison to equation~\eqref{linearpart}.
Therefore, by equation~\eqref{Mh def} and Definition~\ref{Rh def},
\begin{align}
M_h(x) = \sum_{n \leq x} \big( P_n(x) - D(x) \big)^h &= \sum_{n \leq x} R_h(F_{\omega_0}(n), \ldots, F_{\omega_{q_\ell}}(n), \mu(\omega_0), \ldots, \mu(\omega_{q_\ell})) \notag \\
 &= \sum_{n \leq x} \sum_{\beta=1}^{B_h} r_{h\beta} \prod_{i=1}^{k_{h\beta}} F_{\omega_{q_{v(h,\beta, i)}}}(n) \prod_{j=1}^{\tilde k_{h\beta}} \mu(\omega_{q_{w(h,\beta, j)}}) \notag \\
 &= \sum_{\beta=1}^{B_h} r_{h\beta} \prod_{j=1}^{\tilde k_{h\beta}} \mu(\omega_{q_{w(h,\beta, j)}}) \sum_{n \leq x} \prod_{i=1}^{k_{h\beta}} F_{\omega_{q_{v(h,\beta, i)}}}(n). \label{grislyexpansion}
\end{align}
Note that each monomial on the right-hand side has $k_{h\beta}$ factors of the form $F_{\omega_q}$ for various $0\le q\le X$.

On the other hand, if we insert the definitions~\eqref{S1 S2 def} into the right-hand side of equation~\eqref{two way} and expand out the powers $S_1^{h_1}S_2^{h_2}$, each resulting monomial will have $h_0+2h_1+h_2$ factors of the form $F_{\omega_q}$. Therefore, for any integer $h\le m\le 2h$,
\[
\sum_{\substack{h_0,h_1,h_2\ge0 \\ h_0+h_1+h_2=h \\ h_0+2h_1+h_2=m}} \binom h{h_0,h_1,h_2} \sum_{n \leq x} \big( \log 2 \cdot F_{\omega_0}(n) \big)^{h_0} S_1^{h_1}S_2^{h_2}
= 
\sum_{\substack{\beta\le B_h \\ k_{h\beta}=m}} r_{h\beta} \prod_{j=1}^{\tilde k_{h\beta}} \mu(\omega_{q_{w(h,\beta, j)}}) \sum_{n \leq x} \prod_{i=1}^{k_{h\beta}} F_{\omega_{q_{v(h,\beta, i)}}}(n).
\]
In particular, $h_0+2h_1+h_2=m$ in these sums precisely when $h_1=0$, so the $m=h$ case of the above equation is equivalent to the statement of the lemma.
\end{proof}

The following preliminary lemma estimates a sum that appears more than once in the proof of Lemma \ref{lower order terms} below. For the remainder of this section, all implicit constants may depend upon $h$, $h_0$, $h_1$, and $h_2$.

\begin{lemma}\label{sumlambdamu}
For any nonnegative integers $h_1$ and $h_2$,
\begin{align*}
\sum_{2\le q_1, \ldots, q_{h_1 + h_2} \leq X} \prod_{i = 1}^{h_1 + h_2} \Lambda(q_i) \prod_{i = h_1 + 1}^{h_1 + h_2} \mu(\omega_{q_i}) \ll (\log\log x)^{(h_1 + 2h_2)/2}(\log\log\log x)^{2h_1 + h_2}.
\end{align*}
\end{lemma}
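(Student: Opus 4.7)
The key observation is that the sum factors completely: since the summand depends on the $q_i$ separately, we have
\[
\sum_{2\le q_1, \ldots, q_{h_1 + h_2} \leq X} \prod_{i = 1}^{h_1 + h_2} \Lambda(q_i) \prod_{i = h_1 + 1}^{h_1 + h_2} \mu(\omega_{q_i}) = \bigg(\sum_{2\le q\le X} \Lambda(q)\bigg)^{\!\!h_1} \bigg(\sum_{2\le q\le X} \Lambda(q)\mu(\omega_q)\bigg)^{\!\!h_2}.
\]
So it suffices to bound each of the two factors and raise them to the appropriate power.

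For the first factor, Chebyshev's bound (or the prime number theorem) gives $\sum_{2\le q\le X} \Lambda(q) \ll X = (\log\log x)^{1/2}(\log\log\log x)^{2}$, so the first factor contributes $(\log\log x)^{h_1/2}(\log\log\log x)^{2h_1}$, which accounts for exactly the $(\log\log\log x)^{2h_1}$ piece of the target bound.

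For the second factor, applying Lemma~\ref{mertenspartial} to $\mu(\omega_q) = \sum_{p\le x} \omega_q(p)/p = \sum_{p\le x,\, p\equiv 1\,(\mathrm{mod}\,q)} 1/p$ yields
\[
\mu(\omega_q) = \frac{\log\log x}{\phi(q)} + O\!\bigg(\frac{\log q}{\phi(q)}\bigg),
\]
valid uniformly for $2 \le q \le X$ (since $X \le \log x$ for large $x$). Then using $\phi(q) \gg q$ for prime powers and partial summation,
\[
\sum_{2\le q\le X} \frac{\Lambda(q)}{\phi(q)} \ll \sum_{2\le q\le X} \frac{\Lambda(q)}{q} \ll \log X \ll \log\log\log x,
\]
and similarly $\sum_{2\le q\le X} \Lambda(q)\log q/\phi(q) \ll (\log X)^2 \ll (\log\log\log x)^2$. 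Combining these,
\[
\sum_{2\le q\le X} \Lambda(q)\mu(\omega_q) \ll \log\log x \cdot \log\log\log x + (\log\log\log x)^2 \ll \log\log x \cdot \log\log\log x.
\]
Raising to the $h_2$ power contributes $(\log\log x)^{h_2}(\log\log\log x)^{h_2}$, which together with the first factor gives precisely $(\log\log x)^{h_1/2 + h_2}(\log\log\log x)^{2h_1 + h_2}$; noting that $h_1/2 + h_2 = (h_1 + 2h_2)/2$ completes the bound.

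There is no real obstacle here --- the proof is entirely a routine application of the prime number theorem, Mertens's theorem, and Lemma~\ref{mertenspartial}. The only minor point is verifying that the $(\log\log\log x)^{2h_1}$ factor in the target (which is larger than one might at first expect) is indeed produced by the $(\log\log\log x)^2$ in the definition of $X$ raised to the $h_1$th power via the Chebyshev bound.
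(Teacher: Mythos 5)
Your proof is correct and takes essentially the same approach as the paper: the paper likewise observes that the sum factors (it says ``we sum on each $q_i$ separately''), bounds the $h_1$ factors of $\sum_{q\le X}\Lambda(q)$ by Chebyshev, and bounds the $h_2$ factors of $\sum_{q\le X}\Lambda(q)\mu(\omega_q)$ using $\mu(\omega_q)\ll(\log\log x)/\phi(q)$ followed by partial summation. Your write-up is just slightly more explicit about displaying the factorization and tracking the secondary error term from Lemma~\ref{mertenspartial}, but the argument is the same.
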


\begin{proof}
We sum on each $q_i$ separately. For each $1 \leq i \leq h_1$, we simply have
\[
\sum_{2\le q_i \leq X} \Lambda(q_i) \ll X = (\log\log x)^{1/2}(\log\log\log x)^2
\]
by the prime number theorem, giving a total contribution of $(\log\log x)^{h_1/2}(\log\log\log x)^{2h_1}$. On the other hand, when $h_1 + 1 \leq i \leq h_1 + h_2$, equation~(\ref{muomegaq}) gives
\[
\mu(\omega_{q_i}) \ll \frac{\log\log x}{\phi(q_i)}
\]
since $q\le X<\log x$.
Therefore, for each $h_1 + 1 \leq i \leq h_1 + h_2$,
\[
\sum_{2\le q_i \leq X} \Lambda(q_i) \mu(\omega_{q_i}) \ll \log\log x \sum_{2\le q_i \leq X} \frac{\Lambda(q_i)}{\phi(q_i)} \ll \log\log x \cdot \log\log\log x
\]
by partial summation, giving a total contribution of $(\log\log x \cdot \log\log\log x)^{h_2}$. Collecting exponents yields the lemma.
\end{proof}

We now handle all the terms on the right-hand side of equation~\eqref{two way} when $h$ is odd, and the lower-order terms in the case when $h$ is even, with the following lemma. We do so by brute-force expansion of the $h$th power and using the results of Section \ref{term estimation}.

\begin{lemma}\label{lower order terms}
Let $h_0$, $h_1$, and $h_2$ be nonnegative integers, and set $h=h_0+h_1+h_2$. Suppose that either $h$ is odd, or $h$ is even and $h_1\ne0$. Then with $S_1$ and $S_2$ defined as in equation~\eqref{S1 S2 def},
\[
\sum_{n \leq x} ( \log 2 \cdot F_{\omega_0}(n))^{h_0} S_1^{h_1}S_2^{h_2} \ll x(\log\log x)^{3h/2-1/4}(\log\log\log x)^{2h}
\]
for $x\ge e^{e^3}$.
\end{lemma}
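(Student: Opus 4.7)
The strategy is to expand $S_1^{h_1}$ and $S_2^{h_2}$ via their definitions in equation~\eqref{S1 S2 def}, obtaining
\begin{equation*}
\sum_{n \le x} (\log 2 \cdot F_{\omega_0}(n))^{h_0} S_1^{h_1} S_2^{h_2} = (\log 2)^{h_0} 2^{h_2} \sum_{2 \le q_1, \dots, q_{h_1+h_2} \le X} \prod_{i=1}^{h_1+h_2} \Lambda(q_i) \prod_{i=h_1+1}^{h_1+h_2} \mu(\omega_{q_i}) \cdot T,
\end{equation*}
where $T = T(q_1, \dots, q_{h_1+h_2})$ denotes the inner sum
\begin{equation*}
T = \sum_{n \le x} F_{\omega_0}(n)^{h_0} \prod_{i=1}^{h_1} F_{\omega_{q_i}}(n)^2 \prod_{i=h_1+1}^{h_1+h_2} F_{\omega_{q_i}}(n).
\end{equation*}
Setting $k = h_0 + 2h_1 + h_2$ and $\ell = h_0$, the plan is to estimate $T$ using Proposition~\ref{summing products of k Fs} and then combine with Lemma~\ref{sumlambdamu} to bound the outer sum over the $q_i$'s. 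The parity of $k$ equals that of $h-h_1$, so the hypothesis (either $h$ is odd, or $h$ is even and $h_1 \neq 0$) guarantees that at least one of the following always holds: (A) $k$ is odd; or (B) $h_1 \ge 1$.

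In Case~(A) the odd case of Proposition~\ref{summing products of k Fs} gives $T \ll x(\log\log x)^{(2\ell + k - 1)/2}$ uniformly in the $q_i$'s, and Lemma~\ref{sumlambdamu} yields
\begin{equation*}
\sum_{n \le x} (\log 2 \cdot F_{\omega_0}(n))^{h_0} S_1^{h_1} S_2^{h_2} \ll x(\log\log x)^{(3h-1)/2}(\log\log\log x)^{2h_1 + h_2}.
\end{equation*}
Since $(3h-1)/2 \le 3h/2 - 1/4$ and $2h_1 + h_2 \le 2h$, this is within the target bound.

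In Case~(B) with $k$ even, the even case of Proposition~\ref{summing products of k Fs} writes $T$ as a sum over matchings $\tau \in T_k$ of products of covariances, plus an error term of size $O(x(\log\log x)^{(2\ell+k)/2 - 1})$. The error term, combined with Lemma~\ref{sumlambdamu}, contributes $\ll x(\log\log x)^{3h/2 - 1}(\log\log\log x)^{2h}$, safely within the target. For the main term the key point is to retain the $\phi(q_i)$-denominators present in the covariance estimates of Lemmas~\ref{little cov lemma}--\ref{big cov lemma}, rather than crudely bounding each covariance by its power of $\log\log x$ alone. Using $\phi([q,q']) \ge \phi(q)$ for prime powers $q, q'$, every pair in the matching $\tau$ involving a prime-power index $q$ contributes a factor of at least $1/\phi(q)$ in the denominator. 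Since each $q_i$ from an $S_1$-factor supplies two copies of $F_{\omega_{q_i}}$, these two copies are either matched together, producing $\cov(\omega_{q_i},\omega_{q_i}) \ll (\log\log x)/\phi(q_i)$, or matched into two distinct pairs, each contributing $1/\phi(q_i)$, for a combined factor of $1/\phi(q_i)^2$. In the former case, $\sum_{q_i \le X} \Lambda(q_i)/\phi(q_i) \ll \log\log\log x$ replaces the crude $\sum_{q_i} \Lambda(q_i) \ll X$; in the latter, $\sum_{q_i} \Lambda(q_i)/\phi(q_i)^2 \ll 1$ does so. Either way, each $S_1$-type $q_i$ yields a savings of at least $(\log\log x)^{1/2}$ over the crude estimate, so since $h_1 \ge 1$ the Case~(B) bound becomes $\ll x(\log\log x)^{3h/2 - 1/2}(\log\log\log x)^{2h}$, again within the target.

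The main obstacle will be the combinatorial bookkeeping: tracking how each matching $\tau$ distributes the $F_{\omega_{q_i}}$ factors among the pairs, and confirming that each prime-power index surviving into a covariance retains at least one $1/\phi$ denominator after absolute values are taken. Once that is set up carefully, the $(\log\log x)^{1/4}$ strengthening of the target over the crude $3h/2$ exponent follows with room to spare.
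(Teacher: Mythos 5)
Your overall route matches the paper's: expand the multinomial, bound the inner sum over $n$ via Proposition~\ref{summing products of k Fs}, sum over the $q_i$ via Lemma~\ref{sumlambdamu}, and split on the parity of $k = h_0+2h_1+h_2$. Case~(A) (odd $k$) is identical to the paper's Case~1 and is fine.

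In Case~(B), however, there is a real gap in the bookkeeping that you gesture at but do not close. You assert that when the two copies of $F_{\omega_{q_i}}$ land in two distinct pairs, ``each [covariance] contributing $1/\phi(q_i)$, for a combined factor of $1/\phi(q_i)^2$,'' and you conclude that \emph{every} $S_1$-index $q_i$ ``retains at least one $1/\phi$ denominator.'' But a single covariance $\cov(\omega_{q_i},\omega_{q_j})$ with both $i,j \le h_1$ supplies, under Lemma~\ref{little cov lemma}, only \emph{one} factor $1/\phi([q_i,q_j])$, and since $[q_i,q_j]$ can equal $q_i$ (when $q_j \mid q_i$), that factor is not $1/(\phi(q_i)\phi(q_j))$: it cannot be credited as a full $1/\phi$ saving to both $i$ and $j$ simultaneously. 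The product over all $k/2$ pairs does not factor as $\prod_i 1/\phi(q_i)^2$, and your assertion ``$\sum_{q_i}\Lambda(q_i)/\phi(q_i)^2 \ll 1$ does so'' is being applied in situations where only a weaker factor is available. The paper sidesteps exactly this conflict: it uses the one-sided bound $\cov(\omega_q,\omega_{q'}) \ll (\log\log x)/\phi(q)$ and, for each matching $\tau$, selects a subset $\Delta(\tau)\subseteq\{1,\dots,h_1\}$ of size at least $h_1/2$ whose indices can be charged a $1/\phi(q_i)$ factor without any double-counting; this gives a saving of $(\log\log x)^{h_1/4}\ge (\log\log x)^{1/4}$, which is why the final exponent is $3h/2-1/4$ rather than your claimed $3h/2-1/2$.

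Your stronger claim — that all $h_1$ indices can retain a $1/\phi(q_i)$ — is in fact salvageable: since $\phi([q,q']) \ge \max(\phi(q),\phi(q')) \ge \sqrt{\phi(q)\phi(q')}$, each covariance supplies a factor $\le 1/\sqrt{\phi(q_i)\phi(q_j)}$, and because each $q_i$ with $i\le h_1$ occupies exactly two of the $k$ slots, the full product over $\tau$ acquires $\prod_{i=1}^{h_1} 1/\phi(q_i)$. But this ``half-credit'' argument is the missing step; as written, ``confirming that each prime-power index surviving into a covariance retains at least one $1/\phi$ denominator'' is acknowledged rather than proved, and the naive attribution you sketch before that sentence is false in the shared-covariance case. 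You should either supply the $\sqrt{\phi\phi'}$ argument explicitly, or fall back to the paper's $\Delta(\tau)\ge h_1/2$ selection.
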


\begin{proof}
Since
\begin{align*}
S_1^{h_1} &= \bigg( \sum_{2\le q \leq X} \Lambda(q) F_{\omega_{q}}(n)^2 \bigg)^{h_1} = \sum_{2\le q_1, \ldots, q_{h_1} \leq X} \prod_{i = 1}^{h_1} \Lambda(q_i) F_{\omega_{q_i}}(n)^2
\end{align*}
and
\begin{align*}
S_2^{h_2} &= \bigg( \sum_{2\le q \leq X} 2 \Lambda(q)  \mu(\omega_{q}) F_{\omega_{q}}(n) \bigg)^{h_2} \ll \sum_{2\le q_1, \ldots, q_{h_2} \leq X} \prod_{i = 1}^{h_2}  \Lambda(q_i)  \mu(\omega_{q_i})F_{\omega_{q_i}}(n),
\end{align*}
the sum under consideration satisfies
\begin{align} \label{nastysum}
\sum_{n \leq x} & ( \log 2 \cdot F_{\omega_0}(n))^{h_0} S_1^{h_1}S_2^{h_2} \\
&\ll \sum_{n \leq x} \sum_{2\le q_1, \ldots, q_{h_1 + h_2} \leq X} F_{\omega_0}(n)^{h_0} \prod_{i = 1}^{h_1 + h_2} \Lambda(q_i) \prod_{i = h_1 + 1}^{h_1 + h_2} \mu(\omega_{q_i}) \prod_{i = 1}^{h_1} F_{\omega_{q_i}}(n)^2 \prod_{i = h_1 + 1}^{h_1 + h_2} F_{\omega_{q_i}}(n) \notag \\
&= \sum_{2\le q_1, \ldots, q_{h_1 + h_2} \leq X} \prod_{i = 1}^{h_1 + h_2} \Lambda(q_i) \prod_{i = h_1 + 1}^{h_1 + h_2} \mu(\omega_{q_i}) \bigg( \sum_{n \leq x} F_{\omega_0}(n)^{h_0} \prod_{i = 1}^{h_1} F_{\omega_{q_i}}(n)^2 \prod_{i = h_1 + 1}^{h_1 + h_2} F_{\omega_{q_i}}(n) \bigg).\notag
\end{align}

We will consider two cases, depending on the parity of $h_0+h_2$; when $h_0+h_2$ is even, we additionally assume that $h_1\ne0$. A moment's thought verifies that these two cases do exhaust the possibilities for $h_0$, $h_1$, and~$h_2$.

\medskip\noindent \textbf{Case 1: $h_0 + h_2$ is odd.}
In the inner sum on the right-hand side of equation~\eqref{nastysum},
each summand is the product of $h_0 + 2h_1 + h_2$ values of $F$-functions. By Proposition~\ref{summing products of k Fs} with $\ell=h_0$ and $k=h_0 + 2h_1 + h_2$ (which is odd),
\begin{align*}
\sum_{n \leq x} F_{\omega_0}(n)^{h_0} \prod_{i = 1}^{h_1} F_{\omega_{q_i}}(n)^2 \prod_{i = h_1 + 1}^{h_1 + h_2} F_{\omega_{q_i}}(n) \ll x (\log\log x)^{(3h_0 + 2h_1 + h_2 - 1)/2}.
\end{align*}
Inserting this upper bound into the right-hand side of equation~\eqref{nastysum} yields
\begin{align*}
\sum_{n \leq x} \big( \log 2 \cdot F_{\omega_0}(n) \big)^{h_0} & S_1^{h_1}S_2^{h_2} \\
&\ll x (\log\log x)^{(3h_0 + 2h_1 + h_2 - 1)/2} \sum_{2\le q_1, \ldots, q_{h_1 + h_2} \leq X} \prod_{i = 1}^{h_1 + h_2} \Lambda(q_i) \prod_{i = h_1 + 1}^{h_1 + h_2} \mu(\omega_{q_i}) \\
&\ll x (\log\log x)^{(3h_0 + 2h_1 + h_2 - 1)/2} \cdot (\log\log x)^{(h_1 + 2h_2)/2}(\log\log\log x)^{2h_1 + h_2} \\
&\le x (\log\log x)^{(3h - 1)/2} (\log\log\log x)^{2h}
\end{align*}
by Lemma~\ref{sumlambdamu} (since $h=h_0+h_1+h_2$), which establishes the lemma in this case.

\medskip\noindent \textbf{Case 2: $h_0+ h_2$ is even and $h_1\ne0$.}
In the inner sum on the right-hand side of equation~\eqref{nastysum},
each summand is again the product of $h_0 + 2h_1 + h_2$ values of $F$-functions. By Proposition~\ref{summing products of k Fs} with $\ell=h_0$ and $k=h_0 + 2h_1 + h_2$ (which is now even),
\begin{multline*}
\sum_{n \leq x} F_{\omega_0}(n)^{h_0} \prod_{i = 1}^{h_1} F_{\omega_{q_i}}(n)^2 \prod_{i = h_1 + 1}^{h_1 + h_2} F_{\omega_{q_i}}(n) \\
\ll x \sum_{\tau \in T_k} \prod_{j = 1}^{k/2} \cov(\omega_{q_{\Upsilon_1(j)}}, \omega_{q_{\Upsilon_2(j)}}) + x(\log\log x)^{(3h_0 + 2h_1 + h_2)/2-1}.
\end{multline*}
Inserting this upper bound into the right-hand side of equation~\eqref{nastysum} yields
\begin{align}
\sum_{n \leq x} \big( \log 2 & {}\cdot F_{\omega_0}(n) \big)^{h_0} S_1^{h_1}S_2^{h_2} \notag \\
&\ll x \sum_{2\le q_1, \ldots, q_{h_1 + h_2} \leq X} \prod_{i = 1}^{h_1 + h_2} \Lambda(q_i) \prod_{i = h_1 + 1}^{h_1 + h_2} \mu(\omega_{q_i}) \sum_{\tau \in T_k} \prod_{j = 1}^{k/2} \cov(\omega_{q_{\Upsilon_1(j)}}, \omega_{q_{\Upsilon_2(j)}}) \notag \\
&\qquad{}+ x (\log\log x)^{(3h_0 + 2h_1 + h_2)/2 - 1} \sum_{2\le q_1, \ldots, q_{h_1 + h_2} \leq X} \prod_{i = 1}^{h_1 + h_2} \Lambda(q_i) \prod_{i = h_1 + 1}^{h_1 + h_2} \mu(\omega_{q_i}) \notag \\
&\ll x \sum_{\tau \in T_k} \sum_{2\le q_1, \ldots, q_{h_1 + h_2} \leq X} \prod_{i = 1}^{h_1 + h_2} \Lambda(q_i) \prod_{i = h_1 + 1}^{h_1 + h_2} \mu(\omega_{q_i}) \prod_{j = 1}^{k/2} \cov(\omega_{q_{\Upsilon_1(j)}}, \omega_{q_{\Upsilon_2(j)}}) \notag \\
&\qquad{}+ x (\log\log x)^{3h/2 - 1} (\log\log\log x)^{2h}  \label{key cov product}
\end{align}
by Lemma~\ref{sumlambdamu} and an examination of exponents similar to the end of the proof of Case~1.

Now, by Lemmas~\ref{little cov lemma}--\ref{big cov lemma}, for any $0\le q,q'\le X$ we have
\begin{equation}  \label{cov cases}
\cov(\omega_{q}, \omega_{q'}) \ll \begin{cases}
(\log\log x)/\phi(q), & \text{if } q,q'\ge 2, \\
(\log\log x)^2/\phi(q), & \text{if } q\ge2 \text{ and } q'=0, \\
(\log\log x)^2/\phi(q'), & \text{if } q'\ge2 \text{ and } q=0, \\
(\log\log x)^3, & \text{if } q=q'=0.
\end{cases}
\end{equation}
Notice that the first upper bound is, intentionally, crude in general: by Lemma~\ref{little cov lemma}, we could divide not just by $\phi(q)$ but by $\phi([q,q'])$. However, $\phi([q,q'])$ can be as small as $\phi(q)$ in the worst case (when $q'$ divides $q$). Fortunately, our argument will succeed even with this worst-case assumption. (We have also bounded $\log\log z$ above by $\log\log x$, which is fairly insignificant.)

For a given $\tau\in T_k$ (which is a two-to-one function), define $\Delta(\tau)$ to be a subset of $\{1,\dots,h_1\}$ of size at least $h_1/2$ such that $\tau$ is one-to-one when restricted to~$\Delta(\tau)$. When we use the upper bounds~\eqref{cov cases} in the innermost product on the right-hand side of equation~\eqref{key cov product}, the resulting estimate will include a factor of $\prod_{i\in\Delta(\tau)} 1/\phi(q_i)$.
Furthermore, the resulting exponent of $\log\log x$ is $k/2+h_0 = (3h_0+2h_1+h_2)/2$, regardless of how the $g_j$ are paired with one another by~$\tau$. In other words, equation~\eqref{key cov product} becomes
\begin{align}  \label{Lee's trick!}
\sum_{n \leq x} & \big( \log 2 {}\cdot F_{\omega_0}(n) \big)^{h_0} S_1^{h_1}S_2^{h_2} \\
&\ll x (\log\log x)^{(3h_0+2h_1+h_2)/2} \sum_{\tau \in T_k} \sum_{2\le q_1, \ldots, q_{h_1 + h_2} \leq X} \prod_{i\in\Delta(\tau)} \frac{\Lambda(q_i)}{\phi(q_i)} \prod_{\substack{1\le i\le h_1 + h_2 \\ i\notin\Delta(\tau)}} \Lambda(q_i) \prod_{i = h_1 + 1}^{h_1 + h_2} \mu(\omega_{q_i}) \notag \\
&\qquad{}+ x (\log\log x)^{3h/2 - 1} (\log\log\log x)^{2h}. \notag
\end{align}
We now sum on each $q_i$ separately (still fixing $\tau$ for the moment), in a similar manner to the proof of Lemma~\ref{sumlambdamu}. For each $1 \leq i \leq h_1$ such that $i\notin\Delta(\tau)$, the prime number theorem gives
\[
\sum_{2\le q_i \leq X} \Lambda(q_i) \ll X = (\log\log x)^{1/2}(\log\log\log x)^2
\]
resulting in a total contribution of $(\log\log x)^{(h_1-\#\Delta(\tau))/2}(\log\log\log x)^{2(h_1-\#\Delta(\tau))}$. On the other hand, for each $1 \leq i \leq h_1$ such that $i\in\Delta(\tau)$, partial summation gives
\[
\sum_{2\le q_i \leq X} \frac{\Lambda(q_i)}{\phi(q_i)} \ll \log X \ll \log\log\log x,
\]
resulting in a total contribution of $(\log\log\log x)^{\#\Delta(\tau)}$.
Lastly, when $h_1 + 1 \leq i \leq h_1 + h_2$, equation~(\ref{muomegaq}) gives
\[
\sum_{2\le q_i \leq X} \Lambda(q_i) \mu(\omega_{q_i}) \ll \log\log x \sum_{2\le q_i \leq X} \frac{\Lambda(q_i)}{\phi(q_i)} \ll \log\log x \cdot \log\log\log x
\]
as above, resulting in a total contribution of $(\log\log x \cdot \log\log\log x)^{h_2}$. The product of all these contributions is
\begin{align*}
(\log\log x)^{(h_1-\#\Delta(\tau))/2+h_2} (\log\log\log x)^{2h_1+h_2-\#\Delta(\tau)} &\le (\log\log x)^{h_1/4+h_2} (\log\log\log x)^{2h} \\
&\le (\log\log x)^{h_1/2+h_2-1/4} (\log\log\log x)^{2h},
\end{align*}
where we have used the assumption $h_1>0$ in the last inequality.

Finally we insert this estimate back into equation~\eqref{Lee's trick!}, which gives
\begin{align*}
\sum_{n \leq x} \big( \log 2 {}\cdot F_{\omega_0}(n) & \big)^{h_0} S_1^{h_1}S_2^{h_2} \\
&\ll x (\log\log x)^{(3h_0+2h_1+h_2)/2} \sum_{\tau \in T_k} (\log\log x)^{h_1/2+h_2-1/4} (\log\log\log x)^{2h} \\
&\qquad{}+ x (\log\log x)^{3h/2 - 1} (\log\log\log x)^{2h} \\
&\ll x (\log\log x)^{3h/2-1/4} (\log\log\log x)^{2h}
\end{align*}
(the sum over $\tau$ can now be ignored, since the implicit constant may depend upon~$h$), which completes the proof of the lemma.
\end{proof}

Two particular sums of arithmetic functions will arise in the evaluation of the main term for $M_h(x)$; we asymptotically evaluate those sums in the following lemma, after which we give an asymptotic formula for the ``mean'' $D(x)$ that appears in the definition of~$M_h(x)$.

\begin{lemma}\label{infinitesums}
Recall from Theorem \ref{main theorem} that
\[
A_0 = \frac{1}{4} \sum_{p} \frac{p^2 \log p}{(p - 1)^3(p + 1)} \quad\text{and}\quad B = \frac14 \sum_p \frac{p^3(p^4 - p^3 - p^3 - p - 1)(\log p)^2}{(p - 1)^6(p + 1)^2(p^2 + p + 1)}.
\]
When $X>2$,
\begin{itemize}
\item[(a)] $\displaystyle \frac{1}{4}\sum_{2 \leq q \leq X} \frac{\Lambda(q)}{\phi(q)^2} = A_0 + O\bigg( \frac{1}{X} \bigg)$;
\item[(b)] $\displaystyle \frac{1}{4} \sum_{2 \leq q_1 \leq X}\sum_{2 \leq  q_2 \leq X} \frac{\Lambda(q_1)\Lambda(q_2)}{\phi(q_1)\phi(q_2)\phi([q_1, q_2])} = 4A_0^2 + B + O\bigg(\frac{\log X}{X}\bigg)$.
\end{itemize}
\end{lemma}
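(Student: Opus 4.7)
The plan is to treat both parts by extending the sums to all prime powers and showing the truncation errors are negligible. For part (a), interchange the order of summation so the outer variable is the prime $p$. Using $\Lambda(p^k) = \log p$ and $\phi(p^k) = p^{k-1}(p-1)$, the inner geometric series evaluates cleanly:
\begin{equation*}
\sum_{k \geq 1}\frac{1}{\phi(p^k)^2} = \frac{1}{(p-1)^2}\cdot\frac{1}{1-p^{-2}} = \frac{p^2}{(p-1)^3(p+1)}.
\end{equation*}
Summing $\log p$ times this over all primes $p$ gives exactly $4A_0$ after dividing by $4$. The tail error $\sum_{q > X}\Lambda(q)/\phi(q)^2 \ll \sum_{q > X}\Lambda(q)/q^2 \ll 1/X$ follows from partial summation and the prime number theorem, yielding the stated $O(1/X)$.

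For part (b), the key algebraic identity is
\begin{equation*}
\frac{\Lambda(q_1)\Lambda(q_2)}{\phi(q_1)\phi(q_2)\phi([q_1,q_2])} = \frac{\Lambda(q_1)\Lambda(q_2)}{\phi(q_1)^2\phi(q_2)^2} + \Lambda(q_1)\Lambda(q_2)\left(\frac{1}{\phi(q_1)\phi(q_2)\phi([q_1,q_2])} - \frac{1}{\phi(q_1)^2\phi(q_2)^2}\right).
\end{equation*}
When $q_1$ and $q_2$ are powers of \emph{different} primes, $[q_1,q_2] = q_1 q_2$ and the bracket vanishes. Summing the first term over $2 \leq q_1, q_2 \leq X$ produces $(4A_0 + O(1/X))^2$ by part (a), contributing $4A_0^2$ after division by $4$. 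The bracketed piece receives contributions only from $(q_1,q_2) = (p^{k_1}, p^{k_2})$ with a common underlying prime, for which $\phi([q_1,q_2]) = \phi(p^{\max(k_1,k_2)})$.

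For each such prime $p$, split the sum over $k_1, k_2 \ge 1$ into the three cases $k_1 < k_2$, $k_1 = k_2$, $k_1 > k_2$, and evaluate each as a geometric series in $p^{-1}$. Concretely,
\begin{equation*}
2\sum_{k_1 < k_2} \frac{1}{\phi(p^{k_1})\phi(p^{k_2})^2} = \frac{2p^3}{(p-1)^5(p+1)(p^2+p+1)}, \quad \sum_k \frac{1}{\phi(p^k)^3} = \frac{p^3}{(p-1)^4(p^2+p+1)},
\end{equation*}
and $\bigl(\sum_k \phi(p^k)^{-2}\bigr)^2 = p^4/((p-1)^6(p+1)^2)$. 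Placing these over the common denominator $(p-1)^6(p+1)^2(p^2+p+1)$ and expanding, the numerator collapses to $p^3$ times a degree-4 polynomial in $p$, matching the constant $B$ upon summation over $p$. For the truncation, one estimates the tail of the original double sum by
\begin{equation*}
\sum_{q_1 > X}\frac{\Lambda(q_1)}{\phi(q_1)^2}\sum_{q_2 \le X}\frac{\Lambda(q_2)}{\phi(q_2)} \ll \frac{1}{X}\cdot \log X,
\end{equation*}
using $\phi([q_1,q_2]) \ge \phi(q_1)$ and the Mertens-type bound $\sum_{q \le X}\Lambda(q)/\phi(q) \ll \log X$; by symmetry this accounts for the full error $O(\log X / X)$.

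The main obstacle is the algebraic verification in part (b): correctly combining the three rational expressions above over the common denominator to recognize the numerator predicted by $B$. The rest of the argument is geometric-series bookkeeping plus standard tail estimates, with no analytic input beyond Chebyshev/Mertens bounds.
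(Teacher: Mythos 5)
Your approach is essentially the same as the paper's, and the algebra checks out. For part (a) both proofs interchange summation and evaluate the same geometric series; for part (b) both rewrite the double sum as a main term over $1/(\phi(q_1)^2\phi(q_2)^2)$ plus a correction supported on prime-power pairs sharing the same underlying prime (your ``bracket''), which is exactly the paper's decomposition into the different-prime part and the same-prime diagonal. Where you split the geometric series over $k_1 < k_2$, $k_1 = k_2$, $k_1 > k_2$, the paper instead invokes the closed form $\sum_{r,s\ge1} x^{r+s+\max(r,s)} = \frac{1+x^2}{1-x^2}\cdot\frac{x^3}{1-x^3}$; these agree (combining your two pieces over the common denominator yields precisely $\frac{p^3(p^2+1)}{(p-1)^5(p+1)(p^2+p+1)}$), and both then collapse to the numerator $p^3(p^4 - p^3 - p^2 - p - 1)$ that defines $4B$ — incidentally the displayed formula for $B$ has a typo, as $-p^3 - p^3$ should read $-p^3 - p^2$.

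One small gap: your tail estimate explicitly bounds only the mixed region $q_1>X,\ q_2\le X$ (and its mirror image), but the region where both $q_1>X$ and $q_2>X$ is not covered by ``symmetry'' — and a naive bound $\phi([q_1,q_2])\ge\phi(q_1)$ there leaves a divergent factor $\sum_{q_2>X}\Lambda(q_2)/\phi(q_2)$. This is easily repaired: since $q_i\mid[q_1,q_2]$ for $i=1,2$, one has $\phi([q_1,q_2])\ge\max(\phi(q_1),\phi(q_2))\ge\sqrt{\phi(q_1)\phi(q_2)}$, giving
\[
\sum_{q_1,q_2>X}\frac{\Lambda(q_1)\Lambda(q_2)}{\phi(q_1)\phi(q_2)\phi([q_1,q_2])} \le \bigg(\sum_{q>X}\frac{\Lambda(q)}{\phi(q)^{3/2}}\bigg)^2 \ll \frac1X,
\]
which is well within the claimed $O(\log X/X)$. (The paper splits this double tail into distinct-prime and same-prime pieces and shows it is in fact $O(\log X/X^2)$.) With that one sentence added, your argument is complete and runs along the same lines as the paper's.
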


\begin{proof}
(a) We need only observe that
\begin{equation}  \label{getting 4A_0}
\sum_{q \geq 2} \frac{\Lambda(q)}{\phi(q)^2} = \sum_{p} \sum_{j = 1}^\infty \frac{\Lambda(p^j)}{\phi(p^j)^2} = \sum_{p} \frac{\log p}{(p-1)^2} \sum_{j = 1}^\infty \frac{1}{(p^{j-1})^2} = \sum_{p} \frac{\log p}{(p-1)^2} \frac{p^2}{p^2-1} = 4A_0,
\end{equation}
while partial summation bounds the tail of this convergent series by
\[
\frac{1}{4} \sum_{q > X} \frac{\Lambda(q)}{\phi(q)^2} \ll \frac{1}{X}.
\]

(b) If $q_1 = p_1^r$ and $q_2 = p_2^s$ with $p_1 \neq p_2$, then $[q_1, q_2] = p_1^r p_2^s$; on the other hand, if $q_1 = p^r$ and $q_2 = p^s$ are powers of the same prime, then $[q_1, q_2] = p^{\max(r, s)}$. Therefore,
\begin{align}\label{varianceconstantlastpart}
\sum_{q_1\ge2}\sum_{q_2\ge2} & \frac{\Lambda(q_1)\Lambda(q_2)}{\phi(q_1)\phi(q_2)\phi([q_1, q_2])}\nonumber \\
&= \sum_{p_1^r} \sum_{\substack{p_2^s \\ p_2\ne p_1}} \frac{(\log p_1)(\log p_2)}{p_1^{r-1}(p_1-1)p_2^{s-1}(p_2-1)p_1^{r-1}(p_1-1)p_2^{s-1}(p_2-1)}  \nonumber \\
&\qquad{}+ \sum_p \sum_{r=1}^\infty \sum_{s=1}^\infty \frac{(\log p)^2}{p^{r-1}(p-1)p^{s-1}(p-1)p^{\max(r, s)-1}(p-1)} \nonumber \\
&= \bigg( \sum_{p_1^r} \sum_{\substack{p_2^s}}  \frac{(\log p_1)(\log p_2)}{p_1^{r-1}(p_1-1)p_2^{s-1}(p_2-1)p_1^{r-1}(p_1-1)p_2^{s-1}(p_2-1)}   \\
&\qquad{} - \sum_p \sum_{r=1}^\infty \sum_{s=1}^\infty \frac{(\log p)(\log p)}{p^{r-1}(p-1)p^{s-1}(p-1)p^{r-1}(p-1)p^{s-1}(p-1)} \bigg) \nonumber \\
&\qquad{}+ \sum_p \sum_{r=1}^\infty \sum_{s=1}^\infty \frac{(\log p)^2}{p^{r-1}(p-1)p^{s-1}(p-1)p^{\max(r, s)-1}(p-1)}. \nonumber
\end{align}
By equation~\eqref{getting 4A_0}, the double sum on the right-hand side is simply
\[
\bigg( \sum_{p^j} \frac{\log p}{(p^{j-1})^2(p-1)^2} \bigg)^2 = (4A_0)^2.
\]
On the other hand, using the power series identities
\[
\sum_{r=1}^\infty \sum_{s=1}^\infty x^r x^s x^r x^s = \bigg( \frac{x^2}{1-x^2} \bigg)^2
\]
and
\begin{align*}
\sum_{r=1}^\infty \sum_{s=1}^\infty x^r x^s x^{\max\{r,s\}} &= 2 \sum_{r=1}^\infty \sum_{s=r}^\infty x^r x^s x^s - \sum_{r=1}^\infty x^r x^r x^r \\
&= 2 \sum_{r=1}^\infty \frac{x^{3r}}{1 - x^2} - \sum_{r=1}^\infty x^{3r} = \frac{1+x^2}{1-x^2} \frac{x^3}{1-x^3},
\end{align*}
we may evaluate the pair of triple sums on the right-hand side of equation~\eqref{varianceconstantlastpart} as
\begin{equation*}
\sum_p \bigg( {-} \frac{p^4(\log p)^2}{(p-1)^4} \bigg( \frac{p^{-2}}{1-p^{-2}} \bigg)^2 + \frac{p^3(\log p)^2} {(p-1)^3} \frac{1+p^{-2}}{1-p^{-2}} \frac{p^{-3}}{1-p^{-3}} \bigg) = 4B.
\end{equation*}
Thus equation~\eqref{varianceconstantlastpart} simplifies to
\[
\frac{1}{4} \sum_{q_1\ge2}\sum_{q_2\ge2} \frac{\Lambda(q_1)\Lambda(q_2)}{\phi(q_1)\phi(q_2)\phi([q_1, q_2])} = 4A_0^2 + B,
\]
and it therefore suffices to show that the tail
\begin{multline}
\sum_{q_1\ge2}\sum_{q_2\ge2} \frac{\Lambda(q_1)\Lambda(q_2)}{\phi(q_1)\phi(q_2)\phi([q_1, q_2])} - \sum_{2 \leq q_1 \leq X}\sum_{2 \leq q_2 \leq X} \frac{\Lambda(q_1)\Lambda(q_2)}{\phi(q_1)\phi(q_2)\phi([q_1, q_2])} \\
= \sum_{q_1>X}\sum_{q_2>X} \frac{\Lambda(q_1)\Lambda(q_2)}{\phi(q_1)\phi(q_2)\phi([q_1, q_2])} + 2\sum_{2 \leq q_1 \leq X} \sum_{q_2 > X} \frac{\Lambda(q_1)\Lambda(q_2)}{\phi(q_1)\phi(q_2)\phi([q_1, q_2])}  \label{double sum tail}
\end{multline}
is $\ll (\log X)/X$.

Since $\phi([q_1, q_2]) \geq \phi(q_2)$, the second sum on the right-hand side can be bounded crudely:
\[
2 \sum_{2 \leq q_1 \leq X} \sum_{q_2 > X} \frac{\Lambda(q_1)\Lambda(q_2)}{\phi(q_1)\phi(q_2)^2} = 2 \bigg( \sum_{2 \leq q_1 \leq X} \frac{\Lambda(q_1)}{\phi(q_1)} \bigg) \bigg( \sum_{q_2 > X} \frac{\Lambda(q_2)}{\phi(q_2)^2} \bigg) \ll {\log X} \cdot \frac1{X}.
\]
by partial summation. Finally, we handle the first sum on the right-hand side of equation~\eqref{double sum tail} by splitting it as
\begin{align*}
\sum_{q_1>X}\sum_{q_2>X} & \frac{\Lambda(q_1)\Lambda(q_2)}{\phi(q_1)\phi(q_2)\phi([q_1, q_2])} \\
&= \sum_{p_1^r>X} \sum_{\substack{p_2^s>X \\ p_2\ne p_1}} \frac{(\log p_1)(\log p_2)}{\phi(p_1^r)\phi(p_2^s)\phi(p_1^r p_2^s)} + \sum_p \sum_{\substack{r \\ p^r > X}} \sum_{\substack{s \\ p^s > X}} \frac{(\log p)^2}{\phi(p^r)\phi(p^s)\phi(p^{\max(r, s)})} \\
&\le \sum_{p_1^r>X} \sum_{\substack{p_2^s>X \\ p_2\ne p_1}} \frac{(\log p_1)(\log p_2)}{\phi(p_1^r)^2\phi(p_2^s)^2} + 2 \sum_p \sum_{\substack{r \\ p^r > X}} \sum_{\substack{s=r}}^\infty \frac{(\log p)^2}{\phi(p^r)\phi(p^s)\phi(p^s)} \\
&\ll \bigg( \sum_{p^r > X} \frac{\log p}{p^{2r}} \bigg)^2 + \sum_{p^r>X} \frac{(\log p)^2}{p^r} \sum_{s=r}^\infty \frac1{p^{2s}} \\
&\ll \bigg( \frac1X \bigg)^2 + \sum_{p^r>X} \frac{(\log p)^2}{p^{3r}} \ll \frac{\log X}{X^2}
\end{align*}
by partial summation.
\end{proof}

\begin{prop}\label{constant in mean}
Recall that $D(x)$ was defined in equation~\eqref{Pn D new notation}. When $x>e^e$,
\[
D(x) = A(\log\log x)^2 + O\bigg(\frac{(\log\log x)^{3/2}}{(\log\log\log x)^2} \bigg),
\]
where $A = \displaystyle \frac{\log 2}{2} + A_0$ as in Theorem~\ref{main theorem}.
\end{prop}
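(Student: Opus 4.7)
The plan is to split $D(x)$ into the two constituent pieces from equation~\eqref{Pn D new notation} and estimate each separately, showing that each contributes the expected portion of the constant $A = \frac{\log 2}{2} + A_0$.

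First I would estimate $\mu(\omega_0) = \sum_{p\le x}\omega(p-1)/p$. The key input is already available: the $q=2$ case of equation~\eqref{q0 claim} (combined with the trivial observation that $\omega(2-1)=0$) states that $S(t) := \sum_{p\le t}\omega(p-1) = t\log\log t/\log t + O(t/\log t)$ for all $t>2$. Applying partial summation yields
\[
\mu(\omega_0) = \frac{S(x)}{x} + \int_2^x \frac{S(t)}{t^2}\,dt = O(1) + \int_2^x \frac{\log\log t}{t\log t}\,dt + O\!\left(\int_2^x \frac{dt}{t\log t}\right) = \frac{(\log\log x)^2}{2} + O(\log\log x).
\]
In particular, $\log 2 \cdot \mu(\omega_0) = \frac{\log 2}{2}(\log\log x)^2 + O(\log\log x)$, which is well within the target error term.

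Second I would estimate the sum $\frac{1}{4}\sum_{2\le q\le X} \mu(\omega_q)^2 \Lambda(q)$. Since $X<\log x$, Lemma~\ref{mertenspartial} gives $\mu(\omega_q) = \log\log x/\phi(q) + O(\log q/\phi(q))$ uniformly for $2\le q\le X$. Squaring and summing against $\Lambda(q)$,
\[
\sum_{2\le q\le X}\mu(\omega_q)^2\Lambda(q) = (\log\log x)^2 \sum_{2\le q\le X}\frac{\Lambda(q)}{\phi(q)^2} + O\!\left(\log\log x \sum_{q\ge 2}\frac{\Lambda(q)\log q}{\phi(q)^2}\right) + O\!\left(\sum_{q\ge 2}\frac{\Lambda(q)(\log q)^2}{\phi(q)^2}\right).
\]
Both error sums converge absolutely (expanding as double series in $p$ and $j$, each is controlled by $\sum_p (\log p)^3/(p-1)^2 < \infty$), giving $O(\log\log x)$ and $O(1)$ respectively. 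Lemma~\ref{infinitesums}(a) then delivers the main term as $(\log\log x)^2(4A_0 + O(1/X))$, so
\[
\tfrac{1}{4}\sum_{2\le q\le X}\mu(\omega_q)^2\Lambda(q) = A_0(\log\log x)^2 + O\!\left(\frac{(\log\log x)^2}{X}\right) + O(\log\log x).
\]

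Finally I would combine the two estimates. Because $X=(\log\log x)^{1/2}(\log\log\log x)^2$ was chosen precisely so that $(\log\log x)^2/X = (\log\log x)^{3/2}/(\log\log\log x)^2$, and since $\log\log x$ is absorbed into this quantity for large $x$, summing gives exactly the advertised bound
\[
D(x) = \bigg(\frac{\log 2}{2} + A_0\bigg)(\log\log x)^2 + O\!\left(\frac{(\log\log x)^{3/2}}{(\log\log\log x)^2}\right).
\]
There is no serious obstacle here: the proof is an essentially mechanical book-keeping exercise built on Lemma~\ref{mertenspartial}, equation~\eqref{q0 claim}, and Lemma~\ref{infinitesums}(a). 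The only subtle point is ensuring that one is allowed to invoke equation~\eqref{q0 claim} with $q=2$, and that the Euler-product constant $4A_0$ is precisely matched to the $\frac{1}{4}$ prefactor outside the sum so that the $(\log\log x)^2$ coefficient is $A_0$ rather than $4A_0$.
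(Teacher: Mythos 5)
Your proposal is correct and follows essentially the same route as the paper: estimate $\mu(\omega_0)$ via the asymptotic for $\sum_{p\le t}\omega(p-1)$, use Lemma~\ref{mertenspartial} for $\mu(\omega_q)$ with $q\ge 2$, and then invoke Lemma~\ref{infinitesums}(a) to extract the constant $A_0$. The only cosmetic difference is that you unpack equation~\eqref{q0 claim} and run the partial summation for $\mu(\omega_0)$ yourself, whereas the paper cites Lemma~\ref{medium cov lemma} (whose proof is precisely that partial summation) by observing $\mu(\omega_0)=\cov(\omega_2,\omega_0)+O(1)$; the two are interchangeable, and your explicit bookkeeping of the squared error term $O((\log q)^2/\phi(q)^2)$ is accurate.
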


\begin{proof}
We begin by establishing asymptotics for $\mu(\omega_0)$ and $\mu(\omega_q)$ for $q \leq X$. First,
\[
\mu(\omega_0) = \sum_{p \leq x} \frac{\omega_0(p)}{p} = \sum_{p \leq x} \frac{\omega_2(p)\omega_0(p)}{p} = \cov(\omega_2,\omega_0)+O(1) = \frac{1}{2}(\log\log x)^2 + O(\log\log x)
\]
by Lemma~\ref{medium cov lemma} (with $x$ in place of~$z$). On the other hand,
\begin{align}\label{muomegaq}
\mu(\omega_q) = \sum_{p \leq x} \frac{\omega_q(p)}{p} = \sum_{\substack{p \leq x \\ p\equiv1\mod q}} \frac{1}{p} = \frac{\log\log x}{\phi(q)} + O\bigg( \frac{\log q}{\phi(q)} \bigg)
\end{align}
by Lemma \ref{mertenspartial}. Squaring, and using the fact that $\log q = o(\log\log x)$ for $q$ in the range of summation, yields
\begin{align*} 
\mu(\omega_q)^2 = \frac{(\log\log x)^2}{\phi(q)^2} + O\bigg(\frac{\log q}{\phi(q)^2} \log\log x \bigg).
\end{align*}

Inserting these estimates for $\mu(\omega_0)$ and $\mu(\omega_q)$ into equation~(\ref{Pn D new notation}), we obtain
\begin{align*}
D(x) &= \frac{\log 2}{2} (\log\log x)^2 + O(\log\log x) + \frac{1}{4} \sum_{2\le q \leq X} \Lambda(q) \bigg( \frac{(\log\log x)^2}{\phi(q)^2} + O\bigg(\frac{\log q}{\phi(q)^2} \log\log x \bigg) \bigg) \\
&= \frac{\log 2}{2} (\log\log x)^2 + \frac{1}{4}(\log\log x)^2 \sum_{2\le q \leq X} \frac{\Lambda(q)}{\phi(q)^2} + O\bigg(\log\log x \sum_{2\le q \leq X} \frac{\Lambda(q) \log q}{\phi(q)^2} \bigg) \\
&= \bigg( \frac{\log 2}{2} + \frac{1}{4}\sum_{2\le q \leq X} \frac{\Lambda(q)}{\phi(q)^2} \bigg) (\log\log x)^2 + O(\log\log x)
\end{align*}
by partial summation.
By Lemma~\ref{infinitesums}, we may replace the coefficient of $(\log\log x)^2$ by $A + O(1/X)$, obtaining
\[
D(x) = A(\log\log x)^2 + O\bigg(\frac{(\log\log x)^{3/2}}{(\log\log\log x)^2} \bigg),
\]
as claimed.
\end{proof}

The following lemma gives the asymptotic size of a double sum that will appear shortly in the proof of Proposition~\ref{moment asymptotics}.

\begin{lemma}\label{OOM of sum of partial derivs}
Recall from Theorem~\ref{main theorem} that $C = \frac{(\log 2)^2}{3} + 2 A_0 \log 2 + 4A_0^2 + B$. We have
\begin{multline}  \label{secondmoment}
\sum_{i=0}^{\rho(X)} \sum_{j=0}^{\rho(X)} Q_i \big( \mu(\omega_{q_0}),\dots,\mu(\omega_{q_{\rho(X)}}) \big) Q_j\big( \mu(\omega_{q_0}),\dots,\mu(\omega_{q_{\rho(X)}}) \big) \cov(\omega_{q_i},\omega_{q_j}) \\
= C (\log\log x)^3 + O\bigg(\frac{(\log\log x)^{5/2}}{\log\log\log x}\bigg).
\end{multline} 
\end{lemma}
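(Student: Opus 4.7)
The plan is to expand the double sum directly using the explicit form of the partial derivatives $Q_i$, split the resulting sum according to whether each index is zero or positive, and evaluate each piece using the covariance estimates of Lemmas~\ref{little cov lemma}--\ref{big cov lemma} together with Lemma~\ref{infinitesums}. Writing $\bm\mu=(\mu(\omega_{q_0}),\dots,\mu(\omega_{q_{\rho(X)}}))$ and noting that $Q_0\equiv\log 2$ while $Q_i(\bm\mu)=\tfrac12\Lambda(q_i)\mu(\omega_{q_i})$ for $i\ge1$, the left-hand side of \eqref{secondmoment} splits into three pieces: an $(i,j)=(0,0)$ piece equal to $(\log 2)^2\cov(\omega_0,\omega_0)$; a mixed piece (combining $(0,j)$ and $(i,0)$, using symmetry of $\cov$) equal to $\log 2\cdot\sum_{j\ge1}\Lambda(q_j)\mu(\omega_{q_j})\cov(\omega_{q_j},\omega_0)$; and a pure piece equal to $\tfrac14\sum_{i,j\ge1}\Lambda(q_i)\Lambda(q_j)\mu(\omega_{q_i})\mu(\omega_{q_j})\cov(\omega_{q_i},\omega_{q_j})$.

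Next I would insert the asymptotic formulas, using throughout that $\log\log z=\log\log x+O(1)$ since $z=x^{1/2k}$. Lemma~\ref{big cov lemma} immediately turns the first piece into $\tfrac13(\log 2)^2(\log\log x)^3+O((\log\log x)^2)$. For the mixed piece, equation~\eqref{muomegaq} gives $\mu(\omega_{q_j})=\log\log x/\phi(q_j)+O(\log q_j/\phi(q_j))$ and Lemma~\ref{medium cov lemma} gives $\cov(\omega_{q_j},\omega_0)=(\log\log z)^2/(2\phi(q_j))+O(\log\log z)$; multiplying and summing produces a main term of $\tfrac12\log 2\cdot(\log\log x)^3\sum_{q\le X}\Lambda(q)/\phi(q)^2$, which by Lemma~\ref{infinitesums}(a) equals $2A_0\log 2\cdot(\log\log x)^3+O((\log\log x)^{5/2}/(\log\log\log x)^2)$. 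For the pure piece, Lemma~\ref{little cov lemma} gives $\cov(\omega_{q_i},\omega_{q_j})=\log\log z/\phi([q_i,q_j])+O(1)$; again multiplying and summing produces a main term of $\tfrac14(\log\log x)^3\sum_{q_1,q_2\le X}\Lambda(q_1)\Lambda(q_2)/(\phi(q_1)\phi(q_2)\phi([q_1,q_2]))$, which by Lemma~\ref{infinitesums}(b) equals $(4A_0^2+B)(\log\log x)^3+O((\log\log x)^{5/2}/\log\log\log x)$. Summing the three main contributions gives $\bigl((\log 2)^2/3+2A_0\log 2+4A_0^2+B\bigr)(\log\log x)^3=C(\log\log x)^3$.

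The main obstacle is verifying that every cross-term arising from multiplying out the various approximations contributes only to the stated error. The key observations are that $\sum_q\Lambda(q)/\phi(q)^2$, $\sum_q\Lambda(q)\log q/\phi(q)^2$, and the double series $\sum_{q_1,q_2}\Lambda(q_1)\Lambda(q_2)\log q_1/(\phi(q_1)\phi(q_2)\phi([q_1,q_2]))$ are all absolutely convergent, while $\sum_{q\le X}\Lambda(q)/\phi(q)\ll\log X\ll\log\log\log x$ by partial summation. Combining these bounds with the leading powers of $\log\log x$ from the asymptotic formulas shows that every such cross-term is at worst $O((\log\log x)^2(\log\log\log x)^2)$, comfortably within the claimed error. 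The genuinely dominant error is therefore the truncation tail from Lemma~\ref{infinitesums}(b), whose $O(\log X/X)$ factor against $(\log\log x)^3$ yields exactly $O((\log\log x)^{5/2}/\log\log\log x)$, matching the stated bound.
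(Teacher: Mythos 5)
Your proof is correct and follows essentially the same route as the paper's: split the double sum by whether each index is zero, insert the covariance asymptotics from Lemmas~\ref{little cov lemma}--\ref{big cov lemma} together with the formula~\eqref{muomegaq} for $\mu(\omega_q)$, and identify the constant via Lemma~\ref{infinitesums}. Your error accounting is in fact slightly more explicit than the paper's (which passes over the cross-term bookkeeping with a terse ``by partial summation''), and you correctly pinpoint the dominant error as the tail in Lemma~\ref{infinitesums}(b).
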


\begin{proof}
We begin by computing the partial derivatives appearing in the double sum. We have
\[
Q_0 \big( \mu(\omega_{q_0}),\dots,\mu(\omega_{q_{\rho(X)}}) \big) = \log 2
\]
and, for $i \neq 0$, we use equation~(\ref{muomegaq}) to write
\[
Q_i \big( \mu(\omega_{q_0}),\dots,\mu(\omega_{q_{\rho(X)}}) \big) = \frac{1}{2}\Lambda(q_i)\mu(\omega_{q_i}) = \frac{1}{2} \frac{\Lambda(q_i)}{\phi(q_i)} \log\log x + O\bigg(\frac{\Lambda(q_i) \log q_i}{\phi(q_i)} \bigg).
\]
So, by Lemma~\ref{big cov lemma}, the summand on the left-hand side of equation~\eqref{secondmoment} corresponding to $i = j = 0$ is of the form
\begin{align*}
Q_0 \big( \mu(\omega_{q_0}),\dots,\mu(\omega_{q_{\rho(X)}}) \big) & Q_0 \big( \mu(\omega_{q_0}),\dots,\mu(\omega_{q_{\rho(X)}}) \big) \cov(\omega_{q_0}, \omega_{q_0}) \\
&= \frac{(\log 2)^2}{3}(\log\log x)^3 + O((\log\log x)^2);
\end{align*}
similarly, by Lemma~\ref{medium cov lemma} the summands corresponding to $i = 0$ and $j \neq 0$ are of the form
\begin{align*}
Q_0 \big( \mu(\omega_{q_0}),\dots,\mu(\omega_{q_{\rho(X)}}) \big) & Q_j \big( \mu(\omega_{q_0}),\dots,\mu(\omega_{q_{\rho(X)}}) \big) \cov(\omega_{q_0}, \omega_{q_j}) \\
&= \frac{\log 2}{4}\frac{\Lambda(q_j)}{\phi(q_j)^2}(\log\log x)^3 + O\bigg(\frac{\Lambda(q_i)(\log\log x)^2}{\phi(q_j)^2}\bigg).
\end{align*}
and the summands corresponding to $i \neq 0$ and $j = 0$ are the same up to labeling. Finally, by Lemma~\ref{little cov lemma} we have that the summands on the left-hand side of equation~\eqref{secondmoment} corresponding to $i \neq 0$ and $j \neq 0$ are of the form
\begin{align*}
Q_i \big( \mu(\omega_{q_0}),\dots,\mu(\omega_{q_{\rho(X)}}) \big) & Q_j \big( \mu(\omega_{q_0}),\dots,\mu(\omega_{q_{\rho(X)}}) \big) \cov(\omega_{q_i}, \omega_{q_j}) \\
&= \frac{1}{4}\frac{\Lambda(q_i)\Lambda(q_j)}{\phi(q_i)\phi(q_j)\phi([q_i, q_j])}(\log\log x)^3 + O\bigg(\frac{\Lambda(q_i)\Lambda(q_j)}{\phi(q_i)\phi(q_j)}(\log\log x)^2\bigg).
\end{align*}
Combining these last three evaluations results in
\begin{align}
\sum_{i=0}^{\rho(X)} \sum_{j=0}^{\rho(X)} & Q_i \big( \mu(\omega_{q_0}),\dots,\mu(\omega_{q_{\rho(X)}}) \big) Q_j\big( \mu(\omega_{q_0}),\dots,\mu(\omega_{q_{\rho(X)}}) \big) \cov(\omega_{q_i},\omega_{q_j}) \nonumber \\
&= \frac{(\log 2)^2}{3}(\log\log x)^3 + O((\log\log x)^2) \nonumber \\
&\qquad{}+ 2\sum_{j = 1}^{\rho(X)} \bigg( \frac{\log 2}{4}\frac{\Lambda(q_j)}{\phi(q_j)^2}(\log\log x)^3 + O\bigg(\frac{\Lambda(q_i)(\log\log x)^2}{\phi(q_j)^2}\bigg) \bigg) \nonumber \\
&\qquad{}+ \sum_{i = 1}^{\rho(X)} \sum_{j = 1}^{\rho(X)} \bigg( \frac{1}{4}\frac{\Lambda(q_i)\Lambda(q_j)}{\phi(q_i)\phi(q_j)\phi([q_i, q_j])}(\log\log x)^3 + O\bigg(\frac{\Lambda(q_i)\Lambda(q_j)}{\phi(q_i)\phi(q_j)}(\log\log x)^2\bigg)\bigg) \notag \\
&= \bigg( \frac{(\log 2)^2}{3} + \frac{\log 2}{2} \sum_{i = 1}^{\rho(X)} \frac{\Lambda(q_i)}{\phi(q_j)^2} + \frac{1}{4} \sum_{i = 1}^{\rho(X)} \sum_{j = 1}^{\rho(X)} \frac{\Lambda(q_i)\Lambda(q_j)}{\phi(q_i)\phi(q_j)\phi([q_i, q_j])} \bigg) (\log\log x)^3 \notag \\
&\qquad{}+ O((\log\log x)^2(\log\log\log x)^2)
\label{moment double sum messy coeff}
\end{align}
by partial summation.
By Lemma \ref{infinitesums}, the coefficient of $(\log\log x)^3$ above is equal to
\[
\frac{(\log 2)^2}{3} + 2\log 2 A_0 + 4A_0^2 + B + O\bigg( \frac{1}{(\log\log x)^{1/2}\log\log\log x} \bigg).
\]
Inserting this expression into equation~\eqref{moment double sum messy coeff} finishes the proof.
\end{proof}

We now have all of the auxiliary results needed to carry out the asymptotic evaluation of the moments~$M_h(x)$.

\begin{proof}[Proof of Proposition~\ref{moment asymptotics}]
We start from equation~\eqref{two way}:
\begin{equation*}
M_h(x) = \sum_{\substack{h_0,h_1,h_2\ge0 \\ h_0+h_1+h_2=h}} \binom h{h_0,h_1,h_2} \sum_{n \leq x} \big( \log 2 \cdot F_{\omega_0}(n) \big)^{h_0} S_1^{h_1}S_2^{h_2}.
\end{equation*}
If $h\ge1$ is odd, then Lemma~\ref{lower order terms} applies to every inner sum, yielding
\begin{align*}
M_h(x) &\ll \sum_{\substack{h_0,h_1,h_2\ge0 \\ h_0+h_1+h_2=h}} \binom h{h_0,h_1,h_2} x(\log\log x)^{3h/2 - 1/4}(\log\log\log x)^{2h} \\
&\ll x(\log\log x)^{3h/2 - 1/4}(\log\log\log x)^{2h},
\end{align*}
since the implicit constant may depend upon~$h$.
In particular, $M_h(x) = o\big( x(\log\log x)^{3h/2} \big)$ for each odd $h$, as required.

On the other hand, if $h\ge2$ is even, then Lemma~\ref{lower order terms} applies to all summands except those for which $h_1=0$, so that
\begin{equation} \label{other hand}
\begin{split}
M_h(x) &= \sum_{\substack{h_0,h_1,h_2\ge0 \\ h_0+h_1+h_2=h \\ h_1=0}} \binom h{h_0,h_1,h_2} \big( \log 2 \cdot F_{\omega_0}(n) \big)^{h_0} S_2^{h_2} +O \big( x(\log\log x)^{3h/2 - 1/4}(\log\log\log x)^{2h} \big) \\
&= \sum_{\substack{\beta\le B_h \\ k_{h\beta}=h}} r_{h\beta} \prod_{j=1}^{\tilde k_{h\beta}} \mu(\omega_{q_{w(h,\beta, j)}}) \sum_{n \leq x} \prod_{i=1}^h F_{\omega_{q_{v(h,\beta, i)}}}(n) +O \big( x(\log\log x)^{3h/2 - 1/4}(\log\log\log x)^{2h} \big)
\end{split}
\end{equation}
by Lemma~\ref{matching main terms lemma}. Notice, in this translation of notation, that factors of the form $\mu(\omega_q)$ on the right-hand side all arise from the term $S_2^{h_2}$; in particular, $\tilde k_{h\beta} = h_2$, and each ${q_{w(h,\beta, j)}}$ is a prime power not exceeding $X$ (rather than~$0$), so that $\mu(\omega_{q_{w(h,\beta, j)}}) \ll \log\log x$ by equation~\eqref{muomegaq}. Similarly, of the factors of the form $F_{\omega_q}$, we see that $h_0$ of them are $F_{\omega_0}$, while the other $h_2$ are of the form $F_{\omega_q}$ for prime powers~$q$.
Therefore, in the main term in equation~\eqref{other hand}, we may apply Proposition~\ref{summing products of k Fs} with $\ell=h_0$ and $k=h=h_0+h_2$ to obtain
\begin{align}
\sum_{\substack{\beta\le B_h \\ k_{h\beta}=h}} & r_{h\beta} \prod_{j=1}^{\tilde k_{h\beta}} \mu(\omega_{q_{w(h,\beta, j)}}) \sum_{n \leq x} \prod_{i=1}^h F_{\omega_{q_{v(h,\beta, i)}}}(n) \notag \\
&= \sum_{\substack{\beta\le B_h \\ k_{h\beta}=h}} r_{h\beta} \prod_{j=1}^{\tilde k_{h\beta}} \mu(\omega_{q_{w(h,\beta, j)}}) \bigg( \frac x{(h/2)!} \sum_{\tau\in T_h} \prod_{i=1}^{h/2} \cov(\omega_{q_{v(h,\beta,\Upsilon_1(i))}},\omega_{q_{v(h,\beta,\Upsilon_2(i))}}) + O( x(\log\log x)^{(2h_0+h)/2 - 1} ) \bigg) \notag \\
&= \frac x{(h/2)!} \sum_{\substack{\beta\le B_h \\ k_{h\beta}=h}} r_{h\beta} \prod_{j=1}^{\tilde k_{h\beta}} \mu(\omega_{q_{w(h,\beta, j)}}) \sum_{\tau\in T_h} \prod_{i=1}^{h/2} \cov(\omega_{q_{v(h,\beta,\Upsilon_1(i))}},\omega_{q_{v(h,\beta,\Upsilon_2(i))}}) \notag \\
&\qquad{}+ O( (\log\log x)^{h_2} \cdot x(\log\log x)^{(3h_0+h_2)/2 - 1} );  \label{for this main term}
\end{align}
note that this last error term is exactly $x(\log\log x)^{3h/2-1}$.
By Proposition~\ref{Rh magic Phi prop} with $y_j=\mu(\omega_{q_j})$ and $z_{ij}=\cov(\omega_{q_i},\omega_{q_j})$,
\begin{align*}
\frac x{(h/2)!} & \sum_{\substack{\beta\le B_h \\ k_{h\beta}=h}} r_{h\beta} \prod_{j=1}^{\tilde k_{h\beta}} \mu(\omega_{q_{w(h,\beta, j)}}) \sum_{\tau\in T_h} \prod_{i=1}^{h/2} \cov(\omega_{q_{v(h,\beta,\Upsilon_1(i))}},\omega_{q_{v(h,\beta,\Upsilon_2(i))}}) \\
&= s_h x \bigg( \sum_{i=0}^{\rho(X)} \sum_{j=0}^{\rho(X)} Q_i \big( \mu(\omega_{q_0}),\dots,\mu(\omega_{q_{\rho(X)}}) \big) Q_j\big( \mu(\omega_{q_0}),\dots,\mu(\omega_{q_{\rho(X)}}) \big) \cov(\omega_{q_i},\omega_{q_j}) \bigg)^{h/2} \\
&= s_h x \bigg(C (\log\log x)^3 + O\bigg(\frac{(\log\log x)^{5/2}}{\log\log\log x}\bigg)\bigg)^{h/2} \\
&= C^{h/2} s_h x (\log\log x)^{3h/2} + O(x(\log\log x)^{3(h - 1)/2})
\end{align*}
by Lemma~\ref{OOM of sum of partial derivs}.
Combining this evaluation with equations~\eqref{other hand} and~\eqref{for this main term} yields
\[
\lim_{x \to \infty} \frac{M_h(x)}{C^{h/2}x(\log\log x)^{3h/2}} = s_h = \frac{h!}{(h/2)!2^{h/2}},
\]
which completes the proof when $h$ is even.
\end{proof}

\section{The method of moments}\label{flourish}

We now describe the argument that deduces the Erd\H os--Kac law for $\log G(n)$ (Theorem~\ref{main theorem}) from the asymptotic formula for the moments given in Proposition~\ref{moment asymptotics}. While this deduction is fairly standard, for the sake of completeness we include the rest of the proof.

For any real number $u$ and positive real number $x$, let $k_x(u)$ denote the number of integers $n \leq x$ such that
$P_n(x) < D(x) + u \cdot \sqrt{C}(\log\log x)^{3/2}$.
Then $\sigma_x(u) = k_x(u)/x$ is the cumulative distribution function of the random variable $Y_x$ obtained by choosing $n\le x$ uniformly at random and then calculating $(P_n(x) - D(x))/\sqrt{C}(\log\log x)^{3/2}$; the $h$th moment of this random variable equals
\[
\int_{-\infty}^\infty u^h \,d\sigma_x(u) = \frac{1}{x} \sum_{n \leq x} \bigg( \frac{P_n(x) - D(x)}{\sqrt C(\log\log x)^{3/2}} \bigg)^h = \frac{M_h(x)}{x C^{h/2} ( \log\log x )^{3h/2}}.
\]
For every fixed $h$, by Proposition~\ref{moment asymptotics}, this $h$th moment converges (as $x\to\infty$) to $s_h = h!/2^{h/2}(\frac h2)!$ when $h$ is even and to $0$ when $h$ is odd. By the ``method of moments'' from probability (see~\cite[Theorem 30.2]{bil86}), the sequence $\{Y_x\}$ of random variables converges in distribution to the unique random variable with these moments, which is the standard normal random variable. (This result is due to Chebyshev for the normal distribution and was later generalized to any random variable that is uniquely determined by its moments.)
In other words, for any real number $u$,
\begin{align}\label{erdos kac for Q}
\lim_{x \to \infty} \frac{1}{x} \#\bigg\{n \leq x \colon \frac{P_n(x) - D(x)}{\sqrt C(\log\log x)^{3/2}} < u \bigg\} = \frac{1}{\sqrt{2\pi}} \int_{-\infty}^u e^{-t^2/2} \, dt.
\end{align}

On the other hand, Propositions~\ref{log G as a polynomial intro version} and~\ref{constant in mean} imply that
\begin{equation} \label{sec7eq}
\frac{P_n(x) - D(x)}{\sqrt C(\log\log x)^{3/2}} = \frac{\log G(n) - A(\log\log x)^2}{\sqrt C(\log\log x)^{3/2}} + O\bigg( \frac1{\log\log\log x} \bigg)
\end{equation}
for all but $O(x/\log\log\log x)$ integers $n \leq x$. Furthermore, $\log\log x = (\log\log n) \big(1 + O(1/\log\log x) \big)$ when $n>x/\log\log\log x$, and therefore we may modify equation~\eqref{sec7eq} to
\begin{equation*}
\frac{P_n(x) - D(x)}{\sqrt C(\log\log x)^{3/2}} = \frac{\log G(n) - A(\log\log n)^2}{\sqrt C(\log\log n)^{3/2}} + O\bigg( \frac1{\log\log\log x} \bigg)
\end{equation*}
for all but $O(x/\log\log\log x)$ integers $n \leq x$.
It follows from this estimate that we also have
\begin{equation}  \label{erdos kac for realz}
\lim_{x \to \infty} \frac{1}{x} \#\bigg\{n \leq x \colon \frac{\log G(n) - A(\log\log n)^2}{\sqrt C(\log\log n)^{3/2}} < u \bigg\} = \frac{1}{\sqrt{2\pi}} \int_{-\infty}^u e^{-t^2/2} \, dt
\end{equation}
(by bounding, for a given real number $u$, the left-hand side of equation~\eqref{erdos kac for realz} above and below by the left-hand side of equation~\eqref{erdos kac for Q} with $u$ replaced, respectively, by $u+\ep$ and $u-\ep$), which is equivalent to the conclusion of Theorem~\ref{main theorem}.

\section{Counting subgroups up to isomorphism, and maximal orders}\label{max order section}

Recall that $I(n)$ denotes the number of isomorphism classes of subgroups of $\Znt$. We are able to quickly establish an Erd{\H o}s--Kac law for $I(n)$ (Theorem~\ref{logIn EK thm}) by relating $\log I(n)$ to two other $\phi$-additive functions that have already been analyzed by Erd{\H o}s and Pomerance.

\begin{lemma}  \label{log In bounds lemma}
For any positive integer $n$, we have $\omega(\phi(n)) \log 2 \leq \log I(n) \leq \Omega(\phi(n)) \log 2$.
\end{lemma}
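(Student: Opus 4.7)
The plan is to split $I(n)$ into local contributions at each prime and bound each factor between $2$ and $2^{\nu_p(\phi(n))}$. Since every finite abelian group is the internal direct product of its Sylow $p$-subgroups, and the isomorphism class of the group is determined by the isomorphism classes of its Sylows, every subgroup $H \leq \Znt$ decomposes uniquely as a product of its $p$-Sylows, each of which is a subgroup of the corresponding $p$-Sylow of $\Znt$. Consequently,
\[
I(n) = \prod_{p \mid \phi(n)} I_p(n),
\]
where $I_p(n)$ denotes the number of isomorphism classes of subgroups of the $p$-Sylow of $\Znt$. Taking logarithms reduces the lemma to showing $\log 2 \le \log I_p(n) \le \nu_p(\phi(n)) \log 2$ for each prime $p \mid \phi(n)$.

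For the lower bound: since $p \mid \phi(n)$, the $p$-Sylow subgroup of $\Znt$ is nontrivial, so it contains at least the trivial subgroup and itself as two nonisomorphic subgroups. Thus $I_p(n) \geq 2$ for each $p \mid \phi(n)$, and multiplying over all such $p$ gives $I(n) \geq 2^{\omega(\phi(n))}$.

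For the upper bound, I will use the partition-theoretic description from Section~\ref{subgroups of p groups section}. By Lemma~\ref{p Sylow structure lemma}, the $p$-Sylow of $\Znt$ is isomorphic to $\Z_{p^{\alpha_1}} \times \Z_{p^{\alpha_2}} \times \cdots$ for a partition $\balpha$ with $|\balpha| = \sum_j \alpha_j = \nu_p(\phi(n))$. As recalled just before Definition~\ref{omega q def}, the isomorphism classes of subgroups of this $p$-group are in one-to-one correspondence with subpartitions $\bbeta \preceq \balpha$. Hence $I_p(n)$ equals the number of such subpartitions.

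The key combinatorial observation is that each subpartition $\bbeta \preceq \balpha$ is uniquely determined by the subset of boxes $\{(i,j) \colon 1 \le i \le \beta_j\}$ of the Young diagram $\{(i,j)\colon 1\le i\le \alpha_j\}$ of $\balpha$. Distinct subpartitions yield distinct subsets, so the number of subpartitions is at most the number of subsets of the $|\balpha|$-element Young diagram, namely $2^{|\balpha|} = 2^{\nu_p(\phi(n))}$. Taking the product over all $p \mid \phi(n)$ yields $I(n) \le 2^{\Omega(\phi(n))}$, and taking logarithms completes the proof. There is no genuine obstacle here; the only point requiring a modicum of care is the translation between subgroups of an abelian $p$-group up to isomorphism and subpartitions of the associated partition, which is already set up in Section~\ref{subgroups of p groups section}.
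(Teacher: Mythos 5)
Your proof is correct and takes essentially the same approach as the paper: decompose $\log I(n) = \sum_{p\mid\phi(n)} \log I_p(n)$, obtain the lower bound from the trivial subgroup and the full Sylow $p$-subgroup, and obtain the upper bound by identifying $I_p(n)$ with the number of subpartitions of $\balpha$ and then injecting subpartitions into subsets of the Ferrers diagram of $\balpha$. The only cosmetic difference is that you make the injection into subsets of the diagram slightly more explicit than the paper does.
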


\begin{proof}
Let $I_p(n)$ denote the number of isomorphism classes of $p$-subgroups of $\Znt$; as we saw with $G(n)$, we again have
\[
\log I(n) = \sum_{p \mid \phi(n)} \log I_p(n).
\]
This is already enough to imply the lower bound $\omega(\phi(n)) \log 2 \leq \log I(n)$: for every prime $p\mid\phi(n)$, the quantity $I_p(n)$ counts at least two subgroups of $\Znt$, namely the Sylow $p$-subgroup and the trivial subgroup.

For any such prime $p$, write the Sylow $p$-subgroup of $\Znt$ as $\Z_{p^{\alpha_1}} \times \Z_{p^{\alpha_2}} \times \cdots$ for some partition $\balpha$ of the integer~$\nu_p(\phi(n))$. Then $I_p(n)$ is exactly the number of subpartitions of~$\balpha$. Certain subsets of the Ferrers diagram corresponding to $\balpha$ correspond to subpartitions, while many subsets do not; but the total number of subsets, $2^{\nu_p(\phi(n))}$, is certainly an upper bound for the number of subpartitions. We conclude that
\begin{equation}  \label{wasteful?}
\sum_{p \mid \phi(n)} \log I_p(n) \leq \sum_{p \mid \phi(n)} \log \big( 2^{\nu_p(\phi(n))} \big) = \sum_{p \mid \phi(n)} {\nu_p(\phi(n))} \log2 = \Omega(\phi(n)) \log 2,
\end{equation}
which is the desired upper bound.
\end{proof}

\begin{proof}[Proof of Theorem~\ref{logIn EK thm}]
Erd{\H o}s and Pomerance~\cite{ep85} have shown that both $\omega(\phi(n))$ and $\Omega(\phi(n))$ satisfy Erd{\H o}s--Kac laws, in both cases with mean $\frac{1}{2} (\log\log n)^2$ and variance is $\frac{1}{3} (\log\log n)^3$. Thus, as a consequence of Lemma~\ref{log In bounds lemma}, $\log I(n)$ satisfies an Erd{\H o}s--Kac law with mean $\frac{\log 2}{2} (\log\log n)^2$ and variance $\frac{\log 2}{3} (\log\log n)^3$ as well.
\end{proof}

It might be surprising that the simple bounds from Lemma~\ref{log In bounds lemma} suffice to establish this Erd{\H o}s--Kac law, despite how seemingly wasteful the inequality~\eqref{wasteful?} is. We view this as a reflection of the anatomical fact that typically, most primes dividing $\phi(n)$ are large and most large primes dividing $\phi(n)$ do so only to the first power.

We turn now to the question of determining how large the values of $G(n)$ and $I(n)$ can become. We start with a pair of arguments (an upper bound and a construction) that together show that the maximal order of $\log G(n)$ has order of magnitude ${(\log x)^2/\log\log x}$.
In both arguments, it will be helpful to observe that
\[
\lambda_p(n) = \sum_{j=1}^{\lambda_p(n)} 1 \le \sum_{j=1}^{\lambda_p(n)} \ovomega_{p^j}(n)
\]
for any prime $p\mid\phi(n)$, and therefore
\begin{equation}  \label{first constraint}
\sum_{p\mid\phi(n)} \lambda_p(n) \log p \le \sum_{p\mid\phi(n)} \sum_{j=1}^{\lambda_p(n)} \ovomega_{p^j}(n) \log p = \log\phi(n) < \log x
\end{equation}
by equation~\eqref{sum of its parts}.

\begin{proof}[Proof of the upper bound in Theorem~\ref{max order Gn}]
Proposition~\ref{gpn as a polynomial} gives
\begin{align}
\log G(n) = \sum_{p \mid \phi(n)} \log G_p(n) &= \sum_{p \mid \phi(n)} \bigg( \frac{\log p}4 \sum_{j=1}^{\lambda_p(n)} \ovomega_{p^j}(n)^2 + O(\lambda_p(n) \log p) \bigg) \notag \\
&= \frac14 \sum_{p \mid \phi(n)} \sum_{j=1}^{\lambda_p(n)} \ovomega_{p^j}(n)^2 \log p + O\bigg( \sum_{p \mid \phi(n)} \lambda_p(n) \log p \bigg) \notag \\
&= \frac14 \sum_{p \mid \phi(n)} \sum_{j=1}^{\lambda_p(n)} \ovomega_{p^j}(n)^2 \log p + O(\log x)
\label{about to pull one out}
\end{align}
by equation~\eqref{first constraint}. On the other hand,
\begin{equation}  \label{classical omega bound}
\ovomega_{p^j}(n) \leq \omega_{p^j}(n) + 2 \leq \omega(n) + 2 < \frac{\log x}{\log\log x} \bigg( 1 + \frac1{\log\log x} \bigg)
\end{equation}
by the classical upper bound for $\omega(n)$~\cite[Theorem 2.10]{mv07}. We use this bound on one of the two factors of $\ovomega_{p^j}(n)$ in each summand on the right-hand side of equation~\eqref{about to pull one out}, obtaining
\begin{align*}
\log G(n) &< \frac14 \sum_{p \mid \phi(n)} \sum_{j=1}^{\lambda_p(n)} \ovomega_{p^j}(n) \log p \cdot \frac{\log x}{\log\log x} \bigg( 1 + \frac1{\log\log x} \bigg) + O(\log x) \\
&= \frac{\log x}{4\log\log x} \bigg( 1 + \frac1{\log\log x} \bigg) \log \phi(n) + O(\log x) < \frac{(\log x)^2}{4\log\log x} \bigg( 1 + \frac1{\log\log x} \bigg)
\label{about to pull one out}
\end{align*}
by equation~\eqref{first constraint} again.
\end{proof}

\begin{proof}[Proof of the lower bound in Theorem~\ref{max order Gn}]
Choose $B=B(3)$ so that Theorem~\ref{bombvino} is valid, and set
\begin{align*}
V &= \frac{(\log x)^2}{(\log\log x)^{2B+1}} \bigg( 1 - \frac1{\log\log x} \bigg) \\
Q &= \frac{\log x}{(\log\log x)^{2B+1}};
\end{align*}
note that $Q < V^{1/2}/(\log V)^B$ when $x$ is large enough.
Thus by equation~\eqref{theta BV},
\[
\sum_{Q < p < 2Q} \bigg|\theta(V; p, 1) - \frac V{p-1}\bigg| \ll \frac{V}{(\log V)^3}.
\]
Since the number of primes between $Q$ and $2Q$ is $\gg Q/\log Q$, we may choose a prime $p$ in that interval such that
\begin{align}
\theta(V, p, 1) &= \frac V{p-1} + O\bigg( \frac{V}{(\log V)^3} \frac{\log Q}Q \bigg) \notag \\
&\le \frac VQ + O\bigg( \frac{V}{(\log V)^3} \frac{\log Q}Q \bigg) = \log x - \frac{\log x}{\log\log x} + O\bigg( \frac{\log x}{(\log\log x)^2} \bigg).
\label{less than log x}
\end{align}
Now, fixing this prime $p$ that was chosen above, consider the integer
\[
n = \prod_{\substack{q \le V \\ q \equiv 1 \mod p}} q = e^{\theta(V, p, 1)},
\]
where $p$ is the prime chosen above; by equation~\eqref{less than log x}, we see that $n<x$ when $x$ is sufficiently large. Notice also that $\log p = \log\log x + O(\log\log\log x)$ and $\log V = 2\log\log x + O(\log\log\log x)$, and therefore
\[
\omega_p(n) = \pi(V;p,1) \ge \frac{\theta(V;p,1)}{\log V} \ge \frac{\log x}{2\log\log x} \bigg( 1 + O\bigg( \frac{\log\log\log x}{\log\log x} \bigg) \bigg)
\]
by equation~\eqref{less than log x}. Consequently,
\begin{align*}
\log G(n) &\ge \log G_p(n) \\
&= \frac{\log p}{4} \sum_{j = 1}^{\lambda_p(n)} \ovomega_{p^j}(n)^2 + O(\lambda_p(n) \log p) \\
&\ge \frac{\log p}{4} \omega_{p}(n)^2 + O(\lambda_p(n) \log p) \\
&\ge \frac{\log\log x + O(\log\log\log x)}{4} \bigg( \frac{\log x}{2\log\log x} \bigg)^2 \bigg( 1 + O\bigg( \frac{\log\log\log x}{\log\log x} \bigg) \bigg) + O(\lambda_p(n) \log p) \\
&= \frac{\log^2 x}{16\log\log x} \bigg( 1 + O\bigg( \frac{\log\log\log x}{\log\log x} \bigg) \bigg) + O(\lambda_p(n) \log\log x).
\end{align*}
Since equation~\eqref{first constraint} implies that $\lambda_p(n) < \log x$, the above estimate establishes the desired lower bound.
\end{proof}

We believe that the upper bound (with leading constant $\frac14$) gives the true asymptotic size of the maximal order of $\log G(n)$; in particular, if one assumes the Elliott--Halberstam conjecture, then the construction giving the lower bound can easily be modified to produce a leading constant $\frac14$ instead of the current~$\frac1{16}$.

Theorem~\ref{max order Gn} shows that the maximal order of $\log G(n)$ is substantially larger than the typical size of $\log G(n)$. The same phenomenon holds, to a somewhat lesser degree, for $\log I(n)$, which we now show via another pair of arguments (an upper bound and a construction) after the following preliminary lemma.

\begin{lemma}  \label{log recip phi sum}
For any $x\ge3$ and any integer $m\le x$,
\[
\sum_{p \mid m} \frac{1}{\log p} < \frac{\log x}{(\log\log x)^2} + O\bigg( \frac{\log x}{(\log\log x)^3} \bigg).
\]
\end{lemma}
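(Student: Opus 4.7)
The plan is to reduce to the extremal case where $m$ is a primorial (a product of the first few primes) and then estimate the resulting sum of $1/\log p$ directly via the prime number theorem. The guiding intuition is that, given the constraint $\log m \le \log x$ on the total ``weight'' $\sum_{p\mid m}\log p$, the sum $\sum_{p\mid m} 1/\log p$ is maximized by concentrating the weight on the smallest possible primes.

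First, let $q_1<q_2<\cdots<q_s$ denote the distinct prime divisors of $m$, and let $p_1<p_2<\cdots$ denote the sequence of all primes in increasing order. The elementary comparison $q_i\ge p_i$ (the $i$th smallest element of any set of primes dominates the $i$th prime overall) gives $1/\log q_i \le 1/\log p_i$ for each $i$, whence
\[
\sum_{p\mid m} \frac{1}{\log p} \le \sum_{i=1}^s \frac{1}{\log p_i} = \sum_{p\le p_s} \frac{1}{\log p}.
\]
Next, I would use the constraint $m\le x$ to control $p_s$ from above: since $\theta(p_s) = \sum_{i=1}^s \log p_i \le \sum_{i=1}^s \log q_i \le \log m \le \log x$, the prime number theorem in the form $\theta(t)=t+O\!\left(t/(\log t)^A\right)$ (any fixed $A$) gives $p_s \le \log x + O\!\left(\log x/\log\log x\right)$.

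Finally, a routine partial summation based on the same form of the prime number theorem yields, uniformly for large $y$,
\[
\sum_{p\le y} \frac{1}{\log p} = \frac{y}{(\log y)^2} + O\!\bigg(\frac{y}{(\log y)^3}\bigg).
\]
Substituting $y=p_s$, writing $p_s = \log x\,(1+\epsilon)$ with $\epsilon = O(1/\log\log x)$, and Taylor-expanding $(\log y)^{-2}$ around $\log\log x$ produces the main term $\log x/(\log\log x)^2$ with error $O(\log x/(\log\log x)^3)$, exactly matching the statement of the lemma.

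No serious obstacle is anticipated; the entire argument is a short combination of a rearrangement inequality (each $q_i\ge p_i$) with standard consequences of the prime number theorem. The only place that demands a little care is the bookkeeping of the error term when substituting $y=p_s$ into the asymptotic for $\sum_{p\le y}1/\log p$, since one must verify that the $\epsilon$-perturbation in $p_s$ contributes only to the $O\!\left(\log x/(\log\log x)^3\right)$ error and does not leak into the main term—but this is purely a Taylor-expansion check.
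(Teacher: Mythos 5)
Your proof is correct and follows essentially the same path as the paper's: both reduce to the extremal primorial case via the elementary comparison $q_i \ge p_i$ (the paper does this by constructing $m_0 = \prod_{p\le y}p$ with $\pi(y)=\omega(m)$, which is the same thing as summing over $p\le p_s$), then bound the largest relevant prime by $\log x + O(\log x/\log\log x)$ via $\theta$ and the prime number theorem, and finish with partial summation for $\sum_{p\le y}1/\log p$.
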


\begin{proof}
First suppose that $m_0 = \prod_{p\le y} p$ for some real number~$y$. Then $\log x > \log m_0 = \sum_{p\le y} \log p = \theta(y) = y + O(y/\log y)$ by the prime number theorem, which implies that $y < \log x + O(\log x/\log\log x)$. Then, by partial summation,
\[
\sum_{p \mid m_0} \frac{1}{\log p} = \sum_{p \le y} \frac{1}{\log p} = \frac{y}{\log^2 y} < \frac{\log x}{(\log\log x)^2} + O\bigg( \frac{\log x}{(\log\log x)^3} \bigg).
\]

For general $m$, choose a real number $y$ such that $\pi(y) = \omega(m)$, and define $m_0 = \prod_{p\le y} p$. Then $m_0 \le m \le x$, while $\sum_{p \mid m_0} 1/\log p \ge \sum_{p \mid m} 1/\log p$ since both sums have the same number of summands and each individual summand in the first sum is at least as large as the corresponding summand in the second sum. Consequently, the desired upper bound for $\sum_{p \mid m} 1/\log p$ follows from the already established upper bound for $\sum_{p \mid m_0} 1/\log p$.
\end{proof}

\begin{proof}[Proof of the upper bound in Theorem~\ref{max order In}]
Given $n \leq x$, we write
\[
\phi(n) = \prod_{p^{k_p} \parallel \phi(n)} p^{k_p};
\]
since $\phi(n) \le n \leq x$,
\begin{equation}  \label{Iup constraint}
\sum_{p^{k_p} \parallel \phi(n)} k_p \log p = \log\phi(n) \le \log x.
\end{equation}
For any prime $p$ dividing $\phi(n)$, the $p$-Sylow subgroup of $\Znt$ is of the form $\Z_{p^{\alpha_1}} \times \Z_{p^{\alpha_2}} \times \cdots$ for some partition $\balpha$ of the integer~$k_p$. As discussed at the beginning of Section~\ref{subgroups of p groups section}, $I_p(n)$ is the number of subpartitions of $\balpha$. We bound this number crudely by noting that every subpartition of $\balpha$ is a partition of some integer $j \in \{0,1,\dots,k_p\}$; therefore, with $P(m)$ denoting the usual partition function,
\begin{equation}  \label{partitions arise again}
I_p(n) \leq \sum_{j = 0}^{k_p} P(j) \leq (k_p + 1) P(k_p).
\end{equation}
A consequence of Lehmer's formula for the partition function, as described in~\cite[proof of Theorem 2.1]{BO}, is that for every positive integer~$m$,
\[
(m+1)P(m) < (m+1)\cdot \frac{\sqrt3}{12m} \bigg( 1+\frac1{\sqrt m} \bigg) \exp\bigg( \frac\pi6\sqrt{24m-1} \bigg) < \exp\bigg( \pi\sqrt{\frac23} m \bigg),
\]
where the last inequality can be verified by an easy calculation. In particular, the upper bound~\eqref{partitions arise again} implies that $\log I_p(n) < \pi\sqrt{2k_p/3}$, and thus
\begin{equation}  \label{pre CS}
\log I(n) = \sum_{p \mid \phi(n)} \log I_p(n) < \pi\sqrt{\frac23} \sum_{p \mid \phi(n)} \sqrt{k_p}.
\end{equation}
But by Cauchy--Schwarz,
\[
\bigg( \sum_{p \mid \phi(n)} \sqrt{k_p} \bigg)^2 \le \bigg( \sum_{p \mid \phi(n)} k_p \log p \bigg) \bigg( \sum_{p \mid \phi(n)} \frac{1}{\log p} \bigg) \le \log x \cdot \frac{\log x}{(\log\log x)^2} \bigg( 1 + O\bigg( \frac1{\log\log x} \bigg) \bigg)
\]
by equation~\eqref{Iup constraint} and Lemma~\ref{log recip phi sum}; combining this bound with equation~\eqref{pre CS} completes the proof of the upper bound.
\end{proof}

\begin{proof}[Proof of the lower bound in Theorem~\ref{max order In}]
We employ a strategy suggested by Pomerance (private communication). Set $U = \frac15\log x - \log\log x$, define $m = \prod_{p \le U} p$, and let $q$ be the smallest prime that is congruent to $1\mod m$. By Linnik's theorem, with the best current value of Linnik's constant due to Xylouris~\cite{tri11}, we know that $q \ll m^5$. On the other hand, by the prime number theorem,
\[
\log m = \theta(U) = U + O\bigg( \frac U{\log^2 U} \bigg) = \frac15\log x - \log\log x + O\bigg( \frac{\log x}{(\log\log x)^2} \bigg),
\]
which shows that $m=o(x^{1/5})$ and therefore $q<x$ when $x$ is large enough.

Since $m$ divides $q-1$, the prime number theorem also gives
\begin{align*}
\omega(\phi(q)) = \omega(q - 1) \ge \omega(m) = \pi(U) &= \frac U{\log U} + O\bigg( \frac U{\log^2 U} \bigg) \\
&= \frac{\log x}{5\log\log x} + O\bigg( \frac{\log x}{(\log\log x)^2} \bigg).
\end{align*}
The lower bound now follows from inequality $\log I(q) \geq \log 2 \cdot \omega(\phi(q))$, as noted in the proof of Lemma~\ref{log In bounds lemma}.
\end{proof}

Note that the constant $\frac15\log 2$ can be improved to any number less than $\log 2$ if one is willing to assume Montgomery's conjecture on the error term in the prime number theorem for arithmetic progressions (as stated by Friedlander and Granville~\cite[conjecture 1(b)]{FG}). However, even this assumption is not enough to close the gap between the constants in the upper and lower bounds (note that $\log2 \approx 0.69315$ while $\pi\sqrt{2/3} \approx 2.56651$).

\section*{Acknowledgements}

The authors thank Carl Pomerance for helpful conversations concerning some proofs in this paper.
The authors were supported in part by a National Sciences and Engineering Research Council of Canada Discovery Grant.

\bibliographystyle{amsplain}
\bibliography{refs}

\end{document}